\renewcommand{\@biblabel}[1]{\quad#1.}
\newtheorem{thm}{Theorem}
\newtheorem{lem}{Lemma}
\newtheorem{prop}{Proposition}
\newtheorem{defn}{Definition}
\newtheorem{rem}{Remark}
\newtheorem{exmp}{Example}[subsection]
\begin{document}
\title{\textbf{ Topological surgery in cosmic phenomena}}
\author[1]{Stathis Antoniou}
\author[2]{Louis H.Kauffman}
\author[3]{Sofia Lambropoulou}
\affil[1]{\small{School of Applied Mathematical and Physical Sciences, National Technical University of Athens, Greece}, \textit{santoniou@math.ntua.gr}}
\affil[2]{\small{Department of Mathematics, Statistics, and Computer Science, University of Illinois at Chicago, Chicago, USA, Department of Mechanics and Mathematics\\ Novosibirsk State University\\
              Novosibirsk, Russia}, \newline \textit{kauffman@uic.edu}}
\affil[3]{\small{School of Applied Mathematical and Physical Sciences, National Technical University of Athens, Greece}, \textit{sofia@math.ntua.gr}}
\date{}
\maketitle

\section*{Abstract}
We connect topological changes that can occur in $3$-space via surgery, with black hole formation, the formation of wormholes and new generalizations of these phenomena, including relationships between quantum entanglement and wormhole formation. By considering the initial manifold as the $3$-dimensional spatial section of spacetime, we describe the changes of topology occurring in these processes by determining the resulting $3$-manifold and its fundamental group. As these global changes are induced by local processes, we use the local form of Morse functions to provide an algebraic formulation of their temporal evolution and propose a potential energy function which, in some cases, could give rise to the local forces related to surgery. We further show how this topological perspective gives new insight for natural phenomena exhibiting surgery, in all dimensions, while emphasizing the $3$-dimensional case, which describes cosmic phenomena. This work makes new bridges between topology and natural sciences and creates a platform for exploring geometrical physics.

\let\thefootnote\relax\footnotetext{

{\noindent}\textit{2010 Mathematics Subject Classification}: 57M25, 57R65, 83F05.

{\noindent}\textit{Keywords}: topological surgery, topological procedure, 3-space, 3-sphere, 3-manifold, handle, topology change, visualization,  stereographic projection, Poincaré dodecahedral space, knot theory, natural phenomena, natural processes, dynamics, continuous process, reconnection, Morse theory, mathematical model, Falaco solitons, black holes, wormholes, Einstein-Rosen bridge, cosmic string, fundamental group, knot group, quantum gravity, cosmology, ER=EPR, entanglement.}

\section{Introduction} \label{Intro}

This work is intended for both mathematicians and physicists. For the mathematician, it can be seen as a collection of examples where topology is applied to natural sciences and especially cosmology while, for the physicist, it covers a large background which is not easily available and provides a clear and concise toolbox of algebraic topology and Morse theory important for understanding natural processes and cosmic phenomena.

The mathematics discussed here falls within the topics of low-dimensional topology. A basic aspect of this branch is the use of cobordisms of $1$, $2$ and $3$-manifolds to understand topological and geometric structure. Such cobordisms can be factored into elementary cobordisms called surgeries, which are elementary steps of topology change. This work characterizes the manifolds resulting from such topology change, it  describes the dynamics of those elementary steps and it directly connects them with physical processes in dimensions $1$, $2$ and $3$. We focus on the formation of Falaco solitons, black holes and wormholes, but our topological perspective can be applied to any phenomena exhibiting such topological change. 

These mathematical descriptions further explain some of the large-scale structures and dynamics found in cosmology. Namely, we present a relation between cosmic phenomena, surgery and the $ER=EPR$ hypothesis, see~\cite{ER_EPR_,ER_EPR}. This hypothesis, due to L. Susskind and J. Maldacena, suggests that the connectivity of space is itself a quantum phenomena and is related to quantum entanglement. By using the surgery viewpoint in a context of cobordism we view a wormhole as a cobordism from empty space to the union of the event horizons of two black holes. In the context of topological quantum field theory, this cobordism is associated with a linear mapping from the complex numbers to the tensor product of spaces associated with the two black holes. The image of unity in the complex numbers in this tensor product is a candidate for an entangled state associated with the wormhole. In this way we provide a topological/geometric context for the $ER=EPR$ hypothesis. 

Further, we show that our surgery hypothesis describes the creation of a cosmic string black hole which does not end up with a singular $3$-manifold, thus proposing a potential solution to the singularity problem. Our hypothesis suggests that a cosmic string that collapses would result in a surgery that could be described in terms of this string and an associated framing. In this viewpoint the string collapses, giving rise to a singularity in the sense of Morse functions, and then the process continues with a new cosmic string expanding from the singularity and filling out a new manifold. The result is that a new $3$-dimensional space arises that can be described by framed surgery applied to the partially collapsed cosmic string, with the application of this surgery on the other side of the standard observer's event horizon.

The paper is organized as follows: in Section~\ref{definitions} we present the formal definition of topological surgery for an arbitrary dimension. In Section~\ref{MorseContinuity} we describe the process of topological surgery using Morse theory. This description extends the work done in \cite{SS0,SS2,SS3,SS1,SS4} and fits the way surgery is exhibited in nature. In Sections~\ref{1d0Morse} and~\ref{2dMorse}  we analyze the descriptions in dimensions 1 and 2 and examine how they can be applied to natural processes of  these dimensions. Further, in Sections~\ref{3dMorse} and~\ref{3DtopoAll} we present and visualize the $4$-dimensional process of $3$-dimensional surgery, we analyze the topology of the resulting manifolds and we connect this process with the lower dimensional cases using rotation. We then use these topological tools to describe the formation of wormholes and black holes in Section~\ref{3d0WH}, where we also discuss the cosmological implications of our topological perspective.

\newpage
\tableofcontents
\newpage


\section{The process of topological surgery} \label{definitions}
Topological surgery is a mathematical technique introduced by A.H. Wallace~\cite{Wal} and J.W. Milnor~\cite{Milsur} which creates new manifolds out of known ones in a controlled way. It has been used in the study and classification of manifolds of dimension greater than three while also being an important topological tool in lower dimensions.

Its key idea is to perform an operation of cutting and gluing by using the fact that, if $X, Y$ are manifolds with boundary, the boundary of their product space   $X \times Y$ is given by $\partial (X \times Y)=(\partial X \times Y) \cup (X \times \partial Y)$. This property implies that $\partial (D^{n+1}\times D^{m-n})=(S^{n} \times D^{m-n}) \cup (D^{n+1} \times S^{m-n-1})$ where $D^n$ is the $n$-dimensional disc and $S^n$ is the $n$-dimensional sphere. Topological surgery describes the process which removes an embedding of $S^{n} \times D^{m-n}$ (a $(m-n)$-thickening of $S^n$) and glues back $D^{n+1} \times S^{m-n-1}$ (a $(n+1)$-thickening of $S^{m-n-1}$) along the common boundary $S^{n} \times S^{m-n-1}$. More precisely, the well-known definition of surgery is:

\begin{defn} \label{surgery} \rm An \textbf{$m$-dimensional $n$-surgery} is the topological process of creating a new $m$-manifold $M'$ out of a given $m$-manifold $M$ by removing a framed $n$-embedding $h:S^n\times D^{m-n}\hookrightarrow  M$, and replacing it with $D^{n+1}\times S^{m-n-1}$, using the `gluing' homeomorphism $h$ along the common boundary $S^n\times S^{m-n-1}$. Namely, and denoting surgery by  $\chi$:
$$M' = \chi(M) = \overline{M\setminus h(S^n\times D^{m-n})} \cup_{h|_{S^n\times S^{m-n-1}}} (D^{n+1}\times S^{m-n-1}). $$
The resulting manifold $M'$ may or may not be homeomorphic to $M$. Note that from the definition, we must have $n+1 \leq m$. Also, the horizontal bar in the above formula indicates the topological closure of the set underneath.

{\noindent}Further, \rm the \textbf{dual $m$-dimensional $(m-n-1)$-surgery} on $M'$ removes a dual framed $(m-n-1)$-embedding  $g:D^{n+1}\times S^{m-n-1}\hookrightarrow  M'$ such that $g|_{S^n\times S^{m-n-1}}=h^{-1}|_{S^n\times S^{m-n-1}}$, and replaces it with $S^n\times D^{m-n}$, using the `gluing' homeomorphism $g$ (or $h^{-1}$) along the common boundary $S^n\times S^{m-n-1}$. That is:
$$M = \chi^{-1}(M') = \overline{M'\setminus g(D^{n+1}\times S^{m-n-1})} \cup_{h^{-1}|_{S^n\times S^{m-n-1}}} (S^n\times D^{m-n}). $$
\end{defn}

Surgery is a local process in $M$ (exchanging $S^n\times D^{m-n}$ for $D^{n+1}\times S^{m-n-1}$) which induces a global change (the transition of $M$ to $M'$). For example, in dimension 1, for $m=1$ and $n=0$,  the local process of $1$-dimensional $0$-surgery cuts out two segments $S^{0}\times D^{1}$ from $M$ and glues back the other two segments $D^{1}\times S^{0}$, see Fig.~\ref{1DEx}. Note that this local process is independent of the initial manifold $M$ on which the two segments $S^{0}\times D^{1}$ are embedded.

\smallbreak
\begin{figure}[!ht]
\begin{center}
\includegraphics[width=6cm]{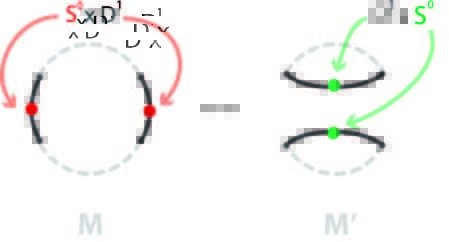}
\caption{$1$-dimensional $0$-surgery}
\label{1DEx}\end{center}
\end{figure}

We will discuss the local process in Section~\ref{local} and the global process in Section~\ref{global}.

\subsection{The local process of surgery} \label{local}
Let us first notice that if we glue together the two $m$-manifolds with boundary  involved in the process of $m$-dimensional $n$-surgery, along their common boundary using the standard mapping $h$, we obtain the $m$-sphere which, in turn, is the boundary of the $(m+1)$-dimensional disc: $(S^n\times D^{m-n}) \cup_{h} (D^{n+1}\times S^{m-n-1}) = (\partial D^{n+1}\times D^{m-n}) \cup_{h} (D^{n+1}\times \partial D^{m-n})  = \partial (D^{n+1}\times D^{m-n})\cong \partial (D^{m+1})=S^{m}$. For example, in dimension 1, $(S^0\times D^{1}) \cup_{h} (D^{1}\times S^{0})=S^{1}$, see Fig.~\ref{HandleExample}~(a).

The $(m+1)$-dimensional disc $D^{m+1} \cong D^{n+1}\times D^{m-n}$ is one dimension higher than the initial manifold $M^{m}$. This extra dimension leaves room for the process of surgery to take place continuously. The disc $D^{m+1}$ considered in its homeomorphic form $D^{n+1}\times D^{m-n}$ is an \textbf{$(m+1)$-dimensional $(n+1)$-handle}. The unique intersection point $D^{n+1} \cap D^{m-n}$ within $D^{n+1}\times D^{m-n}$ is called the \textbf{critical point}. For example, Fig.~\ref{HandleExample}~(b) illustrates the $2$-dimensional $1$-handle $D^1\times D^1$ in which $1$-dimensional $0$-surgery takes place and the corresponding critical point.

The process of surgery is the continuous passage, within the handle $D^{n+1}\times D^{m-n}$, from boundary component $(S^n\times D^{m-n}) \subset  \partial (D^{n+1}\times D^{m-n})$ to its complement $(D^{n+1}\times S^{m-n-1})  \subset  \partial (D^{n+1}\times D^{m-n})$. More precisely, the boundary component $(S^n\times D^{m-n})$ collapse to the critical point $D^{n+1} \cap D^{m-n}$ from which the complement boundary component $(D^{n+1}\times S^{m-n-1})$ emerges. 
 
For the case of $1$-dimensional $0$-surgery, this local process within the handle $D^1\times D^1$ is shown in Fig.~\ref{HandleExample}~(c) where the two segments $S^{0}\times D^{1}$ approach each other, touch at the critical point $D^1 \cap D^1$, where they break, reconnect and become segments $D^{1}\times S^{0}$. 

\smallbreak
\begin{figure}[!ht]
\begin{center}
\includegraphics[width=12cm]{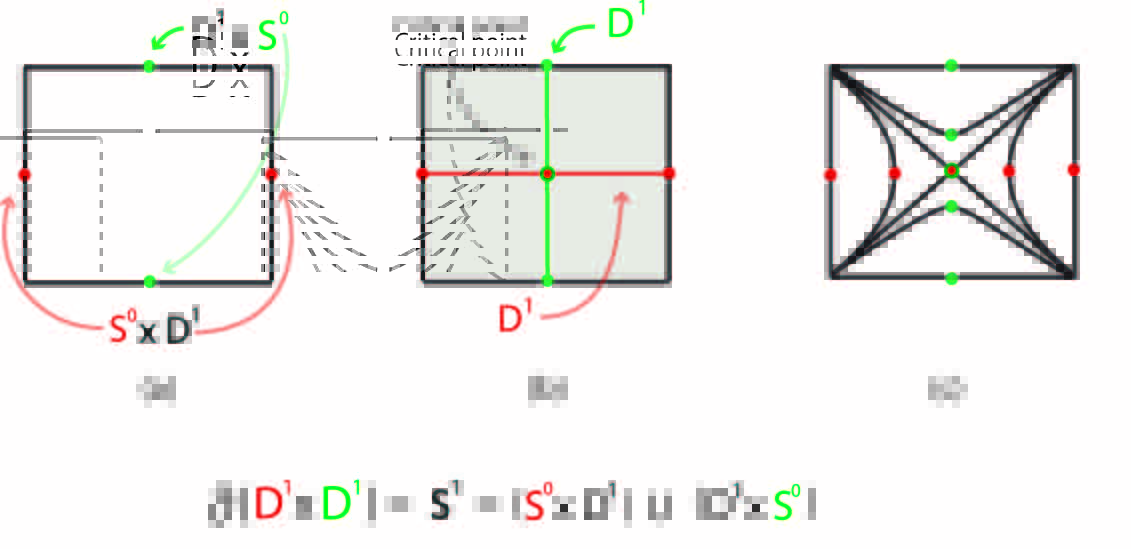}
\caption{The local process of $1$-dimensional $0$-surgery}
\label{HandleExample}\end{center}
\end{figure}

Note that each temporal `slice' of this process is an $m$-dimensional manifold but the evolution of the process requires $m+1$ dimensions in order to be visualized. These local intermediate `slices' will be further analyzed in Section~\ref{MorseContinuity}.

\subsection{The global process of surgery} \label{global}
In order to visualize the global process of surgery which transforms $M$ into $M'$, one also requires $m+1$ dimensions. In fact, surgery on the $m$-manifold $M$ determines a cobordism $(W; M, M')$ called the surgery trace which is made of the temporal `slices' of the global process. More precisely:
 
\begin{defn} \label{cobordism} \rm  An $(m+1)$-dimensional \textbf{cobordism} $(W; M, M')$ is an $(m+1)$-dimensional manifold $W^{m+1}$ with boundary the disjoint union of the closed $m$-manifolds $M,  M'$: $\partial W=M \sqcup M'$. Further, an $(m+1)$-dimensional cobordism $(W; M, M')$ is an $h$-cobordism if the inclusion maps $M \hookrightarrow W$ and $M' \hookrightarrow W$ are homotopy equivalences.
\end{defn}

\begin{defn} \label{trace} \rm  The \textbf{trace} of the surgery removing $S^n\times D^{m-n} \subset  M^{m}$ is the cobordism $(W; M, M')$ obtained by attaching the $(m+1)$-dimensional $(n+1)$-handle $D^{n+1}\times D^{m-n}$ to $ M \times I$ at $S^n\times D^{m-n} \times  {\{1\}} \subset M \times  {\{1\}} $.
\end{defn}

\smallbreak
\begin{figure}[!ht]
\begin{center}
\includegraphics[width=9cm]{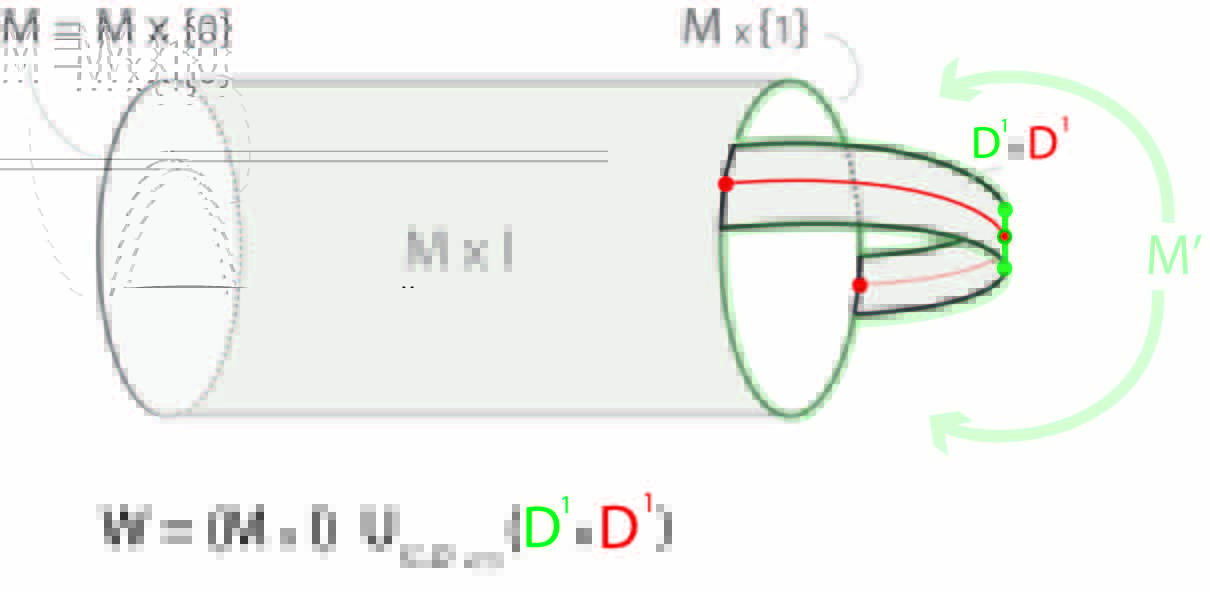}
\caption{The cobordism $(W; M, M')$}
\label{Cobordism}\end{center}
\end{figure}

{\noindent}In fact, two $m$-dimensional manifolds are cobordant if and only if $M'$ can be obtained from $M$ by a finite sequence of surgeries, see \cite{Ra} for details.

The cobordism $(W; M, M')$ of Fig.~\ref{Cobordism} illustrates these definitions for the case of $1$-dimensional $0$-surgery. The local process is part of the global process, hence one can see the handle $D^1\times D^1$ of Fig.~\ref{HandleExample}~(b) in Fig.~\ref{Cobordism}. Further, while not explicitly stated so far, the reader might have already seen from Fig.~\ref{1DEx} that a $1$-dimensional $0$-surgery on $M=S^1$ gives us $M'=S^0 \times S^1$. This is also shown in Fig.~\ref{Cobordism} where we see how the initial manifold $M=S^1 \times  {\{0\}}$ is cobordant with the resulting manifold $M'=S^0 \times S^1 \times  {\{1}\}$, which is shown in Fig.~\ref{Cobordism} in light green. Hence Fig.~\ref{Cobordism} shows $(W; M, M')=(S^1 \times I \cup D^1 \times D^1; S^1, S^0 \times S^1)$.

\smallbreak
\begin{figure}[!ht]
\begin{center}
\includegraphics[width=11cm]{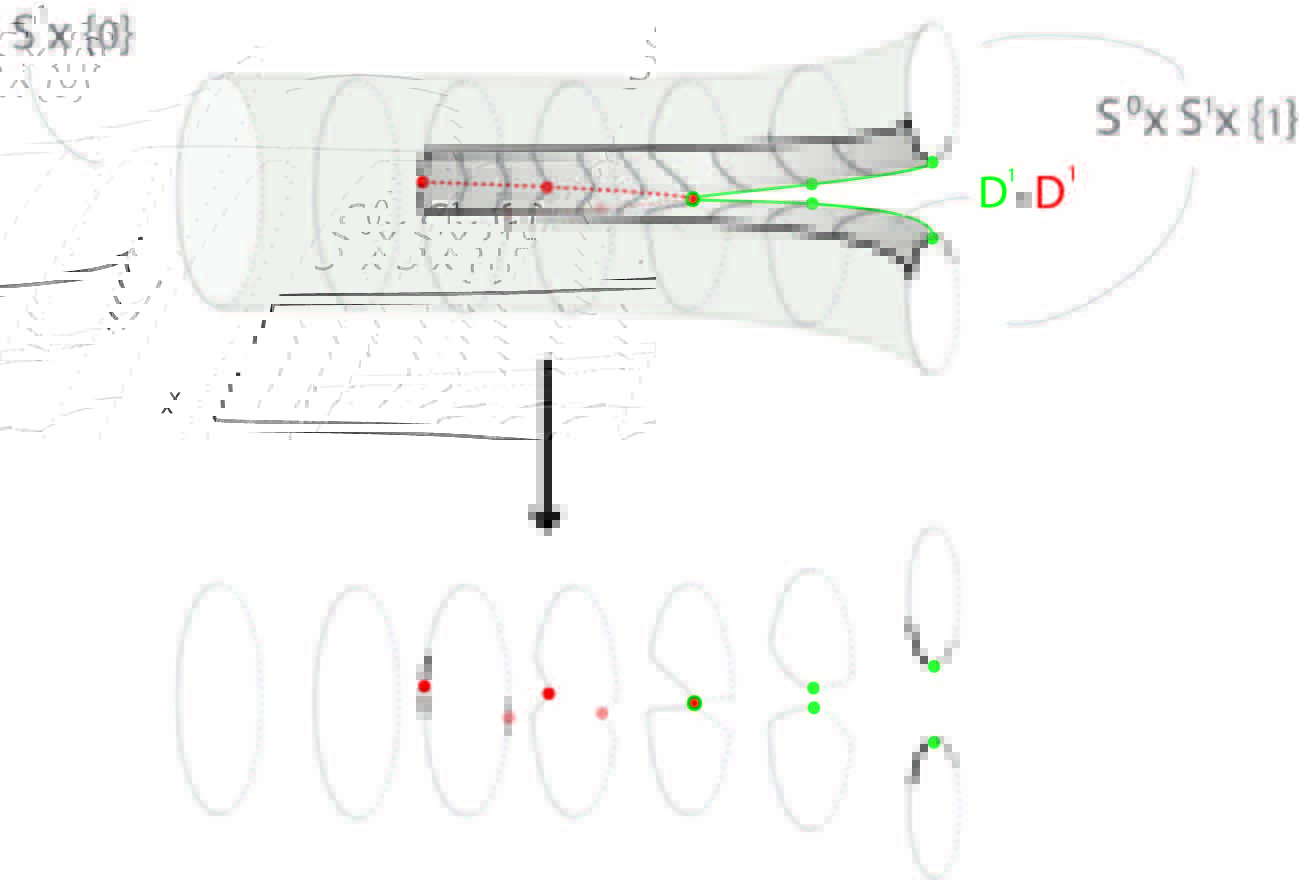}
\caption{The cobordism $(W; S^1, S^0 \times S^1)$ and the process of $1$-dimensional $0$-surgery}
\label{Cobordism2}\end{center}
\end{figure}

However, in order to be able to visualize the temporal `slices' of the global process as perpendicular crossections of the cobordism  of Fig.~\ref{Cobordism}, a homeomorphic representation of $W$ is needed. This is shown in Fig.~\ref{Cobordism2}, where the local process within handle $D^1 \times D^1$ can be seen as part of the the global process of $1$-dimensional $0$-surgery on $S^1$ which, in turn, can be seen as `slices' of $W$.

\section{Morse theory}\label{MorseContinuity}
In this section we will see how Morse theory connects the cobordism of the global process and the $(m+1)$-dimensional $(n+1)$-handle of the local process of surgery. 

\subsection{Definitions}\label{MorsetheoryDef}
We will start by recalling two basic definitions:

\begin{defn} \label{critipoint} \rm  Let $f : M^m \to N^n$ be a differentiable map between two manifolds $M$ and $N$ of dimensions $m$ and $n$ respectively.\\
(i) A \textbf{regular point} of $f$ is a point $x \in M$ where the differential $df(x) : \mathbb{R}^m \to \mathbb{R}^n$ is a linear map of maximal rank, that is, $rank(df (x)) = min(n,m)$.\\
(ii) A \textbf{critical point} of $f$ is a point $x \in M$ which is not regular.\\
(iii) A \textbf{regular value} of $f$ is a point $y \in N$ such that every $x \in f^{−1}(\{y\}) \subseteq M$ is regular (including the empty case $f^{−1}(\{y\}) = \emptyset)$.\\
(iv) A \textbf{critical value} of $f$ is a point $y \in N$ which is not regular. 
\end{defn}

\begin{defn} \label{MorsefuncDef} \rm  Let $f:M^{m} \to \mathbb{R}$ be a differentiable function on an $m$-dimensional manifold.\\
(i) A critical point $x \in M$ of $f$ is \textbf{nondegenerate} if the Hessian matrix $H(x)=(\frac{\partial^2 f}{\partial x_i \partial x_j} )$ is invertible.\\
(ii) The \textbf{index} $Ind(x)$ of a nondegenerate critical point $x \in M$ is the number of negative eigenvalues in $H(x)$, so that with respect to appropriate local coordinates the quadratic term in the Taylor series of $f$ near $x$ is given by 

$$Q(h_1,h_2,...,h_m) = - \sum\limits‎_‎{i=1}^{Ind(x)‎} (h_i)^2 +\sum\limits‎_‎{i=Ind(x)+1}^{m‎} (h_i)^2 \in \mathbb{R}  .$$

\noindent(iii) The function $f$ is \textbf{Morse} if it has only nondegenerate critical points.
\end{defn}

Morse theory studies differentiable manifolds $M$ by considering the critical points of Morse functions  $f:M \to \mathbb{R}$, see \cite{Mil} for details. Among others, Morse theory is used to prove that an $m$-dimensional manifold $M$ can be obtained from $\emptyset$ by successively attaching handles of increasing index $i$:

$$M = \bigcup^{m}_{i=0} (D^{i}\times D^{m-i} \cup D^{i}\times D^{m-i} \cup ... \cup D^{i}\times D^{m-i}) $$ 

\subsection{Connecting Morse theory with the process of surgery}\label{MorsetheoryCon}
The basic connection between Morse theory, cobordisms and topological surgery comes from the following two propositions~\cite{Ra}:

\begin{prop} [\cite{Ra}, Prop. 2.20] \label{MorseCobordisms} \rm  Let $f:W^{m+1} \to I$, where $I$ is the unit interval, be a Morse function on an $(m+1)$-dimensional cobordism $(W; M, M')$ between manifolds $M$ and $M'$ with $$ f^{-1}(\{0\})=M ,\quad f^{-1}(\{1\})=M' $$ and such that all critical points of $f$ are in the interior of $W$.\\
(i) If $f$ has no critical points then $(W; M, M')$ is a trivial $h$-cobordism, with a diffeomorphism $$ (W; M, M') \cong   M \times  (I;{\{0\}},{\{1\}})  $$ which is the identity on $M$.\\
(ii) If $f$ has a single critical point of index $i$ then $W$ is obtained from $ M \times  I$ by attaching an $i$-handle using an embedding $S^{i-1}\times D^{m-i+1} \hookrightarrow  M \times {\{1\}}$, and $(W; M, M')$ is an elementary cobordism of index $i$ with a diffeomorphism $$ (W; M, M') \cong   (M \times  I \cup D^{i}\times D^{m-i+1};M \times {\{0\}},M'). $$
\end{prop}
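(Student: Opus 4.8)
The plan is to reduce this proposition to the standard local normal form for a Morse function near a nondegenerate critical point and then patch it together with the gradient flow on the rest of $W$. In more detail, I would proceed as follows.

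First, for part (i): suppose $f:W^{m+1}\to I$ has no critical points. Then $\nabla f$ (with respect to a fixed Riemannian metric on $W$) is nowhere vanishing, so after rescaling we may assume $\|\nabla f\|\equiv 1$, or at least that $\langle \nabla f, \nabla f\rangle$ is bounded away from $0$. The flow lines of the rescaled gradient vector field $X = \nabla f / \|\nabla f\|^2$ satisfy $\frac{d}{dt} f(\gamma(t)) = 1$, so each flow line starting on $M = f^{-1}(\{0\})$ reaches $M' = f^{-1}(\{1\})$ in unit time, and conversely every point of $W$ lies on exactly one such flow line because $W$ is compact and $\partial W = M \sqcup M'$ (so no flow line can escape through the sides or run forever). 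Sending $(p,t)\in M\times I$ to $\gamma_p(t)$, where $\gamma_p$ is the flow line with $\gamma_p(0)=p$, gives the desired diffeomorphism $M\times(I;\{0\},\{1\})\cong(W;M,M')$ which is the identity on $M$; that the inclusions are homotopy equivalences (the $h$-cobordism condition) is then immediate, since $M\times I$ deformation retracts onto either end.

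For part (ii): let $x_0$ be the unique critical point, of index $i$, with critical value $c=f(x_0)\in(0,1)$. By the Morse Lemma there are coordinates near $x_0$ in which $f = c - \sum_{j=1}^{i} u_j^2 + \sum_{j=i+1}^{m+1} u_j^2$. The strategy is: on $W_{\le c-\varepsilon}=f^{-1}([0,c-\varepsilon])$ there are no critical points, so by part (i) this is diffeomorphic to $M\times[0,c-\varepsilon]$; similarly $f^{-1}([c+\varepsilon,1])\cong M'\times[c+\varepsilon,1]$. The heart of the matter is to show that passing the critical level amounts to attaching one handle, i.e. that $W_{\le c+\varepsilon}$ is diffeomorphic to $W_{\le c-\varepsilon}$ with an $i$-handle $D^i\times D^{m-i+1}$ glued on along an embedding $S^{i-1}\times D^{m-i+1}\hookrightarrow f^{-1}(\{c-\varepsilon\}) = M\times\{c-\varepsilon\}\cong M$. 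Using the quadratic normal form, the descending sphere $S^{i-1}$ (where $\sum u_j^2 = \varepsilon$ in the first $i$ coordinates, the rest zero) together with its normal $D^{m-i+1}$-disc is precisely the attaching region, and one verifies by an explicit isotopy (the standard ``straightening the handle'' argument, e.g. Milnor's handle-attachment computation) that $f^{-1}([c-\varepsilon,c+\varepsilon])$ deformation-retracts in the appropriate handle-theoretic sense onto the lower sublevel set with this handle attached. Combining the three pieces and re-parametrizing the $I$-direction yields the claimed diffeomorphism $(W;M,M')\cong(M\times I\cup D^i\times D^{m-i+1}; M\times\{0\}, M')$, and identifies $(W;M,M')$ as an elementary cobordism of index $i$.

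The main obstacle — and the step that requires real care rather than formal flow arguments — is the handle-attachment analysis across the critical level in part (ii): one must control the gradient flow near $x_0$, where it stagnates (the flow time to cross a neighborhood of a critical point diverges), and show that after modifying $f$ by a suitable bump function supported near $x_0$ one can excise a standard handle so that the complement has no critical points and the flow argument of part (i) applies to it. This is exactly the content of Milnor's Theorems 3.1–3.2 in \emph{Morse Theory}, and I would either reproduce that computation in the quadratic chart or, since Proposition~\ref{MorseCobordisms} is quoted from \cite{Ra}, simply cite it; the remaining pieces (the no-critical-point flow, the decomposition of $W$ into three sublevel slabs, the reparametrization of $I$) are routine once the handle lemma is in hand.
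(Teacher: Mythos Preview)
Your proposal is correct and follows exactly the standard gradient-flow and Morse-Lemma handle-attachment argument from Milnor's \emph{Morse Theory}. The paper itself does not give a proof of this proposition at all: it simply states that ``the proof of Proposition~\ref{MorseCobordisms} (i) and (ii) can be found in \cite{Mil} and \cite{Ra} respectively,'' so your sketch is in fact \emph{more} than what the paper provides, and is precisely the argument one finds in those references.
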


\begin{prop} [\cite{Ra}, Prop. 2.21] \label{MorseCobordisms2} \rm If an $(m+1)$-dimensional manifold with boundary $(W,\partial W)$ is obtained from $(W_0,\partial W_0)$ by attaching an $i$-handle $$ W = W_0 \cup_{S^{i-1}\times D^{m-i+1}} D^{i}\times D^{m-i+1} $$ then $\partial W$ is obtained from $\partial W_0$ by an $m$-dimensional $(i-1)$-surgery $$ \partial W =  \overline{\partial W_0\setminus (S^{i-1}\times D^{m-i+1})} \cup_{{S^{i-1}\times S^{m-i}}} D^{i}\times S^{m-i}. $$
\end{prop}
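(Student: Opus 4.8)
The plan is to derive the statement directly from the product-boundary identity $\partial(X\times Y)=(\partial X\times Y)\cup(X\times\partial Y)$ already used in Section~\ref{definitions}, applied to the handle $D^{i}\times D^{m-i+1}$, and then to recognize the resulting description of $\partial W$ as precisely an $m$-dimensional $(i-1)$-surgery in the sense of Definition~\ref{surgery}. Throughout I set $n:=i-1$, so that $m-n=m-i+1$, $n+1=i$ and $m-n-1=m-i$, matching the exponents in the displayed formulas.

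First I would write $H:=D^{i}\times D^{m-i+1}$ and decompose its boundary as
$$\partial H=(S^{i-1}\times D^{m-i+1})\cup_{S^{i-1}\times S^{m-i}}(D^{i}\times S^{m-i}),$$
so that $\partial H$ is the union of the \emph{attaching region} $A:=S^{i-1}\times D^{m-i+1}=S^{n}\times D^{m-n}$ and the \emph{free part} $B:=D^{i}\times S^{m-i}=D^{n+1}\times S^{m-n-1}$, glued along the corner locus $\partial A=\partial B=S^{i-1}\times S^{m-i}$. By hypothesis $W=W_{0}\cup_{\phi}H$ for an attaching embedding $\phi\colon A\hookrightarrow\partial W_{0}$, so every point of $W$ lies either in $W_{0}$ or in $H$, identified along $A$ via $\phi$.

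Next I would carry out the local point-set analysis of which points of $W$ lie on $\partial W$. Points in the interior of $W_{0}$ away from $\phi(A)$, and points in the interior of $H$, remain interior; points of $\partial W_{0}$ lying in $\overline{\partial W_{0}\setminus\phi(A)}$ stay boundary points; points of $\phi(\operatorname{int}A)$ become interior points of $W$, because a collar of $\phi(A)$ inside $W_{0}$ together with a collar of $A$ inside $H$ now assemble into a full $(m+1)$-ball neighbourhood; and the free part $B\subset\partial H$ contributes the new boundary. Collecting these, $\partial W=\overline{\partial W_{0}\setminus\phi(A)}\cup_{\phi|_{S^{i-1}\times S^{m-i}}}B$, and comparing with Definition~\ref{surgery} with $n=i-1$ this is exactly the manifold $\chi(\partial W_{0})$ obtained by $m$-dimensional $(i-1)$-surgery on $\partial W_{0}$ along $\phi$, which is the claim.

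The main obstacle is the behaviour at the corner locus $S^{i-1}\times S^{m-i}$, where $\overline{\partial W_{0}\setminus\phi(A)}$ meets $B$: \emph{a priori} $W$ is only a manifold with corners there, so one must invoke the standard corner-straightening (equivalently, the compatibility and essential uniqueness of collar neighbourhoods) to conclude that $\partial W$ is genuinely a smooth closed $m$-manifold and that the gluing is by $\phi|_{S^{i-1}\times S^{m-i}}$ as asserted. Once that technical point is granted, everything else is the bookkeeping of the product-boundary formula, and I would keep the exponent arithmetic explicit to make the identification with Definition~\ref{surgery} transparent.
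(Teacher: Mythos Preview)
Your argument is correct and is the standard one: decompose $\partial(D^{i}\times D^{m-i+1})$ into the attaching region $A=S^{i-1}\times D^{m-i+1}$ and the free face $B=D^{i}\times S^{m-i}$, observe that attaching along $A$ buries the interior of $\phi(A)$ in the interior of $W$ while $B$ becomes new boundary, and read off the surgery formula. Your attention to corner-straightening at $S^{i-1}\times S^{m-i}$ is appropriate and is exactly the technical point one needs to mention.

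Note, however, that the paper does not actually give its own proof of this proposition: immediately after stating it, the authors write that ``for Proposition~\ref{MorseCobordisms2} the reader is referred to \cite{Mil}.'' So there is no in-paper argument to compare against; your proof is essentially the classical one found in Milnor or Ranicki, and is precisely what the cited references would supply.
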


The proof of Proposition~\ref{MorseCobordisms} (i) and (ii) can be found in \cite{Mil} and \cite{Ra} respectively, while for Proposition~\ref{MorseCobordisms2} the reader is referred to \cite{Mil}. For example, in the case of $1$-dimensional $0$-surgery, since $m=1$ and $n=0$, the single critical point of index $i=1$ mentioned in Proposition~\ref{MorseCobordisms}~(ii) is $D^1 \cap D^1$ which is in the interior of the handle $D^{1}\times D^{1}$, recall Fig.~\ref{HandleExample}~(b). The corresponding cobordism $W$ referred to in Proposition~\ref{MorseCobordisms2} and shown in Figs.~\ref{Cobordism} and ~\ref{Cobordism2}, is obtained by attaching the handle $D^{1}\times D^{1}$ to $W_0=M \times I = S^1 \times I$ while $\partial W= S^1 \sqcup S^0 \times S^1$ is obtained by a $1$-dimensional $0$-surgery on $\partial W_0 = S^1 \sqcup S^1$.
 
We will now present a theorem and a lemma from~\cite{Ra} which will be used to study the temporal evolution of topological surgery in the following sections. 

\begin{thm} [\cite{Ra}, Thm 2.14] \label{EveryMorse} \rm Every $m$-dimensional manifold $M^m$ admits a Morse function $f:M \to \mathbb{R}$. 
\end{thm}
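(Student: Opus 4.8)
The plan is to construct a Morse function on an arbitrary manifold by a partition-of-unity argument, starting from the fact that $M$ can be embedded in Euclidean space and that ``distance-squared from a generic point'' functions are Morse. First I would invoke the Whitney embedding theorem to realize $M^m$ as a smooth submanifold of some $\mathbb{R}^N$. For a point $p \in \mathbb{R}^N$, consider $f_p : M \to \mathbb{R}$ given by $f_p(x) = \|x - p\|^2$. A standard computation (differentiating along $M$) shows that $x$ is a critical point of $f_p$ precisely when $x - p$ is normal to $M$ at $x$, and that the critical point is degenerate exactly when $p$ is a focal point of $M$ along that normal line. The key input is Sard's theorem: the set of focal points has measure zero in $\mathbb{R}^N$, so for almost every $p$ the function $f_p$ has only nondegenerate critical points, i.e.\ is Morse.

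That argument already handles the compact case, since a continuous function on a compact manifold automatically has critical points that are well-behaved and $f_p$ is bounded; one then just picks a good $p$. For the noncompact case I would either (a) note that the statement as used in this paper is only ever applied to compact cobordisms and compact manifolds, so the compact version suffices, or (b) give the full argument: cover $M$ by countably many charts with compact closures, build local Morse functions via the distance-squared trick on each, and patch them with a partition of unity $\{\rho_i\}$, forming $f = \sum_i \rho_i f_{p_i} + (\text{a proper exhausting function})$ and then perturbing slightly so that the sum is still Morse. The perturbation step again rests on Sard/transversality: the space of Morse functions is dense (indeed open and dense) in $C^\infty(M,\mathbb{R})$ in the appropriate topology, so an arbitrarily small perturbation of any smooth function is Morse.

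The key steps in order are thus: (1) embed $M \hookrightarrow \mathbb{R}^N$ via Whitney; (2) introduce the family $f_p(x) = \|x-p\|^2$ and compute its critical points and Hessian in terms of the second fundamental form of $M$; (3) identify degeneracy with focal points and apply Sard's theorem to conclude that Morse functions of this type exist for generic $p$; (4) in the noncompact case, localize, patch with a partition of unity against a proper function, and re-apply genericity to restore the Morse condition after patching.

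The main obstacle I expect is step (4), the noncompact patching: adding up the local $f_{p_i}$ can destroy nondegeneracy at points lying in the overlap of several charts, and one must control this while keeping the function proper so that sublevel sets remain compact (needed for the handle-attachment machinery of Propositions~\ref{MorseCobordisms} and~\ref{MorseCobordisms2}). The clean resolution is to phase the genericity argument correctly — first secure a proper smooth $f$, then perturb within a residual set to make it Morse — rather than trying to make each summand do double duty. Since the present paper only applies Theorem~\ref{EveryMorse} to compact manifolds and compact cobordisms, I would present the compact argument in full and merely remark that the general case follows from the standard density of Morse functions, citing \cite{Mil}.
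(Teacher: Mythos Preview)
The paper does not give its own proof of this theorem: it simply states ``See \cite{Mil} for the proof'' and moves on. Your proposal is precisely the classical argument found in Milnor (Whitney embedding, distance-squared functions $f_p$, identification of degenerate critical points with focal points, and Sard's theorem), so you are supplying exactly what the citation points to rather than diverging from it. Your handling of the noncompact case is more than the paper requires, since, as you correctly note, the applications here are all to compact cobordisms; presenting the compact argument and deferring the general case to the density statement in \cite{Mil} is entirely appropriate.
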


{\noindent}See \cite{Mil} for the proof.

\begin{lem} [\cite{Ra}, Lemma 2.19]  \label{LocalMorse} \rm For any $0 \leq i \leq m+1$ the Morse function
$$ f:D^{m+1} \to \mathbb{R}; \quad (x_1,x_2,...,x_{m+1}) \mapsto -‎‎\sum\limits‎_‎{j=1}^{i‎} x^2_{j} +‎‎\sum\limits‎_‎{j=i+1}^{m+1‎} x^2_{j}   $$
{\noindent}has a unique interior point $0 \in D^{m+1}$, which is of index $i$. The $(m+1)$-dimensional manifolds with boundary, defined for $0 < \epsilon < 1$ by
$$ W_{- \epsilon}=f^{-1}(-\infty, -\epsilon],\quad   W_{\epsilon}=f^{-1}(-\infty,\epsilon] 	$$
{\noindent}are such that  $W_{\epsilon}$ is obtained from  $W_{- \epsilon}$ by attaching an $i$-handle: $$ W_{\epsilon}=   W_{-\epsilon}  \cup D^{i}\times D^{m-i+1} .	$$
\end{lem}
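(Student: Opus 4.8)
The plan is to analyze the explicit quadratic Morse function $f(x) = -\sum_{j=1}^i x_j^2 + \sum_{j=i+1}^{m+1} x_j^2$ on $D^{m+1}$ directly by hand, since everything is completely explicit. First I would verify the claim about the critical point: computing $\nabla f$ shows it vanishes only at the origin, and the Hessian is the diagonal matrix $\mathrm{diag}(-2,\dots,-2,2,\dots,2)$ with $i$ negative entries, which is invertible, so $0$ is a nondegenerate critical point of index $i$ in the sense of Definition~\ref{MorsefuncDef}; it lies in the interior of $D^{m+1}$ since $f(0)=0$ and $0$ is not on the boundary sphere. This also confirms $f$ is Morse (its only critical point is nondegenerate).

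Next I would split coordinates as $x = (u,v)$ with $u=(x_1,\dots,x_i)\in\mathbb{R}^i$ and $v=(x_{i+1},\dots,x_{m+1})\in\mathbb{R}^{m-i+1}$, so that $f(u,v) = |v|^2 - |u|^2$. The sublevel sets are then $W_{-\epsilon} = \{ |u|^2 \ge |v|^2 + \epsilon \}\cap D^{m+1}$ and $W_{\epsilon} = \{ |u|^2 \ge |v|^2 - \epsilon \}\cap D^{m+1}$. The key geometric observation is that passing from $W_{-\epsilon}$ to $W_{\epsilon}$ adds exactly the region $\{ |v|^2-\epsilon < |u|^2 < |v|^2 + \epsilon\}$ (intersected with the disc), and this region is attached to $W_{-\epsilon}$ along the part of its boundary lying on the level set $f^{-1}(-\epsilon)$, namely $\{|u|^2 = |v|^2+\epsilon\}$. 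I would then exhibit an explicit diffeomorphism identifying the added region with $D^i\times D^{m-i+1}$ in such a way that the gluing locus becomes $S^{i-1}\times D^{m-i+1}$; concretely, one rescales so that the core disc $D^i$ corresponds to the $u$-directions (where $f\le 0$, i.e. the "descending" directions) and the cocore $D^{m-i+1}$ to the $v$-directions, using a radial reparametrization to straighten the hyperboloid boundaries $|u|^2 - |v|^2 = \pm\epsilon$ into the boundary faces $S^{i-1}\times D^{m-i+1}$ and $D^i \times S^{m-i}$ of the handle. Since the statement only claims a homeomorphism/attachment up to the standard handle model, it suffices to build this map piecewise and check it is a homeomorphism onto its image matching boundaries correctly; one must also truncate against $\partial D^{m+1}$, but for small $\epsilon$ the relevant part of the handle sits in the interior away from where the sphere interferes with the attaching region, so the truncation only affects the far ("top") part of the cocore and can be absorbed into the diffeomorphism.

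The main obstacle I expect is making the attaching-region-to-standard-handle diffeomorphism genuinely rigorous rather than merely plausible: one has to choose coordinates on the region between the two hyperboloids $\{|u|^2 - |v|^2 = -\epsilon\}$ and $\{|u|^2 - |v|^2 = \epsilon\}$ that simultaneously (a) are smooth up to and across the critical fiber $|u|=|v|$ where the gradient flow degenerates, (b) send the two hyperboloidal boundary pieces to $S^{i-1}\times D^{m-i+1}$ and $D^i\times S^{m-i}$ respectively, and (c) interact correctly with the cutoff by $\partial D^{m+1}$. The clean way is the standard trick from Milnor's \emph{Morse Theory} and \emph{Lectures on the h-cobordism theorem}: introduce the function $|u|^2 + |v|^2$ as an auxiliary "radial" coordinate transverse to the level sets of $f$, use the gradient-like vector field for $f$ to flow, and define the handle as a neighborhood of the descending disc $\{v=0,\ |u|\le\delta\}$. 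Given that Proposition~\ref{MorseCobordisms} and Proposition~\ref{MorseCobordisms2} are already available and this lemma is essentially their local model, I would actually prefer to deduce the statement from Proposition~\ref{MorseCobordisms}(ii): $f$ restricted to a small disc has a single critical point of index $i$ in the interior, so $W_\epsilon$ is obtained from $W_{-\epsilon}$ by attaching an $i$-handle along an embedding $S^{i-1}\times D^{m-i+1}\hookrightarrow \partial W_{-\epsilon}$, which is exactly the asserted $W_\epsilon = W_{-\epsilon}\cup D^i\times D^{m-i+1}$ — reducing the proof to the bookkeeping of identifying the manifold pair $(W_{-\epsilon},W_\epsilon)$ with an elementary cobordism for the chosen $f$, plus a remark that the boundary sphere causes no trouble for small $\epsilon$.
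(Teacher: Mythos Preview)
The paper does not actually prove this lemma: it is quoted verbatim as \cite{Ra}, Lemma~2.19, and the text following it only illustrates the case $m=i=1$ with Fig.~\ref{Morse1Example} rather than giving an argument. So there is no in-paper proof to compare your proposal against; your direct analysis (computing $\nabla f$ and the Hessian, splitting $x=(u,v)$ so that $f=|v|^2-|u|^2$, and identifying the region between the two hyperboloids with a standard $i$-handle via the Milnor-style radial/flow argument) is exactly the standard proof one finds in Ranicki and in Milnor's \emph{Lectures on the $h$-cobordism theorem}, and is correct as sketched.

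One genuine issue with your alternative route, however: you propose at the end to \emph{deduce} the lemma from Proposition~\ref{MorseCobordisms}(ii). In Ranicki's development the order is the reverse --- the local model of Lemma~2.19 is what feeds into the proof of Proposition~2.20(ii) (one reduces to a neighborhood of the critical point where $f$ has the standard quadratic form and then invokes exactly this handle identification). So appealing to Proposition~\ref{MorseCobordisms}(ii) here would be circular. Your first, hands-on approach is the right one; the explicit diffeomorphism you describe (using $|u|^2+|v|^2$ as a transverse coordinate and straightening the hyperboloid boundaries) is precisely what is needed, and the truncation by $\partial D^{m+1}$ is indeed harmless for small $\epsilon$ as you note.
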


\smallbreak
\begin{figure}[!ht]
\begin{center}
\includegraphics[width=8cm]{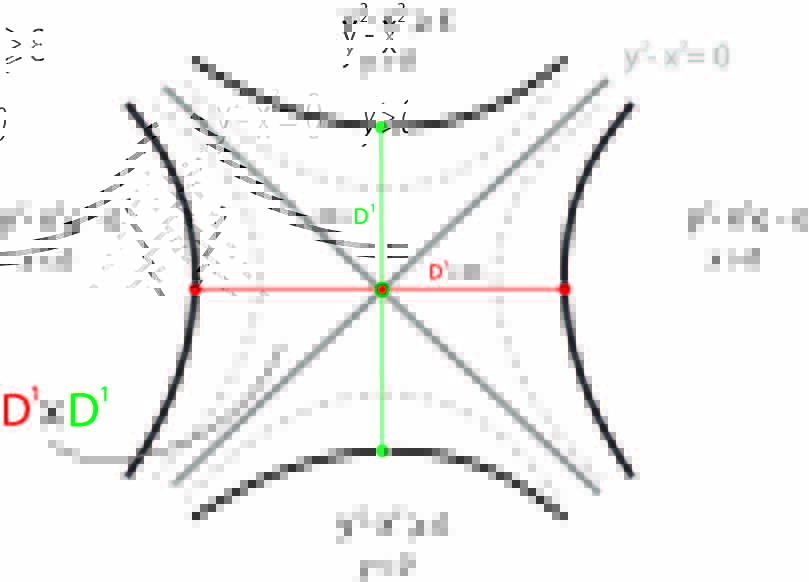}
\caption{The local form of a Morse function for $m=i=1$}
\label{Morse1Example}\end{center}
\end{figure}

For example, the case $m=i=1$ of the lemma is shown in Fig.~\ref{Morse1Example} where: $$ f:D^{2} \to \mathbb{R}; \quad (x,y) \mapsto -x^2+y^2     $$
$$W_{-\epsilon}= {\{(x,y) \in D^2 \mid f(x,y) \leq -\epsilon \}},$$
$$D^{1}\times D^{1}= {\{   (x,y) \in D^2 \mid -\epsilon \leq  f(x,y) \leq \epsilon\}}, \quad  \text{the attached handle},$$
$$W_{\epsilon} = {\{   (x,y) \in D^2 \mid  f(x,y) \leq \epsilon\}} = W_{-\epsilon}  \cup D^{1}\times D^{1}.$$

\smallbreak
Lemma~\ref{LocalMorse} connects Morse functions with both the cobordism of the global process and the handle of the local process. Moreover, the local process of $m$-dimensional $(i-1)$-surgery within the $(m+1)$-dimensional handle, recall Fig.~\ref{HandleExample}~(c), can be parametrized by $\epsilon$. Indeed, comparing  Fig.~\ref{Morse1Example} with Fig.~\ref{HandleExample}~(c), the values $\epsilon<0$ correspond to the two segments $S^0 \times D^1$ approaching each other, $\epsilon=0$ corresponds to the straighten segments which intersect at the critical point $D^1 \cap D^1$, while the values $\epsilon>0$ correspond to the reconnected segments $D^1 \times S^0$.

\section{Local dynamics of $1$-dimensional surgery }\label{1d0Morse}
In this section, we will see how the local form of a Morse function can be used to describe the temporal evolution of natural phenomena exhibiting $1$-dimensional $0$-surgery. Moreover, for phenomena exhibiting this type of surgery, we propose the negative gradient of the local form of a Morse function as a potential energy function giving rise to the local forces related to surgery.

\subsection{Temporal evolution}\label{Time1D}
As mentioned in the end of Section~\ref{MorsetheoryCon}, the Morse function for the case $m=i=1$ of Lemma~\ref{LocalMorse} is: $f(x,y)=-x^2+y^2$. Its plotting is shown in Fig.~\ref{Temp1Example}~(1). Now, parameter $\epsilon$ can be considered as time so we shall denote it by $t$. So, we can describe the process of surgery by varying parameter $t$ of the level curves $-x^2+y^2=t$, illustrated in Fig.~\ref{Temp1Example}~(2), thus providing a continuous analogue of the process illustrated in Fig.~\ref{Morse1Example}. For $-1<t<0$, these hyperbolas are shaded in red. As $t$ gets close to $0$, the two branches of the hyperbolas get close to one another and their color whitens. At $t=0$ the degenerated hyperbola $-x^2+y^2 =0$ consist in two straight white segments along which the reconnection takes place. Finally, as $t$ starts taking positive values in the range $0<t<1$, the two new branches of the hyperbolas start turning to green. 

\smallbreak
\begin{figure}[!ht]
\begin{center}
\includegraphics[width=10cm]{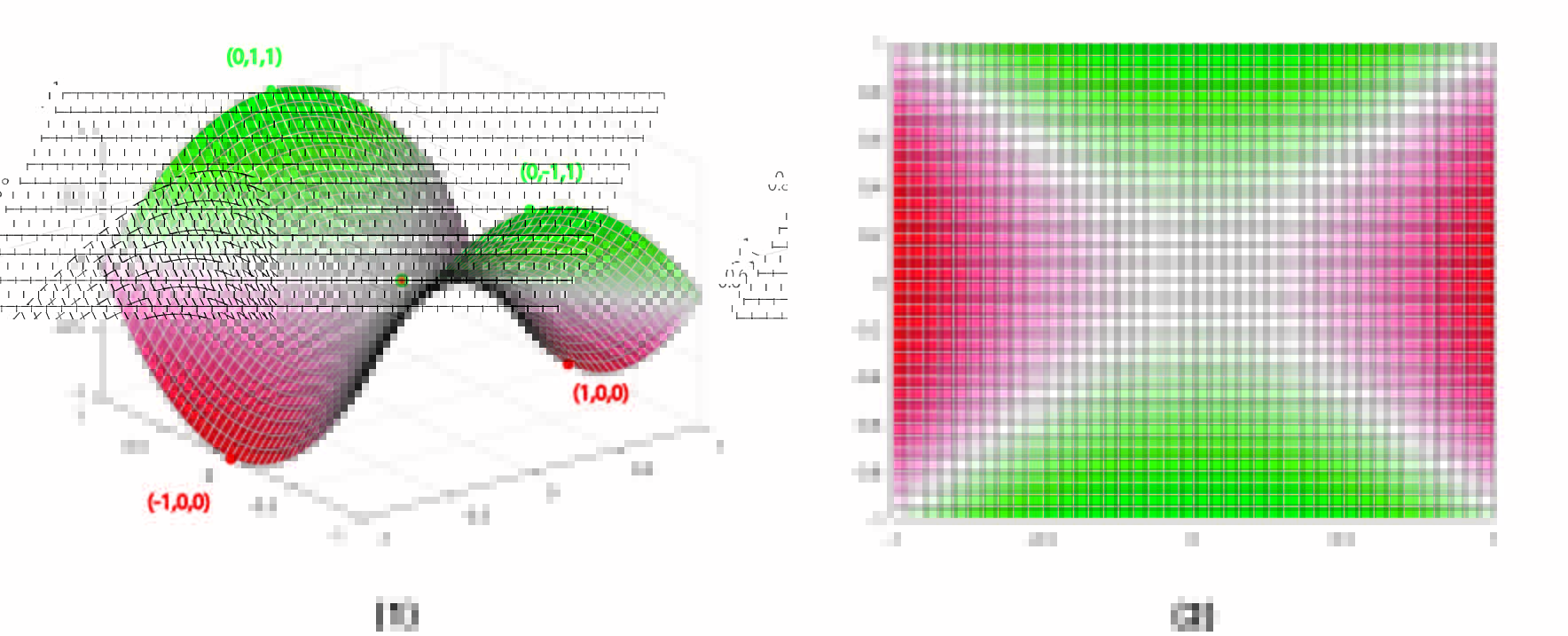}
\caption{\textbf{(1)} $f(x,y)=-x^2+y^2$   \textbf{(2)} The level curves $-x^2+y^2=t; -1<t<1$   }
\label{Temp1Example}\end{center}
\end{figure}

\subsection{Gradient description}\label{Gradient1D}
The gradient vector field $\nabla f=(-2x,2y)$, which is perpendicular to the level curves $-x^2+y^2=t$ and points in the direction of the greatest rate of increase of $f$, is shown in Fig.~\ref{TS_CP_1D_Gradient}. The flow of $S^0$ which is composed of the two red points $(-1,0) \sqcup (1,0)$ in  Fig.~\ref{TS_CP_1D_Gradient}, follows the red vectors along the $x$-axis towards the critical point $(0,0)$. After collapsing to the critical point, two new green points emerge, following the green vectors along the $y$-axis towards the $S^0$ composed of the green points $(0,1) \sqcup (0,-1)$. In other words, the process of $1$-dimensional $0$-surgery can be viewed as the collapsing of the core $S^0$ of segments $S^0 \times D^1$ to the critical point from which the core $S^0$ of segments $D^1 \times S^0$ uncollapses. The two directions followed by the cores are the two perpendicular segments $D^1$ that make up the $2$-dimensional $1$-handle $D^1\times D^1$. These segments were shown in red and green in Fig.~\ref{Morse1Example} and the same color coding has been used to show the vectors acting along them in Fig.~\ref{TS_CP_1D_Gradient}.
 
Moreover, the gradient is closely related to the notion of force. For example, an object starting from a high place (thus having high potential energy) and rolling down to a lower place (of lower potential energy) under the influence of gravity will follow the exact opposite direction of the gradient vectors. Looking at Fig.~\ref{Temp1Example}~(1) and letting two small objects fall from the two highest points $(0,1,1)$ and $(0,-1,1)$, these objects will meet at $(0,0,0)$ and fall down to the two lowest points $(1,0,0)$ and $(-1,0,0)$. Their path projected in $2$-dimensions corresponds to the time-reversed process of Fig.~\ref{TS_CP_1D_Gradient}: two green points $S^0$ collapsing to the critical point from which the two red points $S^0$ emerge. We can think of the Morse function as describing the height (hence related to the potential energy) and of the objects as rolling down the hills described by the Morse function. The gravitational force and the motion of the objects are both in the direction of the negative gradient of the Morse function, perpendicular to its level curves.   

More generally, if the forces acting on a particle are conservative, they are derivable from a scalar potential energy function $V$ as $\vec{F}=-(\nabla V)$. Hence, for phenomena exhibiting such surgery, one can take the local form of the corresponding Morse function multiplied by $-1$: $(-1)*f$ as a potential energy function giving rise to the local forces related to surgery.

\smallbreak
\begin{figure}[!ht]
\begin{center}
\includegraphics[width=7cm]{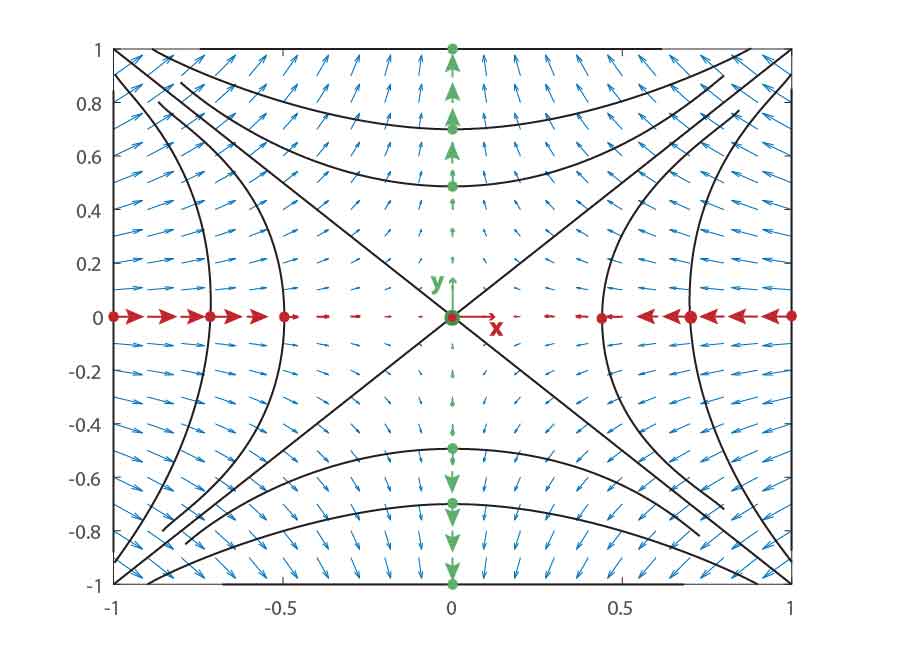}
\caption{The gradient $  \nabla f=(-2x,2y)$}
\label{TS_CP_1D_Gradient}\end{center}
\end{figure}

\subsection{$1$-dimensional phenomena}\label{NatPhe1D}
The above analysis provides a way to describe natural phenomena exhibiting $1$-dimensional $0$-surgery. Such phenomena occur in both micro and macro scales. It can be seen for example during magnetic reconnection (the phenomenon whereby cosmic magnetic field lines from different magnetic domains are spliced to one another,  changing their pattern of conductivity with respect to the sources), during meiosis (when new combinations of genes are produced) and in site-specific DNA recombination (whereby nature alters the genetic code of an organism). These phenomena and their relation to topological surgery have been detailed in \cite{SS0} where we pin down the forces that are present in each process. 
 
Note that this analysis also gives us an algebraic description of the process. More precisely, we can now use equation $-x^2+y^2=t, -1<t<1$, to describe the continuous way the $1$-dimensional splicing and reconnection occurs. Moreover, it generalizes the notion of forces to the negative gradient of the local form of the corresponding Morse function. As a result, if we view the gradient vectors of Fig.~\ref{TS_CP_1D_Gradient} as forces, these act not only on the cores $S^0$  but on the whole segments $S^0 \times D^1$ and $D^1 \times S^0$. Moreover, while the collapse of the core $S^0$ of the initial segments $S^0 \times D^1$ is the effect of attracting forces, we now pin down that the uncollapsing of the core $S^0$ of the final segments $D^1 \times S^0$ is the result of repelling forces. Note that we will keep this color coding throughout the paper. Namely vectors exhibiting attraction and repulsion will be shown in red and green, respectively.

\section{Local dynamics of $2$-dimensional surgery}\label{2dMorse}
In this section, we will see how the local form of a Morse function can be used to describe natural phenomena exhibiting $2$-dimensional surgery. Moreover, for phenomena exhibiting this type of surgery, we propose the negative gradient of the local form of a Morse function as a potential energy function giving rise to the local forces related to surgery.

\subsection{Types of $2$-dimensional surgery}\label{Types2D}
From Definition~\ref{surgery}, we know that there are two types of $2$-dimensional surgery. Namely, starting with a $2$-manifold $M$, one can have $m=2$ and $n=0$ or $m=2$ and $n=1$. The first possibility is the \textit{ $2$-dimensional $0$-surgery} which removes two discs $S^0\times D^2$ from $M$ and replaces them by a cylinder $D^1\times S^1$. This cylinder gets attached along the common boundary $S^0\times S^1$ comprising two copies of $S^1$. For example, if $M=S^2$ the above operation changes its homeomorphism type from the 2-sphere to that of the torus, see Fig.~\ref{2Dex}~(a). The other possibility is the \textit{$2$-dimensional $1$-surgery} where a cylinder (or equivalently an annulus) $S^1 \times D^1$ is removed from $M$ and is replaced by two discs  $D^2 \times S^0$ attached along the common boundary $S^1 \times S^0$. For example, if $M=S^2$ the result is two copies of  $S^2$,  see Fig.~\ref{2Dex}~(b). 

Note now that from Definition~\ref{surgery}, a dual $2$-dimensional $0$-surgery is a $2$-dimensional $1$-surgery and vice versa. Hence,  Fig.~\ref{2Dex}~(a) shows that a $2$-dimensional $0$-surgery on a sphere is the reverse process of a $2$-dimensional $1$-surgery on a torus, while Fig.~\ref{2Dex}~(b) shows that $2$-dimensional $1$-surgery on a sphere is the reverse process of a $2$-dimensional $0$-surgery on two spheres. In the figure, the symbol $\longleftrightarrow $ indicates surgeries from left to right and their corresponding dual surgeries from right to left. 

\smallbreak
\begin{figure}[!ht]
\begin{center}
\includegraphics[width=10cm]{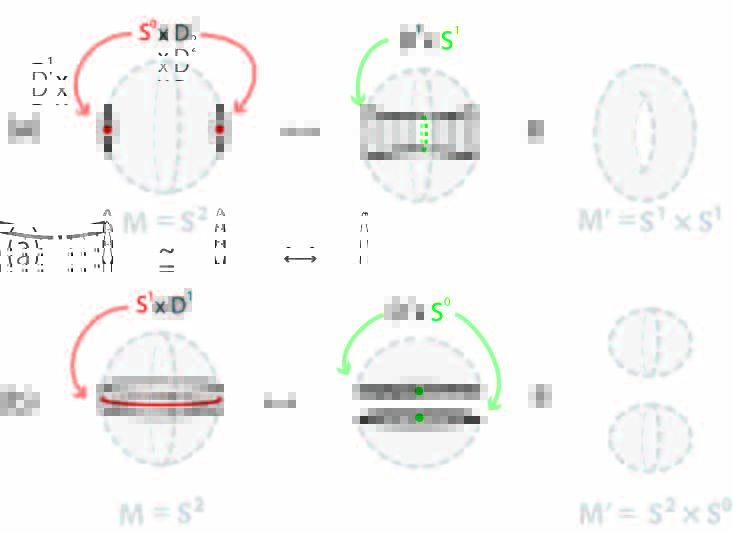}
\caption{ \textbf{(a)} 2-dimensional 0-surgery on $M=S^2$ \textbf{(b)} 2-dimensional 1-surgery on $M=S^2$.}
\label{2Dex}\end{center}
\end{figure}

\subsection{Temporal evolution}\label{Time2D}
Consider now the Morse function of Lemma~\ref{LocalMorse} for the case $m=2$ and $i=1$, namely: $$g:D^{3} \to \mathbb{R};\quad  (x,y,z) \mapsto -x^2+y^2+z^2$$ Applying the line of thought presented in Section~\ref{1d0Morse} one dimension higher, the local process of $2$-dimensional $0$-surgery happens inside handle $D^1\times D^2$ and can be described by varying parameter $t$ of the level surfaces $-x^2+y^2+z^2=t$. For $-1<t<0$, these are two-sheet hyperboloids.  In Fig.~\ref{TS_CP_2D_Gradient_Black_2}, one of these two-sheets hyperboloids is shown intersecting with the $x$-axis at the two antipodal red points. As $t$ gets close to $0$, the two-sheets of the hyperboloids get close to one another. At $t=0$ the two sheets merge and become the conical surface $-x^2+y^2+z^2=0$ centered at $(0,0,0)$, see the red/green point of Fig.~\ref{TS_CP_2D_Gradient_Black_2}, from which, as $t$ takes positive values in the range $0<t<1$, the new one-sheet hyperboloids emerge. One of these one-sheet hyperboloids is shown in Fig.~\ref{TS_CP_2D_Gradient_Black_2} where its intersection with the $(y,z)$-plane is the circle shown in green. Similarly, for $2$-dimensional $1$-surgery, one could consider the Morse function of Lemma~\ref{LocalMorse} for the case $m=2$ and $i=2$. However, one can simply reverse the time of the Morse function of $2$-dimensional $0$-surgery, $g(x,y,z)=-x^2+y^2+z^2$, to obtain the level surfaces 
$-x^2+y^2+z^2=(-t)$, which describe the local process of $2$-dimensional $1$-surgery. In Fig.~\ref{TS_CP_2D_Gradient_Black_2}, this process starts from a one-sheet hyperboloid which is continuously transformed to the ending two-sheets hyperboloid.

\subsection{Gradient description}\label{Gradient2D}
The gradient vector field $\nabla g=(-2x,2y,2z)$ which is perpendicular to the level surfaces $-x^2+y^2+z^2=t$ describing $2$-dimensional $0$-surgery is shown in Fig.~\ref{TS_CP_2D_Gradient_Black_2}. The flow of $S^0$, which is composed of the two red points $(-1,0,0) \sqcup (1,0,0)$ in  Fig.~\ref{TS_CP_2D_Gradient_Black_2}, follows the red vectors along the $x$-axis towards the critical point $(0,0,0)$. After collapsing, the new green circle $S^1$ emerges along the $(y,z)$-plane as a result of the green vectors. In other words, the process of $2$-dimensional $0$-surgery can be seen as the collapsing of the core $S^0$ of discs $S^0 \times D^2$ to the critical point from which the core $S^1$ of cylinder $D^1 \times S^1$ uncollapses. The two directions followed by the cores are along the (red) segment $D^1$ on  the $(x)$-axis and the (green) disc $D^2$ on the $(y,z)$-plane that make up the $3$-dimensional $1$-handle $D^1\times D^2$ in $\mathbb{R}^3$. If we view the gradient vectors of Fig.~\ref{TS_CP_2D_Gradient_Black_2} as forces, the attracting forces acting on the core $S^0$  are fleshed out to the whole $S^0 \times D^2$ until the critical point is reached after which, the repelling forces uncollapsing the core $S^1$ are fleshed out to the cylinder $D^1 \times S^1$. 

Taking the one dimension higher analogue of $1$-dimensional $0$-surgery presented in Section~\ref{Gradient1D}, if the forces acting on a particle are conservative, then the local form of the Morse function $g$ can be used as a potential energy function giving rise to the local forces related to $2$-dimensional $0$-surgery: $\vec{F}=-(\nabla V)=\nabla g$. The gradient vector field perpendicular to the level surfaces describing $2$-dimensional $1$-surgery can be described analogously.

\smallbreak
\begin{figure}[!ht]
\begin{center}
\includegraphics[width=9cm]{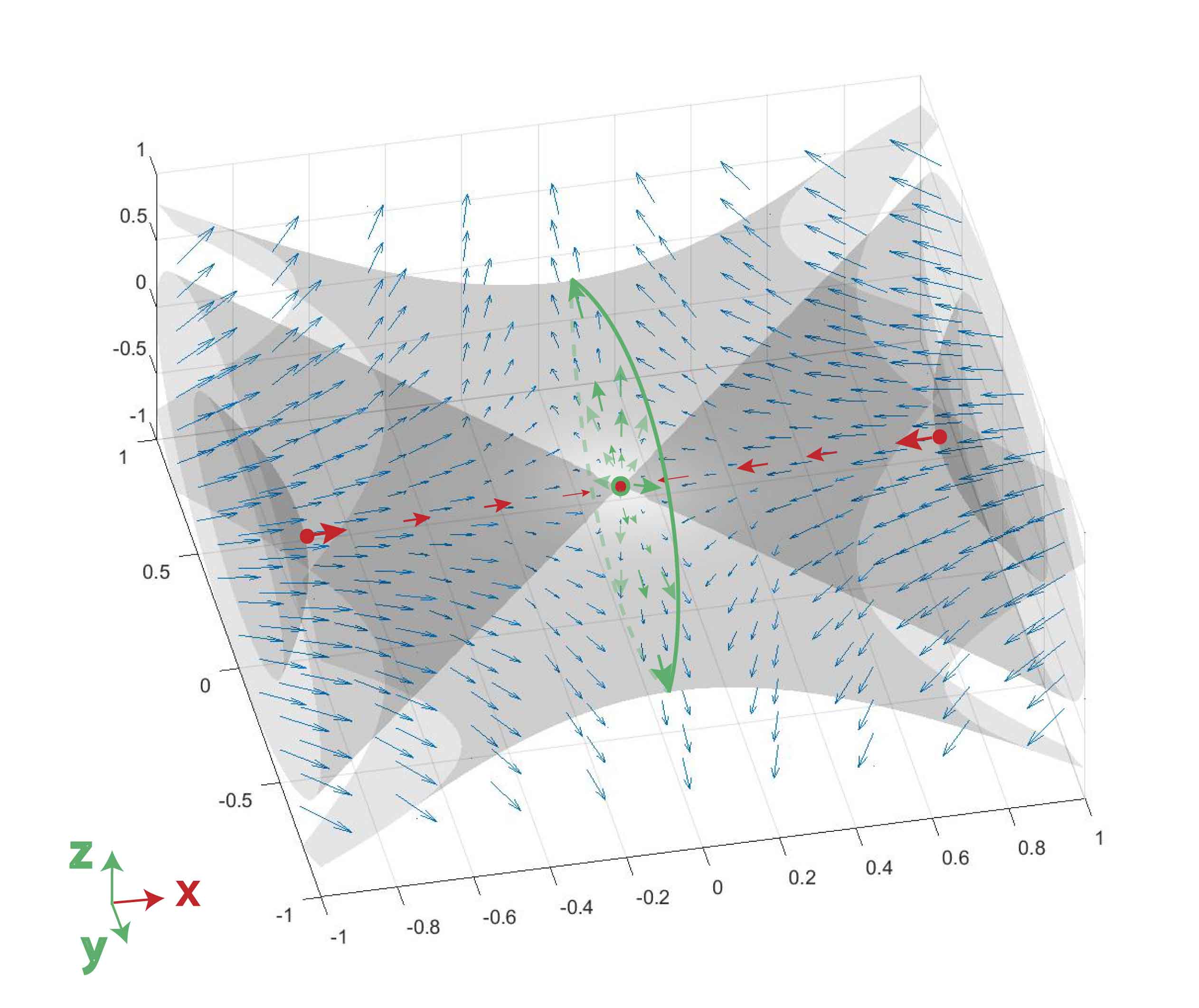}
\caption{The gradient $  \nabla g=(-2x,2y,2z)$}
\label{TS_CP_2D_Gradient_Black_2}\end{center}
\end{figure}

\subsection{$2$-dimensional phenomena}\label{NatPhe2D}
The above analysis provides a way to describe natural phenomena exhibiting $2$-dimensional surgery, that is, phenomena where $2$-dimensional merging and recoupling occurs. Roughly speaking,  $2$-dimensional $0$-surgery can be seen in phenomena where a cylinder is created, while $2$-dimensional $1$-surgery can be seen in phenomena where a cylinder is collapsed.

Examples of $2$-dimensional $0$-surgery comprise the formation of tornadoes, drop coalescence  (the phenomenon where two dispersed drops merge into one), gene transfer in bacteria (where the donor cell produces a connecting tube called a `pilus' which attaches to the recipient cell) and the formation of Falaco solitons, see Fig.~\ref{2D_Falacohomeo2}~(1). Each Falaco soliton consists of a pair of locally unstable but globally stabilized contra-rotating identations in the water-air surface of a swimming pool, see \cite{Ki} for details. The cylinder that is being created can take various forms. For example, it is a tubular vortex of air in the case of tornadoes, a pilus joining the genes during bacterial gene transfer and transverse torsional waves in the case of Falaco solitons, see Fig.~\ref{2D_Falacohomeo2}~(2). 

On the other hand, $2$-dimensional $1$-surgery can be seen during soap bubble splitting (where a soap bubble splits into two smaller bubbles),  when the tension applied on metal specimens by tensile forces results in the phenomena of necking and then fracture and in the biological process of mitosis (where a cell splits into two new cells). These phenomena are characterized by a `necking' occurring in a cylinder $D^1\times S^1$, which degenerates into a point and finally tears apart creating two discs $S^0\times D^2$. The cylinder that is about to collapse can be embedded, for example, in the region of the bubble's  surface where splitting occurs, on the region of metal specimens where necking and fracture occurs, or on the equator of the cell which is about to undergo a mitotic process. These phenomena and their relation to topological surgery have been detailed in \cite{SS0} where we pin down the forces that are present is these processes.

\smallbreak
With this analysis, the local form of the Morse function $g$ can be used to describe algebraically the processes of $2$-dimensional surgeries. Moreover, our analysis provides a novel description of these processes if the gradient vectors $\nabla g$ of Fig.~\ref{TS_CP_2D_Gradient_Black_2} are viewed as forces.

\subsection{Non-trivial embeddings}\label{2Dtwist}
In this section, based on the phenomenon of Falaco solitons creation, we will  examine the local process of topological surgery for non-trivial embeddings (recall Definition~\ref{surgery}).

Let us start by pointing out that, for phenomena exhibiting $2$-dimensional $0$-surgery, the various forms of the attached cylinder are homeomorphic representations of the cylinder $D^1 \times S^1$ shown in Fig.~\ref{TS_CP_2D_Gradient_Black_2}. For example, during the formation of Falaco solitons, the cylinder (and the whole $3$-dimensional $1$-handle $D^1\times D^2$ inside which the local process takes place) is bended and twisted, see Fig.~\ref{2D_Falacohomeo2}~(2). Note that the singular thread shown in Fig.~\ref{2D_Falacohomeo2}~(1) is the segment $D^1$ joining the core $S^0$, which in this case comprises the two central points of the Falaco solitons.

\smallbreak
\begin{figure}[!ht]
\begin{center}
\includegraphics[width=11cm]{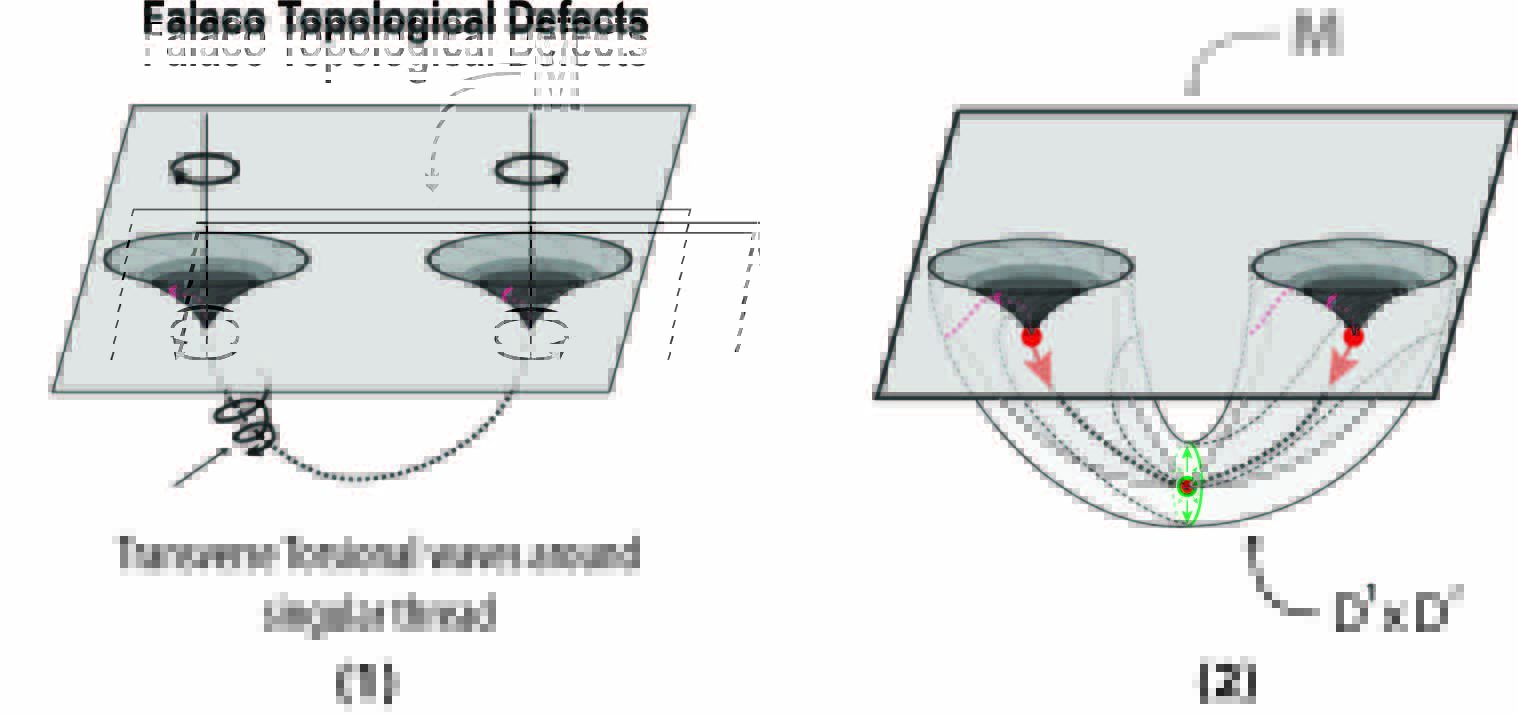}
\caption{\textbf{(1)} Falaco solitons \textbf{(2)} Homeomorphic representation of handle $D^1\times D^2$}
\label{2D_Falacohomeo2}\end{center}
\end{figure}

Up to now, when referring to the embedding $h$ of Definition~\ref{surgery}, we have assumed that the standard (or trivial) embedding, which we will denote by $h_s$, was used. For example, the process of $2$-dimensional $0$-surgery shown in Fig.~\ref{TS_CP_2D_Gradient_Black_2} does not involve twisting. The same process is shown in Fig.~\ref{2D_Twist_Instances}~(1) where the key instances have been discretized for the purpose of clarity. However, many phenomena, including the formation of Falaco solitons, correspond to a non-trivial embedding, say $h_t$, which involves twisting. The two indentations of Fig.~\ref{2D_Falacohomeo2}~(1) can be seen as the first instance of the local process of $2$-dimensional $0$-surgery,  which can be described by an embedding $h_t(S^0\times D^2)$ twisting the two discs. An example of such an embedding can be seen in the leftmost instance of Fig.~\ref{2D_Twist_Instances}~(2). The cylindrical vortex $D^1 \times S^1$ made from the propagation of the torsional waves around the singular thread seen in Fig.~\ref{2D_Falacohomeo2}~(2) can be considered as the final instance of the process, corresponding to the rightmost instance of Fig.~\ref{2D_Twist_Instances}~(2).

The difference between the two embeddings $h=h_s$ and $h=h_t$ is shown in Fig.~\ref{2D_Twist_Instances}~(1) and (2) respectively. More precisely, if we consider counterclockwise rotations as positive, embedding $h_t$ rotates the two initial discs by $-3\pi/4$ and $3\pi/4$ respectively, see the passage from the leftmost instance of Fig.~\ref{2D_Twist_Instances}~(1) to the leftmost instance of Fig.~\ref{2D_Twist_Instances}~(2). If we define the homeomorphisms $\omega_1,\omega_2:D^{2}\to D^{2}$ to be rotations by $-3\pi/4$ and $3\pi/4$ respectively, then $h_t$ is defined as the composition $h_t:S^{0}\times D^2 \xrightarrow{\omega_1 \amalg  \omega_2}  S^0\times D^{2}  \xrightarrow{h} M $. This rotation induces the twisting $g_t$ of angle $-3\pi/2$ of the final cylinder, see the rightmost instances of Fig.~\ref{2D_Twist_Instances}~(1) and~(2).

\smallbreak
\begin{figure}[!ht]
\begin{center}
\includegraphics[width=10cm]{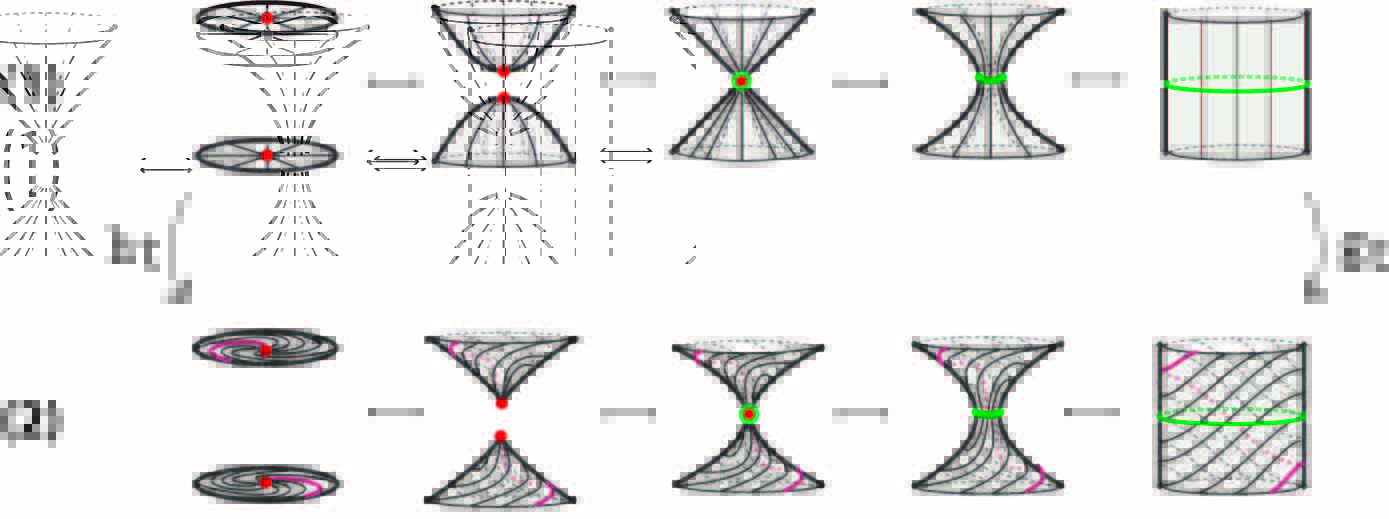}
\caption{$2$-dimensional surgery with \textbf{(1)} the standard embedding  $h=h_s$  \textbf{(2)} the non-trivial embedding $h=h_t$}
\label{2D_Twist_Instances}\end{center}
\end{figure}

When the topological thread is cut, for example when the Falaco solitons hit an obstacle perpendicular to their displacement, the cylindrical vortex tears apart and slowly degenerates to the two discs until they both stop spinning and vanish. Note that since the dissipation of Falaco solitons  is slower than their creation, the intermediate instances of this process can be visualized in real time in experiments such as \cite{PhyG}. This reverse process is the passage from Fig.~\ref{2D_Falacohomeo2}~(2) to Fig.~\ref{2D_Falacohomeo2}~(1) and corresponds to the $2$-dimensional $1$-surgery shown from right to left in Fig.~\ref{2D_Twist_Instances}~(2). 

In this case, the initial cylinder is twisted. In our example, homemorphism $g_t$ rotates the top and bottom of the cylinder by $-3\pi/4$ and $3\pi/4$ respectively, see the passage from the rightmost instance of Fig.~\ref{2D_Twist_Instances}~(1) to the rightmost instance of Fig.~\ref{2D_Twist_Instances}~(2). This rotation induces the twisting of the two final discs, as in the leftmost instance of Fig.~\ref{2D_Twist_Instances}~(2).  

Let us finally conclude that the homeomorphisms $h_t$ and $g_t$ as illustrated in Fig.~\ref{2D_Twist_Instances} provide a better description of the creation (or dissipation) of Falaco solitons and, more generally, of phenomena involving twisting with `drilling' (or twisting with `necking').

\section{Local dynamics of $3$-dimensional surgery}\label{3dMorse}
In this section, we describe locally $3$-dimensional surgery using the local form of a Morse function, we propose ways to visualize this $4$-dimensional process and we  connect the processes of surgery in dimensions $1,2$ and $3$ via rotation. This section together with the next one set the ground for the analysis of cosmic phenomena exhibiting $3$-dimensional surgery, which will be discussed in Section~\ref{3d0WH}.

\subsection{Types of $3$-dimensional surgery}\label{Types3D}
From Definition~\ref{surgery}, we know that there are three types of surgery in dimension $3$. Namely, starting with a 3-manifold $M$, for $m=3$ and $n=0$, we have the \textit{ $3$-dimensional $0$-surgery}, whereby two 3-balls $S^0\times D^3$ are removed from  $M$ and are replaced in the closure of the remaining manifold by a thickened sphere $D^1\times S^2$:

\begin{samepage} 
 \begin{center}
$\chi(M) = \overline{M\setminus h(S^0\times D^{3})} \cup_{h} (D^{1}\times S^{2})$
\end{center}  
\end{samepage} 

Next, for $m=3$ and $n=2$, we have the \textit{ $3$-dimensional $2$-surgery}, which is the reverse (dual) process of $3$-dimensional $0$-surgery. 

Finally, for $m=3$ and $n=1$, we have the \textit{ $3$-dimensional $1$-surgery}, whereby a solid torus $S^1\times D^2$ is removed from $M$ and is replaced by another solid torus $D^2\times S^1$ (with the factors now reversed) via a homeomorphism $h$ of the common boundary:

\begin{samepage} 
 \begin{center}
$\chi(M) = \overline{M\setminus h(S^1\times D^{2})} \cup_{h} (D^{2}\times S^{1})$
\end{center}  
\end{samepage} 

This type of surgery is clearly self-dual.

\subsection{Temporal evolution, gradient and core description}\label{3DTemp}
Consider the Morse function of Lemma~\ref{LocalMorse} for the case $m=3$ and $i=1$: $$f:D^{4} \to \mathbb{R}; \quad (x,y,z,w) \mapsto -x^2+y^2+z^2+w^2$$ Applying the line of thought presented in Sections~\ref{1d0Morse} and~\ref{2dMorse}, the local process of \textit{$3$-dimensional $0$-surgery} happens inside the $4$-dimensional handle  $D^1\times D^3$ and can be described by varying parameter $t$ of the level hypersurfaces $-x^2+y^2+z^2+w^2=t$.  These hypersufaces and the perpendicular gradient vector field $\nabla f=(-2x,2y,2z,2w)$ require four  dimensions in order to be visualized. However, we can describe and visualize the behaviors of the cores and the gradient along their direction of movement. We will refer to this visualization as the `core view' of $3$-dimensional $0$-surgery. The process starts with the core $S^0$ of $S^0\times D^3$, see the two red points in the leftmost instance of Fig.~\ref{3D_Cores}~(1). These two points are attracted towards $(0,0,0,0)$ under the influence of the gradient $\nabla f=(-2x,2y,2z,2w)$ which is negative along the horizontal axis $x$. Along $x$, the local form of the corresponding Morse function is $-x^2=t$ for $-1<t<0$. The two points touch at the critical point $(0,0,0,0)$ which is the intersection $D^1 \cap D^3$ (within the $4$-dimensional handle $D^1\times D^3$), see the middle instance of Fig.~\ref{3D_Cores}~(1). Then, the core $S^2$ of $D^1\times S^2$ uncollapses along the axes $y,z,w$ under the influence of the gradient $\nabla f=(-2x,2y,2z,2w)$ which is positive along axes $y,z,w$, see the  rightmost instance of Fig.~\ref{3D_Cores}~(1). The local form of the corresponding Morse function along $y,z,w$ is $w^2+z^2+y^2=t$ for $0<t<1$. Note that the core $S^0$ (respectively the core $S^2$) bounds the disc $D^1$ (respectively the $3$-ball $D^3$) of  the $4$-dimensional handle $D^1\times D^3$ in which the process takes place. If we view the gradient of Fig.~\ref{3D_Cores}~(1) as a force, one can imagine the $4$-dimensional process by following the line of thought presented in Section~\ref{Gradient2D}. Namely, the attracting forces acting on the cores $S^0$ are fleshed out to the two $3$-balls $S^0 \times D^3$ until the critical point is reached, after which the repelling forces uncollapsing the core $S^2$ are fleshed out to the thickened sphere $D^1 \times S^2$.

\begin{figure}[ht!]
\begin{center}
\includegraphics[width=11cm]{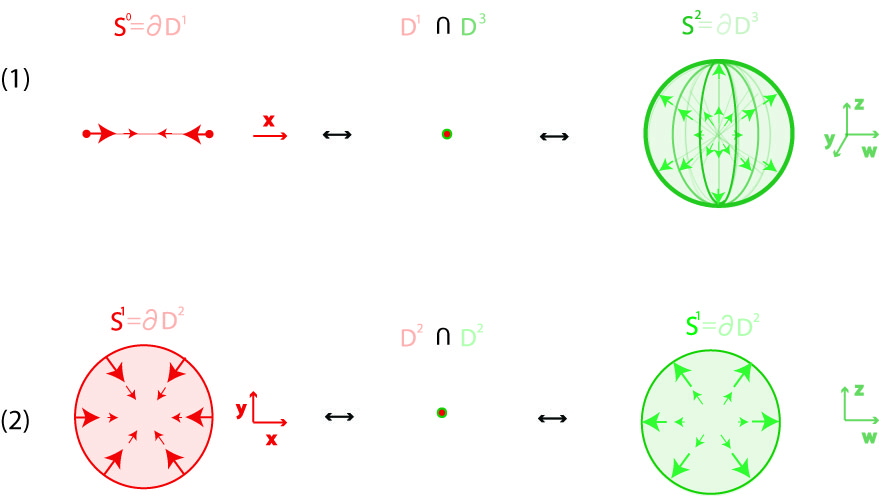}
\caption{ Core view of \textbf{(1)} $3$-dimensional $0$-surgery \textbf{(2)} $3$-dimensional $1$-surgery}
\label{3D_Cores}
\end{center}
\end{figure}  
\smallbreak

Similarly, for \textit{$3$-dimensional $1$-surgery}, we consider the Morse function of Lemma~\ref{LocalMorse} for the case $m=3$ and $i=2$: $$g:D^{4} \to \mathbb{R}; \quad (x,y,z,w) \mapsto -x^2-y^2+z^2+w^2$$ In this case, the local process of $3$-dimensional $1$-surgery happens inside the handle $D^2\times D^2$ and can be described by varying parameter $t$ of the level hypersurfaces $-x^2-y^2+z^2+w^2=t$. In Fig.~\ref{3D_Cores}~(2), the `core view' of $3$-dimensional $1$-surgery is presented. The process starts with the core $S^1$ (shown in red) of the solid torus $S^1\times D^2$. The points of this circle are attracted towards $(0,0,0,0)$ under the influence of the gradient $\nabla g=(-2x,-2y,2z,2w)$ which is negative along axes $x,y$, see the leftmost instance of Fig.~\ref{3D_Cores}~(2). Along these axes, the local form of the corresponding Morse function is $-x^2-y^2=t$ for $-1<t<0$. The circle collapses at the critical point $(0,0,0,0)= D^2 \cap D^2$, see the middle instance of Fig.~\ref{3D_Cores}~(2). Then, the core $S^1$ (shown in green in the rightmost instance of Fig.~\ref{3D_Cores}~(2)) of the solid torus with the factors reversed, $D^2\times S^1$, uncollapses along axes $z,w$ under the influence of the gradient $\nabla g=(-2x,-2y,2z,2w)$ which is positive along axes $z,w$. The local from of the corresponding Morse function along $z,w$ is $z^2+w^2=t$ for $0<t<1$. Note that each core $S^1$ bounds a disc $D^2$ of  the $4$-dimensional handle $D^2\times D^2$ in which the process takes place. If we view the gradient of Fig.~\ref{3D_Cores}~(2) as a force, the $4$-dimensional process can be imagined as follows: the attracting forces acting on the core $S^1$  are fleshed out to the solid torus $S^1 \times D^2$ until the critical point is reached, after which the repelling forces uncollapsing the other core $S^1$ are fleshed out to the other solid torus $D^2 \times S^1$.

\subsection{$3$-dimensional surgery via rotation}\label{3DRot}
In~\cite{SS1}, it is was remarked that $2$-dimensional surgery can be obtained from $1$-dimensional surgery by rotation. Here, we will prove this fact using Morse functions. Let us start by remarking that the level surfaces of $2$-dimensional surgery can be obtained by rotating the level curves of $1$-dimensional $0$-surgery. Indeed, for any given time $t \in (0,1)$, rotating the hyperbolas $-x^2+y^2=t$ around the $x$-axis creates the surfaces $-x^2+y^2+z^2=t$ which describe $2$-dimensional $0$-surgery, see passage of Fig.~\ref{TS_CP_1D_Gradient} to Fig.~\ref{TS_CP_2D_Gradient_Black_2}. Note that, instead of rotating each such temporal slice, one can rotate the whole handle which is comprised of them. For example, rotating the handle $D^1 \times D^1$ made of the parametrized hyperbolas of $1$-dimensional $0$-surgery gives us the handle $D^1 \times D^2$  made of the parametrized surfaces of $2$-dimensional $0$-surgery. The rotation happens around the $x$-axis in the $(y,z)$-plane thus turning $D^1 \times D^1$ to $D^1 \times D^2$ by creating the new repelling direction in the $z$-axis, see the passage from Fig.~\ref{123Rotation}~(1) to~(2). As also shown in Fig.~\ref{123Rotation}~(1) to~(2), the collapsing segments $S^0 \times D^1$ are expanded to $S^0 \times D^2$ while the rotation of core $S^0$ of the uncollapsing segments $D^1 \times S^0$ turns into core $S^1$ of the uncollapsing cylinder  $D^1 \times S^1$. Note that the reverse process of Fig.~\ref{123Rotation}~(2) results in a necking of the cylinder $D^1 \times S^1$, collapsing to the center and recoupling, thus, it describes, $2$-dimensional $1$-surgery via rotation.

\smallbreak
\begin{figure}[ht!]
\begin{center}
\includegraphics[width=13.5cm]{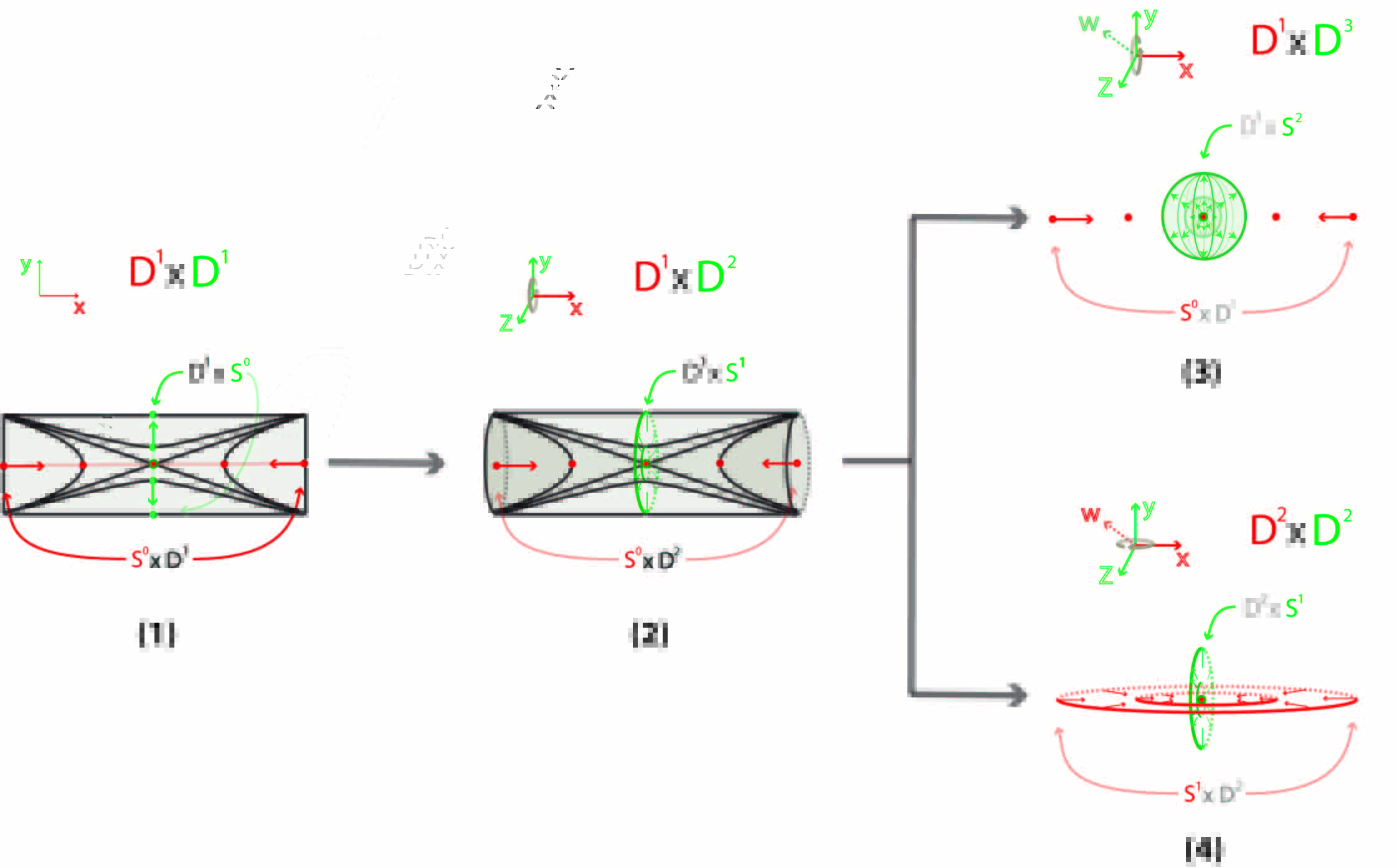}
\caption{ \textbf{(1)} $1$-dimensional $0$-surgery \textbf{(2)} $2$-dimensional $0$-surgery via rotation \textbf{(3)} $3$-dimensional $0$-surgery via rotation \textbf{(4)} $3$-dimensional $1$-surgery via rotation}
\label{123Rotation}
\end{center}
\end{figure}

Moving one dimension up, the instances of both types of $3$-dimensional surgery can be seen as rotations of the instances of $2$-dimensional $0$-surgery taking place in $D^1 \times D^2$. More precisely, for \textit{$3$-dimensional $0$-surgery}, a rotation of $D^1 \times D^2$ around the $x$-axis in the $(y,z,w)$-hyperplane turns it to $D^1 \times D^3$ by creating the new repelling direction in the $w$-axis, see  Fig.~\ref{123Rotation}~(3). The resulting handle $D^1 \times D^3$ is made of the layering of the hypersurfaces $-x^2+y^2+z^2+w^2=t$, $-1<t<1$. In this case, the collapsing discs $S^0 \times D^2$ are thickened to collapsing $3$-balls $S^0 \times D^3$ while the core $S^1$ of the uncollapsing cylinder $D^1 \times S^1$ turns into the core $S^2$ of the uncollapsing thickened sphere $D^1 \times S^2$, see the passage from Fig.~\ref{123Rotation}~(2) to~(3). Note that as handle $D^1 \times D^3$ is $4$-dimensional, only the core view is shown in Fig.~\ref{123Rotation}~(3).

Similarly, for \textit{$3$-dimensional $1$-surgery}, a rotation around the $y$-axis in the $(x,z,w)$-hyperplane turns $D^1 \times D^2$ to $D^2 \times D^2$ by creating the new attracting direction in the $w$-axis, see Fig.~\ref{123Rotation}~(4). In this case, the resulting handle $D^2 \times D^2$ is made of the  hypersurfaces $-x^2+y^2+z^2-w^2=t$, $-1<t<1$. Here, the rotation of the core $S^0$ of the collapsing discs $S^0 \times D^2$ creates the core $S^1$ of the collapsing solid torus $S^1 \times D^2$, while the uncollapsing of the cylinder $D^1  \times S^1$ creates via rotation the uncollapsing solid torus $D^2  \times S^1$, see the passage from Fig.~\ref{123Rotation}~(2) to~(4) where only the core view is shown. Note that, in the local form of the Morse function presented here, directions $y$ and $w$ are interchanged compared to the Morse function $g$ presented in the previous section. This is just a matter of convention and is due to the fact that, in Lemma~\ref{LocalMorse}, Morse functions sum up the negative coordinates first,  hence considering that directions $x$ and $y$ are attracting, whereas here the two attracting directions are $x$ and $w$ because we rotated the predefined coordinates of the local form of the Morse function of $2$-dimensional $0$-surgery.

\begin{rem} \rm
The rotations creating the handles comprised of the instances of both types of $3$-dimensional surgery correspond to thickenings of the core views of Fig.~\ref{123Rotation}~(3) and~(4). However, as already mentioned, this requires the fourth dimension in order to be visualized. Yet, one can visualize the initial and the final instance of both processes of $3$-dimensional surgery in $\mathbb{R}^{3}$ by using stereographic projection. This visualization is presented in the Appendix \ref{2D3DStereo}.
\end{rem}

\subsection{$m$-dimensional surgery via rotation}\label{ND}

In the previous section we showed how one can obtain the instances of surgery and the  local forms of the corresponding Morse functions for dimensions $1,2$ and $3$. In this section we generalize this idea for an arbitrary surgery dimension $m$.

\smallbreak
\begin{figure}[ht!]
\begin{center}
\includegraphics[width=4cm]{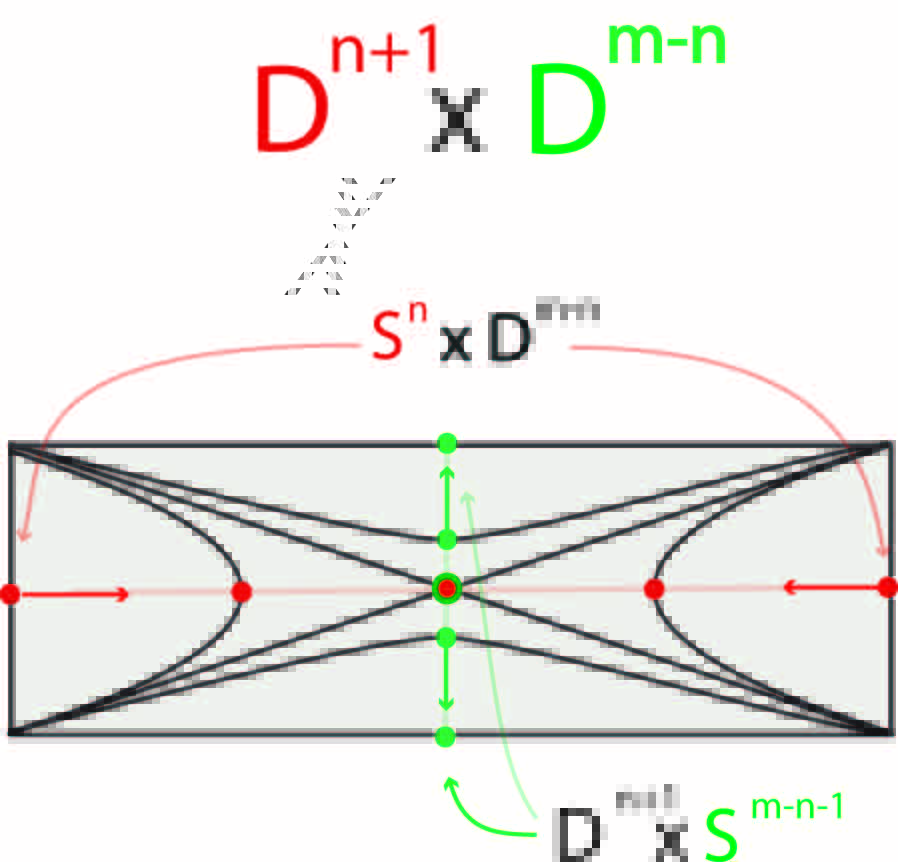}
\caption{ $m$-dimensional $n$-surgery within $D^{n+1} \times D^{m-n}$}
\label{HigherDRotation}
\end{center}
\end{figure}

The local process of an $m$-dimensional $n$-surgery is abstracted in Fig.~\ref{HigherDRotation}. Its instances are made of hypersurfaces given by:

$$   -‎‎\sum\limits‎_‎{j=1}^{n+1‎} x^2_{j} +‎‎\sum\limits‎_‎{j=n+2}^{m+1‎} x^2_{j}=t   , \quad -1<t<1    $$

By varying parameter $t$, one continuously collapses the core $S^n$ of the thickened sphere $S^n \times D^{m-n}$ to the critical point $D^{n+1} \cap D^{m-n}$ from which the core $S^{m-n-1}$ of the thickened sphere $D^{n+1} \times S^{m-n-1}$ uncollapses. The handle $D^{n+1} \times D^{m-n}$ made of these instances can be obtained by $(m-1)$ successive rotations in increasingly higher dimensions of the initial handle $D^{1} \times D^{1}$ made of the instances of $1$-dimensional $0$-surgery.

\subsection{Outlining the $4$-dimensional process }\label{4Doutline}

One can provide an outline of the $4$-dimensional process of $3$-dimensional surgery by analogy to what happens in one dimension lower. We start by illustrating in Fig.~\ref{3D_ThickenedCores}~(1) the $3$-dimensional process of $2$-dimensional $0$-surgery. In the figure we deliberately choose a homeomorphic representation, as the one exhibited by Falaco solitons in Fig.~\ref{2D_Falacohomeo2}, where the two discs $S^0\times D^2$ start embedded in the plane $\mathbb{R}^2$, see instance (a) of Fig.~\ref{3D_ThickenedCores}~(1), but the rest of the process happens in $\mathbb{R}^3$, see instances (b)-(e) of Fig.~\ref{3D_ThickenedCores}~(1).

\smallbreak
\begin{figure}[ht!]
\begin{center}
\includegraphics[width=15cm]{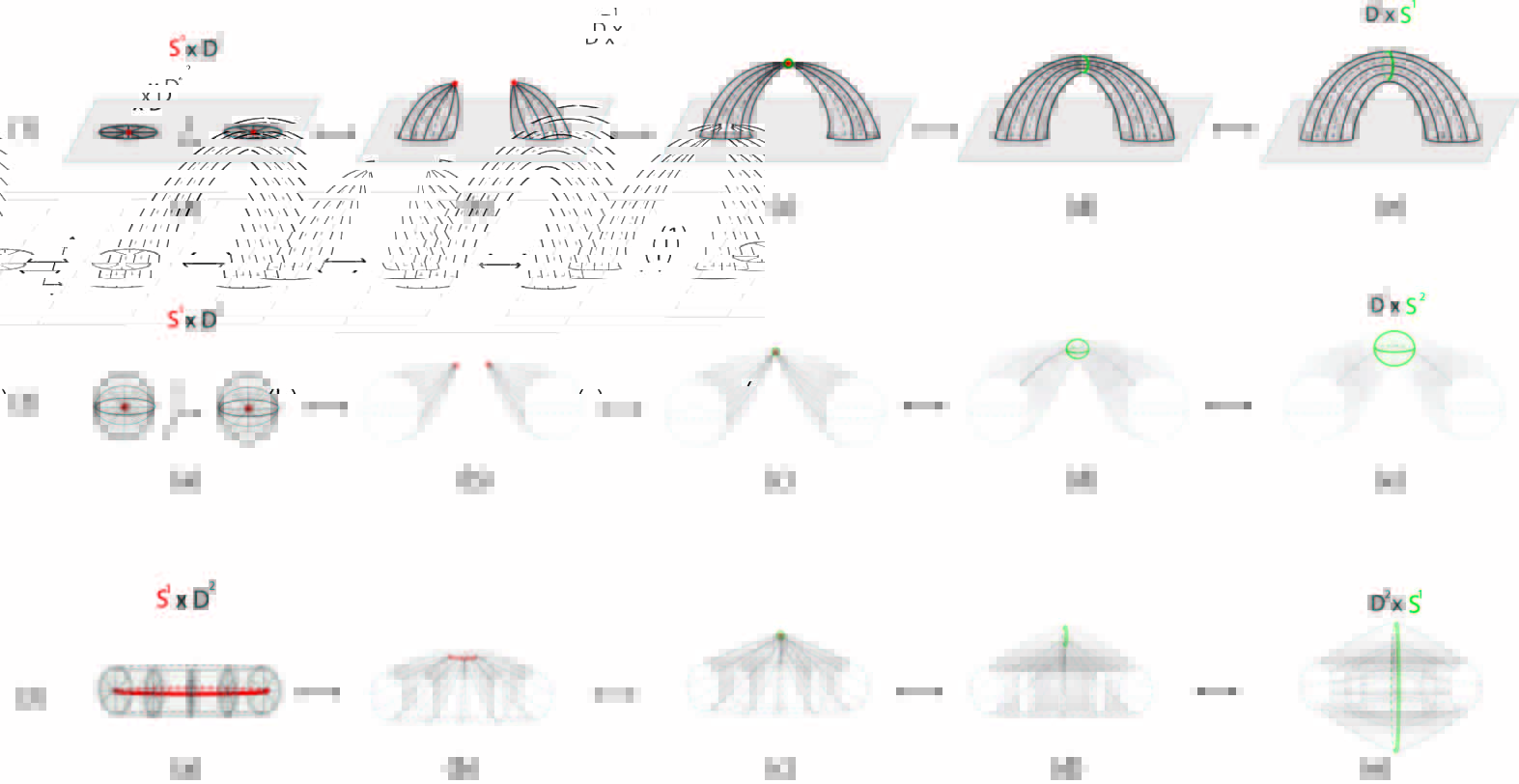}
\caption{ (1) $2$-dimensional $0$-surgery (2) Outline of $3$-dimensional $0$-surgery (3) Outline of $3$-dimensional $1$-surgery}
\label{3D_ThickenedCores}
\end{center}
\end{figure}

In analogy, if a \textit{$3$-dimensional $0$-surgery} starts with two $3$-balls  $S^0\times D^3$ embedded in $\mathbb{R}^3$, see instance (a) of Fig.~\ref{3D_ThickenedCores}~(2), then the rest of the process takes place in $\mathbb{R}^4$, see instances (b)-(e) of Fig.~\ref{3D_ThickenedCores}~(2). Instances (b) and (c) of Fig.~\ref{3D_ThickenedCores}~(2) illustrate the fact that the two $3$-balls  $S^0\times D^3$ `bend' and touch in the fourth dimension while instances (d) and (e) of the same figure illustrate the emerging of the thickened sphere $D^1 \times S^2$. Note that instances (b)-(e) are deliberately shown with increased transparency to depict the fact that the higher dimensional merging and recoupling is not visible in $\mathbb{R}^3$. 

Similarly, if a \textit{$3$-dimensional $1$-surgery} starts with a solid torus $S^1\times D^2$ embedded in $\mathbb{R}^3$, see instance (a) of Fig.~\ref{3D_ThickenedCores}~(3), then the rest of the process in $\mathbb{R}^4$ is outlined in instances (b)-(e) of the same figure. More precisely, instances (b) and (c) sketch the higher dimensional collapse of the solid torus  $S^1\times D^2$ while instances (d) and (e) of Fig.~\ref{3D_ThickenedCores}~(3) sketch the emerging of the solid torus $D^2 \times S^1$ (with the factors reversed).

\section{Global topology and $3$-dimensional surgery}\label{3DtopoAll}
In this section we discuss the global effect of both types of $3$-dimensional surgery on a $3$-manifold $M$ and present some examples and visualizations. As we will see in Section~\ref{3D0Topo}, the result of $3$-dimensional $0$-surgery on a $3$-manifold $M$ is homeomorphic to $M \# ({S^{1}\times S^2})$. On the other hand, $3$-dimensional $1$-surgery is a much more powerful topological tool. Indeed, as explained in Section~\ref{3D1Topo}, starting with $M=S^3$, this type of surgery can create the whole class of closed, connected, orientable $3$-manifolds.
 
\subsection{$3$-dimensional $0$-surgery}\label{3D0Topo}
In Section~\ref{3D0S3}, we present the process of $3$-dimensional $0$-surgery on $M=S^3$. In Section~\ref{ConnSum3d0}, we define the connected sum of two manifolds and show the result of $3$-dimensional $0$-surgery on a $3$-manifold. Finally, in Section~\ref{Fundamen3d0}, we characterize the effect of surgery by determining the fundamental group of the resulting manifold.

\subsubsection{$3$-dimensional $0$-surgery in \texorpdfstring{$S^3$}{S3}}\label{3D0S3}
Let us start by recalling that the $3$-sphere $S^3$ is made by gluing two $3$-balls along their common boundary. Hence, $S^3=B_1^{3} \cup_\theta B_2^{3}$, via a homeomorphism $\theta$ along the boundary $S^2=\partial B_1^{3}=\partial B_2^{3}$. By the Alexander Lemma (see for example \cite{Ro}), any such homeomorphism $\theta$ extends to a homeomorphism between the two $3$-balls, so the result of this gluing will always be homeomorphic to $S^3$.

This decomposition is very helpful in examining the effect of $3$-dimensional $0$-surgery on $M=S^3$ as we can consider that one of the two $3$-balls to be removed, $S^0 \times D^3$, is $B_1^{3}$ while the other one is embedded inside $B_2^{3}$, see Fig.~\ref{3D0_S3}~(1) where the curved vectors in grey represent `gluing along the common boundary'.

\smallbreak
\begin{figure}[ht!]
\begin{center}
\includegraphics[width=16cm]{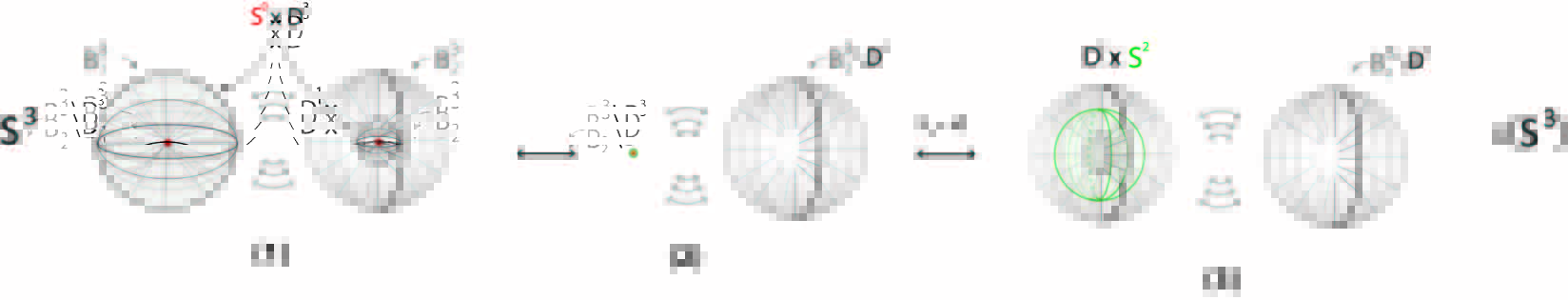}
\caption{ \textbf{(1)} $S^3 = B_1^{3} \cup_\theta B_2^{3}$ \textbf{(2)} $S^0 \times D^3$ collapses  \textbf{(3)} $D^1 \times S^2$ emerges}
\label{3D0_S3}
\end{center}
\end{figure}

The process of \textit{$3$-dimensional $0$-surgery in $S^3$} collapses two $3$-balls $S^0\times D^3$ to a singular point and we are left with $\overline{S^3 \setminus (S^0 \times D^3)}= \overline{B_2^{3}  \setminus D^3}$, a thickened sphere, see Fig.~\ref{3D0_S3}~(2). Then, another thickened sphere $D^1 \times S^2$ (which is a $3$-dimensional tube) uncollapses and is glued with $\overline{B_2^{3}  \setminus D^3}$ along the two common spherical boundaries, see Fig.~\ref{3D0_S3}~(3) and the resulting manifold is:

\begin{equation} 
\begin{aligned}
\chi(S^3) = \chi(B_1^{3} \cup_\theta B_2^{3}) = \overline{(B_1^{3} \cup_\theta B_2^{3})\setminus h_s(S^0\times D^3)} \cup_{h_s|_{S^0\times S^{2}}} (D^1 \times S^2) \\ 
= \overline{(B_2^{3} \setminus D^3)} \cup_{h_s|_{S^0\times S^{2}}} (D^1 \times S^2).
\end{aligned} \nonumber
\end{equation}

As we will see in next section, there is a simpler homeomorphic representation of the resulting manifold.

\subsubsection{$3$-dimensional $0$-surgery in $M^3$}\label{ConnSum3d0}
Let us start by defining the connected sum:
\begin{defn} \label{ConnSum} \rm  The \textit{connected sum} of two $m$-dimensional manifolds $M$, $M'$ is the $m$-dimensional manifold $M \# M'$: 

$$ M \# M'=\overline{(M\setminus D^{m})} \cup (D^1\times {S^{m-1}}) \cup \overline{(M'\setminus D^{m})}  $$

\noindent obtained by excising the interiors of two embedded $m$-discs, $D^{m}\hookrightarrow  M$ and $ D^{m}\hookrightarrow  M' $,  and joining the resulting boundary components $S^{m-1}\hookrightarrow  \overline{(M\setminus D^{m})}$ and  $S^{m-1}\hookrightarrow  \overline{(M'\setminus D^{m})}$ by an $m$-dimensional tube (or a thickened sphere) $D^1\times {S^{m-1}}$.
\end{defn}

Equivalently, the connected sum $M \# M'$ can be viewed as the effect of an $m$-dimensional $0$-surgery on the disjoint union $M \sqcup M'$ which removes the embeddings ${S^{0}\times D^{m}}\hookrightarrow  M \sqcup M'$ defined by the disjoint union of embeddings $D^{m}\hookrightarrow  M$ and $ D^{m}\hookrightarrow  M' $ and connects $M$ and $M'$ by an $m$-dimensional tube $D^1\times {S^{m-1}}$. Conversely, an $m$-dimensional $0$-surgery can be viewed as a connected sum. More precisely, in the following proposition we show that the result of $m$-dimensional $0$-surgery on an $m$-manifold $M$ is homeomorphic to connecting $M$ and $S^1 \times S^{m-1}$ by a higher dimensional tube $D^1 \times S^{m-1}$, see Fig.~\ref{3D0_TopoChange}. Note that, in the figure all manifolds are shown for $m=2$. For example $D^m, D^{1}\times S^{m-1}$ and $S^{1}\times S^{m-1}$   are shown as $D^2, D^{1}\times S^{1}$ and $S^{1}\times S^1$ respectively. 

\begin{prop} \label{3d0S1S2}
The result $\chi(M)$ of $m$-dimensional $0$-surgery on a $m$-manifold $M$ is homeomorphic to the connected sum $M \# ({S^{1}\times S^{m-1}})$.
\end{prop}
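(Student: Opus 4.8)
The plan is to localize the surgery inside a single ball and then identify what that ball becomes. First I would use connectedness of $M$ to isotope the attaching embedding so that both components of $h(S^0\times D^m)$ lie in the interior of one embedded $m$-ball $B\subset M$; this is possible because any two points of a connected manifold lie in a common coordinate ball, together with the isotopy extension theorem (here I use the standard embedding $h_s$, as in the $S^3$ discussion of Section~\ref{3D0S3}). Since $\chi$ alters only the part of $M$ inside $B$, writing $M=\overline{M\setminus B}\cup_{\partial B}B$ gives
$$\chi(M)\;\cong\;\overline{M\setminus B}\;\cup_{S^{m-1}}\;N,\qquad N:=\overline{B\setminus h(S^0\times D^m)}\;\cup_{h|}\;(D^1\times S^{m-1}),$$
where $N$ is a compact $m$-manifold whose boundary is the single sphere $\partial B\cong S^{m-1}$.

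The heart of the argument is the identification $N\cong\overline{(S^1\times S^{m-1})\setminus\mathrm{int}(D^m)}$. I would prove this by capping off the outer boundary of $N$ with an $m$-ball and computing: $D^m\cup_{S^{m-1}}\overline{B\setminus h(S^0\times D^m)}$ is an $m$-sphere with two disjoint open balls removed, hence a cylinder $S^{m-1}\times I$; gluing the tube $D^1\times S^{m-1}$ to its two boundary spheres then yields $S^{m-1}\times S^1$. Thus $N\cup_{S^{m-1}}D^m\cong S^1\times S^{m-1}$, which says precisely that $N$ is $S^1\times S^{m-1}$ with the interior of an embedded ball excised. (Equivalently, one may note directly that $S^1\times S^{m-1}=\chi(S^m)$, performing the same $0$-surgery inside a ball of $S^m$ and using that the complement of a ball in $S^m$ is again a ball — essentially the Alexander Lemma already invoked in the excerpt.)

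Combining the two displays yields $\chi(M)\cong\overline{M\setminus D^m}\cup_{S^{m-1}}\overline{(S^1\times S^{m-1})\setminus\mathrm{int}(D^m)}$, and since gluing two punctured $m$-manifolds directly along their boundary spheres is homeomorphic to gluing them along a collar $D^1\times S^{m-1}$, this is exactly $M\#(S^1\times S^{m-1})$ in the sense of Definition~\ref{ConnSum}. I expect the main obstacle to be making the first step rigorous — isotoping the attaching region into one ball and verifying that the surgery is genuinely supported there — together with the bookkeeping point that the stated result depends on the standard (untwisted) framing $h_s$, since a twisted framing would instead produce the connected sum with the twisted $S^{m-1}$-bundle over $S^1$; the uniqueness-of-embedded-balls input (any two standard $S^0\times D^m$ embeddings into $\mathrm{int}(D^m)$ are ambient isotopic) is classical but should be cited.
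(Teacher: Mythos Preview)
Your proposal is correct and follows the same overall architecture as the paper: localize the surgery inside a single ball $B=D_0^m\subset M$, identify the surgered ball $N=\chi(D_0^m)$ with the punctured $S^1\times S^{m-1}$, and then reassemble to obtain the connected sum. The difference lies entirely in how the key identification $N\cong\overline{(S^1\times S^{m-1})\setminus\mathrm{int}(D^m)}$ is established. The paper argues by \emph{subtraction}: it writes $S^1\times S^{m-1}=(D^1\cup D^1)\times S^{m-1}$, strips a copy of $D^1\times S^{m-1}$ from each side of the claimed homeomorphism, and then matches the two remaining pieces $\overline{D_0^m\setminus(S^0\times D^m)}$ and $\overline{(D^1\times S^{m-1})\setminus D^m}$ by exhibiting identical Morse-level decompositions (one $S^{m-1}$, a critical level, then two copies of $S^{m-1}$). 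You instead argue by \emph{addition}: cap $N$ with an $m$-ball along $\partial B$, recognize $D^m\cup_{\partial B}\overline{B\setminus h(S^0\times D^m)}$ as $S^m$ with two open balls removed, i.e.\ a cylinder $S^{m-1}\times I$, and close it up with the surgery tube to get $S^1\times S^{m-1}$. Your route is shorter and avoids the level-set bookkeeping; the paper's route has the virtue of staying within the Morse-theoretic framework developed earlier in the article. Your explicit remarks about isotoping into a single ball and about the dependence on the untwisted framing $h_s$ are points the paper leaves implicit, and they are worth keeping.
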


\begin{proof}
We will first show that  $$ \chi{(D_0^{m})} \cong  \overline{({S^{1}\times S^{m-1}})\setminus D^{m}} \quad (\star)$$
In other words, the result of $m$-dimensional $0$-surgery on the disc $D_0^{m}$ is homeomorphic to the punctured  ${S^{1}\times S^{m-1}}$. For seeing this, we first consider $S^1$ as made up by two segments $D^1$: ${S^{1}\times S^{m-1}} = (D^{1} \cup D^1) \times S^{m-1}$. With this decomposition, we can remove a $D^{1}\times S^{m-1}$  from both sides of equation $(\star)$: $\overline{D_0^{m}\setminus (S^0\times D^{m})} \cup (D^{1}\times S^{m-1}) \setminus (D^{1}\times S^{m-1}) \cong  \overline{({D^{1}\times S^{m-1}}) \cup ({D^{1}\times S^{m-1}}) \setminus D^{m}} \setminus (D^{1}\times S^{m-1}) \iff \overline{D_0^{m}\setminus (S^0\times D^{m})} \cong  \overline{({D^{1}\times S^{m-1}})\setminus D^{m}}$.  
 
So, with the handles $D^{1}\times S^{m-1}$ removed, we only need to show that the remaining manifolds are homeomorphic. View Fig.~\ref{3D0_MorseConcat}~(1) where both $D^{1}\times S^{m-1}$ are shown with increased transparency. This is made clear in Fig.~\ref{3D0_MorseConcat}~(2) where both $\overline{D_0^{m}\setminus (S^0\times D^{m})}$ and $\overline{(D^{1}\times S^{m-1})\setminus D^{m}}$ are decomposed into Morse levels. For $m=2$ the Morse levels start as one circle (see levels $-4$ to $-1$ in  Fig.~\ref{3D0_MorseConcat}~(2)), which passes through a critical point (see level $0$ in  Fig.~\ref{3D0_MorseConcat}~(2)) and is divided into two circles (see levels $1$ to $4$  in  Fig.~\ref{3D0_MorseConcat}~(2)). Since the Morse levels of both $\overline{D_0^{2}\setminus (S^0\times D^{2})}$ and $\overline{(D^{1}\times S^{1})\setminus D^{2}}$ have been corresponded, these two manifolds are homeomorphic. The same decomposition can be generalized for $\overline{D_0^{m}\setminus (S^0\times D^{m})}$ and $\overline{(D^{1}\times S^{m-1})\setminus D^{m}}$ by considering level spheres $S^{m-1}$ instead of circles $S^1$.

\smallbreak
\begin{figure}[ht!]
\begin{center}
\includegraphics[width=13cm]{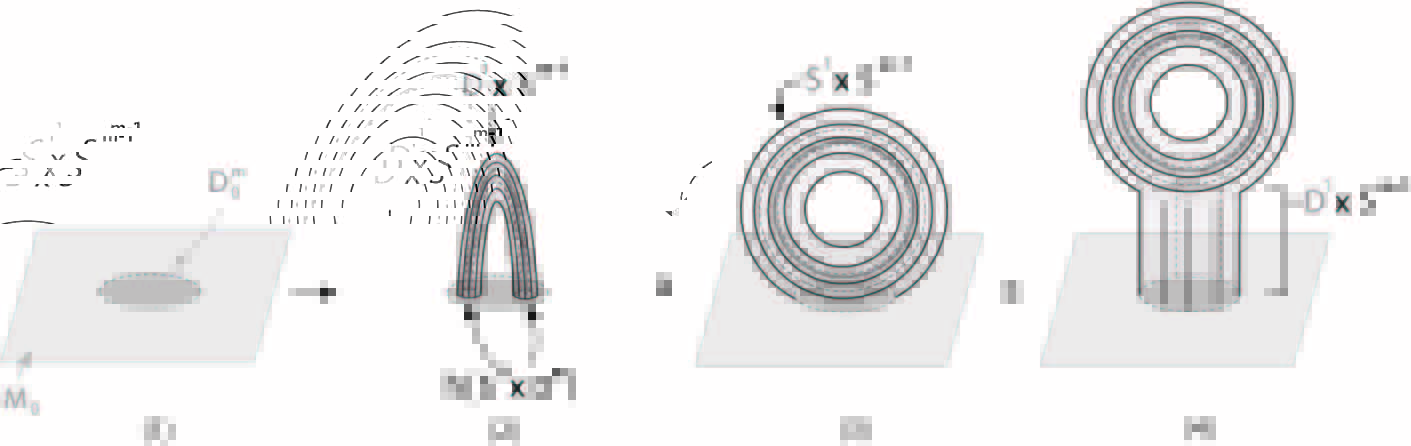}
\caption{ \textbf{(1)} $M=M_0 \cup D_0^{m}$ \textbf{(2)} $\chi(M)$ \textbf{(3)} $M \# ({S^{1}\times S^{m-1}})$ \textbf{(4)} Homeomorphic representation of $M \# ({S^{1}\times S^{m-1}})$ }
\label{3D0_TopoChange}
\end{center}
\end{figure}

\smallbreak
\begin{figure}[ht!]
\begin{center}
\includegraphics[width=15cm]{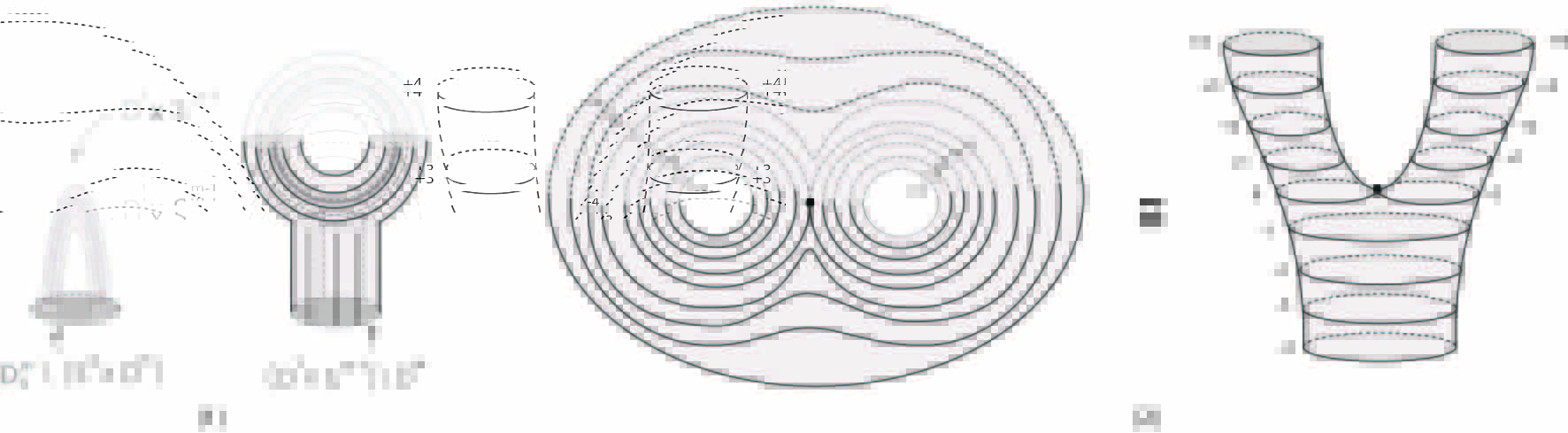}
 \caption{ \textbf{(1)} Removing $(D^{1}\times S^{m-1})$  \textbf{(2)} Homeomorphic representations of $D_0^{m}\setminus (S^0\times D^{m})$ }
\label{3D0_MorseConcat}
\end{center}
\end{figure}

Now, if we consider $M_0$ and $D_0^{m}$ such that $M=M_0 \cup D_0^{m}$, see Fig.~\ref{3D0_TopoChange}~(1), with $D_0^{m}$ containing $h(S^0\times D^{m})$, see Fig.~\ref{3D0_TopoChange}~(2), then $\chi(M) = \overline{M\setminus h(S^0\times D^{m})} \cup_{h} (D^{1}\times S^{m-1})= \overline{M_0 \cup (D_0^{m} \setminus h(S^0\times D^{m}))} \cup_{h} (D^{1}\times S^{m-1}) = \overline{M_0} \cup \overline{(D_0^{m} \setminus h(S^0\times D^{m}))} \cup_{h} (D^{1}\times S^{m-1})$, which, using $(\star)$, gives  $\chi(M) \cong  \overline{M_0} \cup \overline{({S^{1}\times S^{m-1}}\setminus D_0^{m})} = \overline{M \setminus{D_0^{m}}} \cup \overline{({S^{1}\times S^{m-1}} \setminus D_0^{m})}$. Notice now that elongating  the punctured ${S^{1}\times S^{m-1}} $ by a tube $D^{1}\times S^{m-1}$ results in a homeomorphic manifold:  ${S^{1}\times S^{m-1}} \setminus D_0^{m}  \cong (D^{1}\times S^{m-1})   \cup ({S^{1}\times S^{m-1}}\setminus D_0^{m})$, see the passage of  Fig.~\ref{3D0_TopoChange}~(3) to Fig.~\ref{3D0_TopoChange}~(4). Hence, $\chi(M) \cong  \overline{M \setminus{D_0^{m}}} \cup  (D^{1}\times S^{m-1})   \cup \overline{({S^{1}\times S^{m-1}}\setminus D_0^{m})}= M \# ({S^{1}\times S^{m-1}})$.
\end{proof}

Let now $M$ be an arbitrary $3$-manifold. The process of $3$-dimensional $0$-surgery on $M$ is analogous to the process described in Section~\ref{3D0S3} for $S^3$. By Proposition~\ref{3d0S1S2}, the effect of $3$-dimensional $0$-surgery on $M$ is homeomorphic to connecting $M$ and the lens space ${S^{1}\times S^2}$ by a higher dimensional tube ${D^1 \times S^{2}}$. Recall Fig.~\ref{3D0_TopoChange} where  all manifolds are shown one dimension lower.

\subsubsection{Fundamental group}\label{Fundamen3d0}
Another way of characterizing the effect of $m$-dimensional $0$-surgery on an $m$-manifold $M$ is by determining the fundamental group of the resulting manifold. The fundamental group records basic information about a manifold and is a topological invariant: homeomorphic manifolds have the same fundamental group. For details on the fundamental group see Appendix~\ref{Fundappendix}. The fundamental group of $\chi(M)$ can be characterized using the following lemma which is a consequence of the  Seifert–van Kampen theorem (see for example~\cite{Munkres}):

\begin{lem} \label{fundalem3d0} \rm  Let $m \geq 3$. Then the fundamental group of a connected sum is the free product of the fundamental groups of the components: 
$$ \pi_1(M \# M')\cong \pi_1(M) * \pi_1(M')$$
\end{lem}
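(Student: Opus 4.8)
The plan is to apply the Seifert--van Kampen theorem to the standard decomposition of the connected sum $M\#M'$ given in Definition~\ref{ConnSum}. Recall that $M\#M' = \overline{(M\setminus D^m)}\cup (D^1\times S^{m-1})\cup \overline{(M'\setminus D^m)}$, glued along the two copies of $S^{m-1}$ bounding the tube. I would set $U$ to be an open neighborhood of $\overline{(M\setminus D^m)}$ obtained by thickening it slightly into the tube $D^1\times S^{m-1}$, and $V$ to be an open neighborhood of $\overline{(M'\setminus D^m)}$ obtained symmetrically; then $U\cup V = M\#M'$ and $U\cap V$ deformation retracts onto the central sphere $S^{m-1}$ (or onto a slightly fattened $S^{m-1}\times(-\epsilon,\epsilon)$, which is homotopy equivalent to $S^{m-1}$).

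The key computational inputs are: first, that $U$ deformation retracts onto $\overline{(M\setminus D^m)}$, which in turn is homotopy equivalent to $M$ minus a point, and hence, since $m\geq 3$, has $\pi_1(U)\cong\pi_1(M)$ — here one uses that removing a point (equivalently, an open $m$-disc) from an $m$-manifold with $m\geq 3$ does not change the fundamental group, because any loop and any null-homotopy can be pushed off a codimension-$m\geq 3$ point by general position; similarly $\pi_1(V)\cong\pi_1(M')$. Second, that $U\cap V$ is connected and simply connected: since $m\geq 3$, $\pi_1(S^{m-1})$ is trivial. With these in hand, Seifert--van Kampen gives
$$\pi_1(M\#M')\cong \pi_1(U)*_{\pi_1(U\cap V)}\pi_1(V)\cong \pi_1(M)*_{\{1\}}\pi_1(M')\cong \pi_1(M)*\pi_1(M'),$$
since amalgamation over the trivial group is just the free product. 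One should also note a basepoint can be chosen in $U\cap V$, which is nonempty and path-connected, so the hypotheses of Seifert--van Kampen are met.

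I expect the main obstacle — or rather the main point requiring care rather than genuine difficulty — to be the justification that $\pi_1\big(\overline{(M\setminus D^m)}\big)\cong\pi_1(M)$ for $m\geq 3$; this is exactly where the hypothesis $m\geq 3$ enters, and it is the reason the lemma fails for $m=2$ (where removing a disc from, say, $S^2$ changes nothing, but removing a disc from a surface of positive genus does change $\pi_1$, and the connected sum formula indeed breaks down — $\pi_1(T^2\#T^2)$ is not the free product). The cleanest way to handle this is to invoke transversality/general position: a loop in $M$ based away from the removed disc can be homotoped off the disc, and likewise a disc filling such a loop meets the center point of the removed $D^m$ in a set that can be made empty by a small perturbation, since a generic map from a $2$-complex into an $m$-manifold with $m\geq 3$ misses any prescribed point. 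The remaining steps — constructing the open cover, identifying the deformation retracts, and reading off the pushout — are routine and I would state them without belaboring the details.
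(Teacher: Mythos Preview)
Your proposal is correct and follows exactly the approach the paper indicates: the paper does not give a proof of this lemma but simply states it as a consequence of the Seifert--van Kampen theorem with a reference to Munkres. Your argument is the standard Seifert--van Kampen computation, with the role of the hypothesis $m\geq 3$ correctly isolated in both the simple connectivity of $S^{m-1}$ and the fact that removing a disc does not change $\pi_1$.
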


Based on the above, a $3$-dimensional $0$-surgery on $M$ alters its fundamental group as follows: $\pi_1(\chi(M)) \cong \pi_1(M \# ({S^{1}\times S^2})) \cong \pi_1(M) * \pi_1({S^{1}\times S^2})\cong \pi_1(M) * ( \pi_1{(S^{1})} \times \pi_1{(S^{2})}) \cong \pi_1(M) * \mathbb{Z}$.

\subsection{$3$-dimensional $1$-surgery}\label{3D1Topo}
In Section~\ref{3D1S3}, we present the process of  $3$-dimensional $1$-surgery on $M=S^3$ when a trivial embedding is used. Then, in Section~\ref{3D1Knot}, we introduce the notion of `knot surgery', which also includes non-trivial embeddings, and we present the Lickorish-Wallace theorem stating that knot surgery can create all closed, connected, orientable $3$-manifolds. Finally, in Section~\ref{Fundamen3d1}, we characterize the effect of knot surgery on $M=S^3$ by determining the fundamental group of the resulting manifold.

\subsubsection{$3$-dimensional $1$-surgery in \texorpdfstring{$S^3$}{S3}}\label{3D1S3}
In Section~\ref{Types3D}, we described briefly the mechanism of $3$-dimensional $1$-surgery. Let us now recall that the $3$-sphere $S^3$ can be obtained as the union of two solid tori,  $S^3 = V_1\cup_\theta  V_1\textsubscript{c}$, where $V_1\textsubscript{c}$ stands for the complement of $V_1$ and $\theta$ is the standard torus homeomorphism  along the common boundary mapping each longitude (respectively meridian) of $V_1$ to a meridian  (respectively longitude) of $V_1\textsubscript{c}$. A visualization of both solid tori in $\mathbb{R}^3$ using the stereographic projection can be found in Appendix \ref{2D3DStereo}, see Fig.~\ref{3D_Decomp_F}~({3\textsubscript{a}}) or Fig.~\ref{3D_Decomp_F}~({3\textsubscript{b}}).
This decomposition is clearly very helpful for examining the effect of $3$-dimensional $1$-surgery on $S^3$ for the case of a trivial embedding $h_s$ of $V_1$. Namely, the complement solid torus $V_1\textsubscript{c}$ remains identically fixed throughout the process while $V_1$ is replaced by a solid torus $V_2$ with the factors reversed via a homeomorphism $h_s$ from the boundary of $V_1$ to the boundary of $V_2$.

To avoid confusion and keep the color coding consistent with previous sections, the solid tori $V_1$ and $V_2$ will be considered as the initial and final instances of the local process of surgery (keeping the respective red-green color coding of their core curves) while the complement torus of $V_1$ in $S^3$ will be $V_1\textsubscript{c}$ and its core curve will be shown in blue. The initial manifold $M=S^3$ can be seen in Fig.~\ref{3D_S3_S2S1}~(1) where the curved vectors in grey represent `gluing along common boundary'. We will consider Fig.~\ref{3D_S3_S2S1}~(1) as the initial setup for $3$-dimensional $1$-surgery on $S^3$. 

\smallbreak
\begin{figure}[ht!]
\begin{center}
\includegraphics[width=16cm]{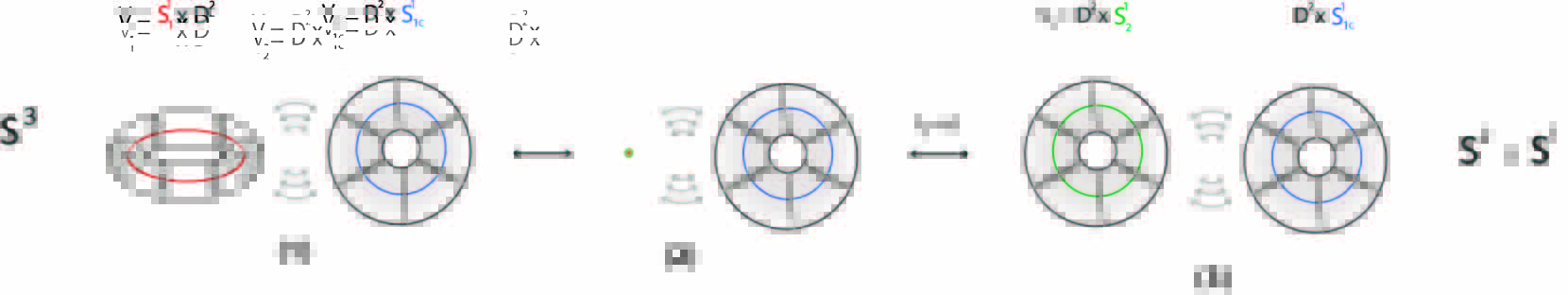}
\caption{ \textbf{(1)} $S^3 = V_1\cup V_1\textsubscript{c}$ \textbf{(2)} $V_1$ collapses  \textbf{(3)} $V_2$ emerges}
\label{3D_S3_S2S1}
\end{center}
\end{figure}

One key difference compared to $3$-dimensional $0$-surgery where the embedding of the core $S^0$ of $S^0 \times D^3$ didn't influence the resulting manifold is that, here, the higher dimension of the core $S^1$ allows for knotted embeddings of the solid torus $S^1 \times D^2$. As we will see, this knotting plays a crucial role in the result of surgery. We will start by discussing the trivial embedding in this section and then introduce the notion of `knot surgery' in Section~\ref{3D1Knot}.

When a trivial embedding $h_s$ is used, the embedding $h_s(V_1)=h_s(S_1^1\times D^2)$ corresponds to taking the tubular neighborhood of an unknotted core $S_1^1$ where longitudes $\ell_1$ are parallel to the core. As $S^3 = V_1\cup_{h_s}  V_2$, the induced `gluing' homeomorphism along the common boundary $S_1^1\times S_2^1$ maps each  longitude (respectively meridian) of solid torus $V_1$ to the meridian (respectively longitude) of solid torus $V_2$. Hence, $h_s(\ell_1)=m_2$ and  $h_s(m_1)=\ell_2$. The process of surgery collapses $S_1^1\times D^2$, see Fig.~\ref{3D_S3_S2S1}~(2) and then uncollapses $D^2 \times S_2^1$, see Fig.~\ref{3D_S3_S2S1}~(3). Given
that the solid torus $V_2=D^2 \times S_2^1$ is homeomorphic to  $V_1\textsubscript{c}=D^2 \times S_{1c}^1$ as they are both complements of $V_1$ in $S^3$, the resulting manifold is:

\begin{equation}
\begin{aligned}
\chi(S^3) = \overline{S^3\setminus h_s(S_1^1\times D^2)} \cup_{h_s|_{S_1^1\times S_2^{1}}} (D^2 \times S_2^1) = (D^2 \times S_{1c}^1) \cup_{h_s|_{S^1\times S_2^{1}}} (D^2 \times S_2^1)  \\
\cong (D^2 \times S_2^1) \cup_{h_s|_{S^1\times S_2^{1}}} (D^2 \times S_2^1) = (D^2 \cup_{h_s|_{S^1}} D^2) \times S_2^1 = S^2 \times S_2^1 .
\end{aligned} \nonumber
\end{equation}

\subsubsection{Knot surgery}\label{3D1Knot}
We start with the following fundamental theorem by A.H. Wallace~\cite{Wal} and W. Lickorish~\cite{LickTh}:

\begin{thm} [\cite{Wal} Thm 6, \cite{LickTh} Thm 2] \label{LickorishWal} \rm Every closed, connected, orientable $3$-manifold can be obtained by surgery on a knot or a link in $S^3$.
\end{thm}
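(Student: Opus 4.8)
\textbf{Proof proposal for the Lickorish--Wallace theorem (Theorem~\ref{LickorishWal}).}

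The plan is to prove the theorem in two stages, following the classical argument. First I would reduce the statement to a claim about Heegaard splittings, and then reduce that claim to the fact that the mapping class group of a closed orientable surface is generated by Dehn twists. Let $M$ be a closed, connected, orientable $3$-manifold. By the existence of Heegaard splittings (every such $M$ admits one, e.g.\ from a triangulation or a self-indexing Morse function), we may write $M = H_g \cup_\phi H_g'$, where $H_g$ and $H_g'$ are two copies of a genus-$g$ handlebody glued along their boundary surface $\Sigma_g$ by an orientation-reversing homeomorphism $\phi\colon \partial H_g' \to \partial H_g$. The $3$-sphere itself has such a splitting $S^3 = H_g \cup_{\phi_0} H_g'$ for every $g$ (stabilize the genus-$0$ splitting), with gluing map $\phi_0$. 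The key observation is that $M$ is obtained from $S^3$ by cutting along $\Sigma_g = \partial H_g$, regluing $H_g'$ after precomposing with the homeomorphism $j = \phi_0^{-1}\circ\phi\colon \Sigma_g \to \Sigma_g$; so $M$ depends only on the isotopy class of $j$ in the mapping class group $\mathrm{MCG}(\Sigma_g)$.

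Second, I would invoke the Lickorish--Humphries generation theorem: $\mathrm{MCG}(\Sigma_g)$ is generated by Dehn twists along a finite collection of simple closed curves on $\Sigma_g$. Hence $j = \tau_{c_k}^{\pm1}\circ\cdots\circ\tau_{c_1}^{\pm1}$ for simple closed curves $c_1,\dots,c_k \subset \Sigma_g$. Now push each $c_i$ slightly into the interior of $H_g$ (staggering them at different ``depths'' so that $c_1,\dots,c_k$ become a framed link $L\subset H_g \subset S^3$), and equip $c_i$ with the framing coming from the surface $\Sigma_g$, with sign $\pm1$ matching the exponent of the twist. The standard fact relating surface Dehn twists to $3$-dimensional $1$-surgery --- which in the language of Section~\ref{Types3D} is exactly the operation removing a solid torus $S^1\times D^2$ and regluing $D^2\times S^1$ with a twisted homeomorphism --- says that performing $\pm1$-framed surgery on the component $c_i$ has the effect of regluing the Heegaard splitting along the extra twist $\tau_{c_i}^{\pm1}$. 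Doing this for all components of $L$ realizes the composite $j$, and therefore $\chi_L(S^3) \cong M$. Since $L$ is a link (a knot when $k=1$) in $S^3$, this is precisely surgery on a link in $S^3$, proving the theorem.

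The main obstacle, and the place where I would spend the most care, is the geometric lemma identifying a $\pm1$-framed surgery on a curve $c\subset \Sigma \subset S^3$ pushed off the Heegaard surface with the regluing of the splitting by the Dehn twist $\tau_c^{\pm1}$. Concretely, one must check that removing a tubular neighborhood $N(c)$ and regluing $D^2\times S^1$ by the appropriate boundary homeomorphism has the same effect on the ambient manifold as cutting $S^3$ along $\Sigma$ and modifying the gluing map by $\tau_c^{\pm 1}$; this requires tracking meridians and longitudes (the $\ell_i, m_i$ bookkeeping of Section~\ref{3D1S3}) and verifying the framing convention is consistent. The other nontrivial input is the Lickorish--Humphries generation of $\mathrm{MCG}(\Sigma_g)$ by twists, which I would cite rather than prove. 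Everything else --- existence of Heegaard splittings, the identification of $M$ with a regluing of a standard splitting of $S^3$, and the staggering of the curves into a genuine link --- is routine.
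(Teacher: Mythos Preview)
Your argument is the classical Lickorish proof and is correct as outlined: Heegaard splitting, factorization of the gluing homeomorphism into Dehn twists via Lickorish's generation theorem for $\mathrm{MCG}(\Sigma_g)$, and the identification of a $\pm1$-framed surgery on a push-off of $c\subset\Sigma_g$ with regluing by $\tau_c^{\pm1}$. You have also correctly flagged the one step that requires genuine care, namely the surgery/Dehn-twist correspondence and its framing bookkeeping.

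There is, however, nothing in the paper to compare it to. The paper does not prove Theorem~\ref{LickorishWal}; it is stated with attribution to Wallace and Lickorish and the reader is referred to \cite{LickTh} and \cite{PS} for details. So your proposal is not ``the same approach as the paper'' nor ``a different route'' --- it simply supplies a proof where the paper gives none. For the record, your outline is precisely Lickorish's original argument (Wallace's proof proceeds instead through cobordism/handle theory), so if you want to match one of the cited sources, you are following \cite{LickTh}.
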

{\noindent} Let us mention that a knot $K$ is an embedding of $S^1$ in  $\mathbb{R}^3$ or $S^3$ while a link is a collection of knots which do not intersect, but which may be linked (or knotted) together. It can be shown that this theorem is equivalent to saying that, \textit{starting with $M=S^3$, we can create every closed, connected, orientable $3$-manifold by performing a finite sequence of $3$-dimensional $1$-surgeries}, see~\cite{LickTh} or~\cite{PS} for details.

In this type of surgery, the role of the embedding is crucial. When using the standard embedding $h_{s}$, the core $S_1^1$ and the longitude $\ell_1$ of the removed solid torus $V_1$ are both trivial loops (or unknotted circles) and $3$-dimensional $1$-surgery generates a restricted family of $3$-manifolds. Indeed, starting from $S^3$, standard  embeddings $h_{s}$ can only produce $S^2 \times S^1$ or connected sums of $S^2 \times S^1$ while more complicated $3$-manifolds require using a non-trivial embedding $h$, where the core curve and the longitude of the removed solid torus $h(S_1^1\times D^2)$ can be knotted. One such $3$-manifold is the   Poincar\'{e}   homology sphere which is obtained by doing surgery on the trefoil knot with the right framing, see Fig.~\ref{TrefoilP}. For the definition the blackboard framing of a knot, see Appendix~\ref{Blackboard}. For details on the Poincar\'{e}   homology sphere, see Appendix~\ref{surgeryonK}.


Hence the process of $3$-dimensional $1$-surgery can be also described in terms of knots. We will call this process `knot surgery' in order to differentiate it from the process of $3$-dimensional $1$-surgery where  $h_{s}$ is used. Here, we can view the embedding $h(V_1)=h(S_1^1\times D^2)$ as a tubular neighbourhood $N(K)$ of knot $K$: $N(K)=K\times D^2=h(S_1^1\times D^2)$. The knot $K=h(S_1^1\times \{ 0 \})$ is the surgery curve at the core of solid torus $N(K)=h(S_1^1 \times D^2)$. On the boundary of $N(K)$, we further define the \textit{framing longitude} $\lambda \subset \partial N(K)$ with $ \lambda=h(S_1^1 \times \{ 1 \} )$, which is a parallel curve of $K$ on $\partial N(K)$, and the meridian $m_1 \subset \partial N(K)$ which bounds a disk of solid torus $N(K)$ and intersects the core $K$ transversely in a single point. 
 
A \textit{`knot surgery'} (or `framed surgery') along $K$ with framing $\lambda$ on a manifold $M$ is the process whereby $N(K)=h(V_1)$ is removed from $M$ and $V_2=D^2 \times S_2^1$ is glued along the common boundary. The interchange of factors of the `gluing' homeomorphism $h$ along $S_1^1 \times S_2^1$ can now be written as  $h(\lambda)=m_2$ and  $h(m_1)=l_2$.

Unlike the case of the standard embedding $h_{s}$ discussed in Section~\ref{3D1S3},  
the possible knottedness of $h$ makes this process harder to visualize. However, the manifold resulting from knot surgery can be understood by determining its fundamental group. This is done in next section.

\subsubsection{Fundamental group}\label{Fundamen3d1}
In this section, we present the theorem which characterizes the effect of knot surgery on $M=S^3$ by determining the fundamental group of the resulting manifold. We then apply it on the simple case of framed surgery along an unknotted surgery curve.

The fundamental group of the $3$-sphere $S^3$ is trivial, as any loop on it can be continuously shrunk to a point without leaving $S^3$. To examine how knot surgery alters the trivial fundamental group of $S^3$, let us consider the tubular neighborhood $N(K)$ of knot $K$. The generators of the group of $\partial N(K)$ are the longitudinal curve $\lambda$ and the meridional curve $m_1$. Note now that in $V_1=N(K)$ meridional curves bound discs while it is the specified framing longitudinal curve $\lambda$ that bounds a disc in $V_2=D^2 \times S_2^1$, since,  after surgery, the disc bounded by $m_2$ is now filling the longitude $\lambda$. Thus, $\lambda$ is made trivial in the fundamental group of $\chi_K(S^3)$. In this sense, surgery collapses $\lambda$. This statement is made precise by the following theorem which is a consequence of the Seifert-{}-van Kampen theorem (see for example~\cite{Munkres}): 
 
\begin{thm} \label{3d1long} \rm  Let $K$ be a blackboard framed knot with longitude $\lambda \in \pi_1(S^3 \setminus N(K))$. Let  $\chi_{\mbox{\tiny K}}(S^3)$ denote the $3$-manifold obtained by surgery on $K$ with framing longitude $\lambda$.  Then we have the isomorphism:
 $$  \pi_1(\chi_{\mbox{\tiny K}}(S^3)) \cong \frac{\pi_1(S^3 \setminus N(K))}{<\lambda>} $$
where $<\lambda>$ denotes the normal subgroup generated by $\lambda \in \pi_1(S^3 \setminus N(K))$.
\end{thm}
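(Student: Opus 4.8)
The plan is to realize $\chi_{\mbox{\tiny K}}(S^3)$ as a union of two spaces glued along a torus and apply the Seifert--van Kampen theorem, exactly as in the setup preceding the statement. Write $\chi_{\mbox{\tiny K}}(S^3) = \overline{S^3 \setminus N(K)} \cup_{\varphi} V_2$, where $V_2 = D^2 \times S_2^1$ is the solid torus glued back in along the `gluing' homeomorphism $h$ of the common boundary torus $T = \partial N(K) = S_1^1 \times S_2^1$, with $h(\lambda) = m_2$ and $h(m_1) = \ell_2$. To make van Kampen applicable with open sets, first thicken slightly: let $A$ be an open neighborhood of $\overline{S^3 \setminus N(K)}$ that deformation retracts onto it, let $B$ be an open neighborhood of $V_2$ that deformation retracts onto $V_2$, and arrange $A \cap B$ to deformation retract onto the torus $T$. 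All three spaces are path-connected and we choose a basepoint on $T$.

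Next I would record the three fundamental groups. We have $\pi_1(A) \cong \pi_1(S^3 \setminus N(K))$, the knot group (which I leave abstract, as the theorem does). We have $\pi_1(B) \cong \pi_1(V_2) \cong \mathbb{Z}$, generated by the core circle $S_2^1$; under the inclusion $T \hookrightarrow V_2$, the meridian $m_2$ of $V_2$ (which bounds the disc $D^2 \times \{\mathrm{pt}\}$) maps to $1$, while the longitude $\ell_2$ maps to a generator of $\pi_1(V_2)$. And $\pi_1(A \cap B) \cong \pi_1(T) \cong \mathbb{Z} \oplus \mathbb{Z}$, generated by $m_1$ and $\lambda$. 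Then Seifert--van Kampen gives
\begin{equation}
\pi_1(\chi_{\mbox{\tiny K}}(S^3)) \cong \pi_1(A) *_{\pi_1(A\cap B)} \pi_1(B) = \bigl(\pi_1(S^3 \setminus N(K)) * \mathbb{Z}\bigr) \big/ N, \nonumber
\end{equation}
where $N$ is the normal closure of the relations identifying the image of each generator of $\pi_1(T)$ under the two inclusions. The meridian $m_1$ of $N(K)$ maps into $\pi_1(A)$ as the meridian element of the knot group, and into $\pi_1(B)$ via $h(m_1) = \ell_2$, a generator of $\mathbb{Z} = \pi_1(V_2)$; the longitude $\lambda$ maps into $\pi_1(A)$ as the element called $\lambda \in \pi_1(S^3 \setminus N(K))$, and into $\pi_1(B)$ via $h(\lambda) = m_2$, which is trivial in $\pi_1(V_2)$.

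The final step is to simplify. The relation coming from $m_1$ says the $\mathbb{Z}$-generator of $\pi_1(V_2)$ equals the meridian of the knot group, so the free factor $\mathbb{Z}$ is absorbed into $\pi_1(S^3 \setminus N(K))$ and contributes nothing new. The relation coming from $\lambda$ says $\lambda = 1$ in the amalgam, i.e. we kill the normal subgroup $\langle\langle \lambda \rangle\rangle$. Combining, $\pi_1(\chi_{\mbox{\tiny K}}(S^3)) \cong \pi_1(S^3 \setminus N(K)) / \langle \lambda \rangle$, which is the claim. I expect the main obstacle to be purely bookkeeping rather than conceptual: one must be careful that $V_2$ is glued with factors reversed, so that it is precisely the \emph{framing longitude} $\lambda$ (not the meridian $m_1$) that becomes null-homotopic after filling, and one must verify that the other gluing relation genuinely makes the extra $\mathbb{Z}$ redundant rather than imposing an unexpected constraint. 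A secondary point to state carefully is that $\langle \lambda \rangle$ in the conclusion denotes the normal closure of $\lambda$ in $\pi_1(S^3 \setminus N(K))$, which is exactly what the amalgamation produces.
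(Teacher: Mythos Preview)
Your proposal is correct and is precisely the Seifert--van Kampen argument that the paper itself points to: the paper states just before the theorem that it ``is a consequence of the Seifert--van Kampen theorem'' and then, in lieu of a proof, refers the reader to \cite{Kirby,DNA}. Your write-up supplies exactly the standard amalgamated-product computation the paper alludes to (knot exterior $\cup$ solid torus over the boundary torus, with the filling disc killing $\lambda$ and the other relation absorbing the extra $\mathbb{Z}$), so there is nothing to compare --- you have filled in what the paper leaves as a reference.
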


For a proof, the reader is referred to~\cite{Kirby,DNA}. The theorem tells us that in order to obtain the fundamental group of the resulting manifold, we have to factor out $<\lambda>$ from $\pi_1(S^3 \setminus N(K))$. 

\begin{exmp}  \label{ExFramedUnknot}  \rm
When the trivial embedding $h_{s}$ is used, then the `gluing' homeomorphism is $h_{s}(l_1)=m_2$, $K=the \ unknot$, $\lambda=l_1$ and $l_1$ is a trivial element in $\pi_1(S^3 \setminus N(K))$, so $<\lambda>=<0>$. In this case, we obtain the lens space $L(0,1)=S^2 \times S^1$ and the above formula gives us:

\begin{equation}
\begin{aligned}
\pi_1(\chi(S^3))\cong \frac{\pi_1(S^3\setminus h_s(S_1^1\times D^2))}{<\lambda>} = \frac{\pi_1(\mathring{D^2} \times S^1_{1c})}{<0>} = \frac{ \mathbb{Z}}{\{0\}} \cong\mathbb{Z}
\end{aligned} \nonumber
\end{equation}
\end{exmp}

\smallbreak
\begin{exmp}  \label{ExFramedUnknot2}  \rm
When we use a non-trivial embedding $N(K)=h(S_1^1 \times D^2)$ where the specified framing longitude $\lambda$ performs $p$ curls, the `gluing' homeomorphism is $h(\lambda)=m_2$ and we can consider that  $K=the \ unknot$, see Appendix~\ref{Blackboard} for details. In order to use Theorem~\ref{3d1long}, we have to find the subgroup generated by $\lambda=l_1+p \cdot m_1$ in $\pi_1(S^3 \setminus N(K))$. This subgroup is $<\lambda> = <l_1+p \cdot m_1> \cong <p \cdot m_1> \cong p \cdot <m_1>\cong p\mathbb{Z}$. In this case we obtain the lens space $L(p,1)$ and its fundamental group   is the cyclic group of order $p$:

\begin{equation}
\begin{aligned}
\pi_1(\chi(S^3))\cong \frac{\pi_1(S^3\setminus h(S_1^1\times D^2))}{<\lambda>} =\frac{\pi_1(\mathring{D^2} \times S^1_{1c})}{p\mathbb{Z}} =  { \mathbb{Z}}/{p\mathbb{Z}}  
\end{aligned} \nonumber
\end{equation}
\end{exmp}

As we saw in Example~\ref{ExFramedUnknot2}, if $\lambda$ is not a bounding curve in the knot complement, then we need to work out just what element $\lambda$ is in the fundamental group of the knot complement. This can be done by using one of the known presentations of the fundamental group, such as the Wirtinger presentation. A detailed presentation on the fundamental group of a knot $K$ and how we can use this presentation to determine the resulting manifold for knot surgery on $M=S^3$ along $K$ is done in Appendix~\ref{Fundappendix}.

\section{Topological processes of cosmic phenomena}\label{3d0WH}
In this section, we describe cosmic phenomena using topological surgery by  exploring the mathematical setting and developing the ideas presented in essay~\cite{BHsurg}. More precisely, we use $3$-dimensional surgery to analyze the temporal evolution and the topology change occurring during the formation of wormholes and cosmic string  black holes and we connect both of these cosmic phenomena with the $ER=EPR$ hypothesis of L. Susskind and J. Maldacena, see~\cite{ER_EPR_,ER_EPR}. Wormholes and cosmic string black holes are analyzed in Sections~\ref{3d0WH_GC} and~\ref{3d1BHWH} respectively. Significant outcomes of our study include the presentation of a possible entangled quantum state for wormholes, in Section~\ref{WQS}, and the avoidance of the singularity by conjecturing that a new $3$-manifold is created behind the event horizon, in Section~\ref{3d1BHWH_singA}.

In all subsequent sections we consider `space' as being the $3$-dimensional spatial section of the $4$-dimensional spacetime manifold of the universe. More precisely, given some natural definition of time, one can use this time function to slice up the spacetime (at least locally) into a set of hypersurfaces, which might each be thought of as `space'. Let us consider the initial space $M$ as being the $3$-sphere $S^3$ or the $3$-space $\mathbb{R}^3$ or a $3$-manifold corresponding to the aforementioned $3$-dimensional spatial section, and suppose that a cosmic phenomenon induces a topological change transforming $M$ into $M'$. Then, the $4$-dimensional spacetime manifold with past boundary the spacelike component $M$ and future spacelike boundary  $M'$ coincides with the $4$-dimensional cobordism $W$ bounded by $M$ on one end and $M'$ on the other. If this topological change is surgery then $M'=\chi(M)$ and the cobodism $W$ describes the global process of surgery as detailed in Section~\ref{global}. Moreover, as also explained in Section~\ref{global} and illustrated in Fig.~\ref{Cobordism2}, the temporal `slices' of the global process of surgery are  perpendicular crossections of the cobordism $W$.  

\subsection{Wormholes}\label{3d0WH_GC}
Einstein and Rosen~\cite{ERB} introduced in 1935 what would be called the `Einstein-Rosen bridge' as a possible geometric model that avoided singularities via a coordinate change of the Einstein field equations. This `bridge' evolved to the modern term `wormhole' introduced by Wheeler in~\cite{MisWhee}. Since then, a great variety of wormholes have been considered by the physics community.

From a geometrical point of view, Wheeler's diagram of a wormhole in~\cite{MisWhee} is a tunnel connecting two mouths. As he mentions, the number of space dimensions have been reduced from three to two, hence his diagram depicts the $3$-dimensional tunnel $D^1 \times S^2$ by a $2$-dimensional cylinder $D^1 \times S^1$ joining the two mouths. Similar representations are found in subsequent works~\cite{EmWorm,Inter}, where circular crossections along a cylinder represent $2$-spheres. For the purpose of our work, we consider that a wormhole is a higher dimensional tunnel $D^1 \times S^2$ joining two spherical regions of space. With this consideration at hand, we provide a topological description of wormhole formation and present a novel perspective on  its association with entanglement. See Sections~\ref{Ex_ER_EPR} and~\ref{WQS}.

More precisely, in Section~\ref{3d0WH_GC_Form} we describe wormhole formation via $3$-dimensional $0$-surgery and we pin down the core topology of this process, which can be seen independently of the physical theories of its formation. In Section~\ref{Ex_ER_EPR} we use our description in the context of the $ER=EPR$ hypothesis to view wormholes as a continuous process resulting from two entangled black holes. In Section~\ref{WQS} we present a way to associate a possibly entangled state with a wormhole.

\subsubsection{The topological process of wormhole formation}\label{3d0WH_GC_Form}
If one considers an initial $3$-manifold $M$ corresponding to space (as previously defined), then a wormhole joins the $3$-dimensional neighborhoods $S^0 \times D^3$  of two points in space via a tunnel $D^1 \times S^2$, as sketched in instance~(e) of Fig.~\ref{3D_ThickenedCores}~(2). This is, by definition, the effect of a $3$-dimensional $0$-surgery on $M$. Recall from Section~\ref{4Doutline} that the higher dimensional merging and recoupling which  produces the wormhole is not visible from the $3$-space $M$. For instance, let us consider a `mathematical' observer living in $M$, who is not subject to the restrictions of physical laws. The only difference for him is that, after surgery, he can exit from any point on the boundary of one $3$-ball and re-emerge from any point on the boundary of the other $3$-ball.

As also mentioned in Section~\ref{4Doutline}, this tunnel is a higher dimensional analogue of the cylinder seen during the formation of Falaco solitons where the $2$-dimensional neighborhoods $S^0 \times D^2$ of the two indentations are joined by the cylindrical vortex $D^1 \times S^1$. In fact, a possible connection between Falaco solitons and wormholes has already been mentioned by R.M. Kiehn. Namely, in~\cite{KiehnSmall} he conjectures that `the universal coherent topological features of the Falaco solitons can appear as cosmological realizations of Wheeler’s wormholes'. Our surgery description reinforces this connection.

\smallbreak
\begin{figure}[ht!]
\begin{center}
\includegraphics[width=15cm]{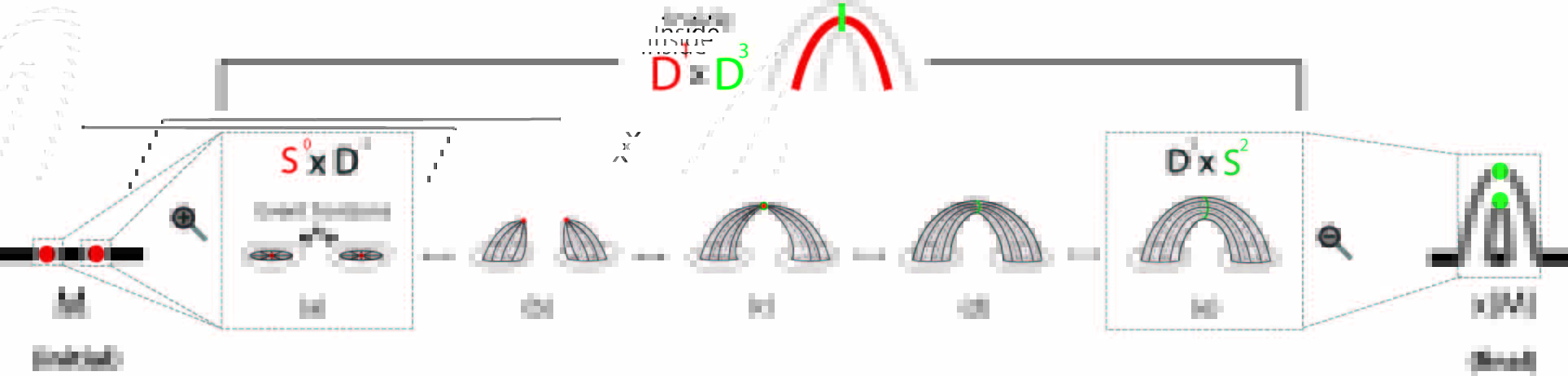}
\caption{Entangled black holes connected by a wormhole}
\label{WH_BH}
\end{center}
\end{figure}

Further, let us point out that the formation of certain wormholes are followed by their annihilation. For example, the dynamical evolution of the Schwarzschild wormhole starts with two singularities annihilating each other, thus creating the wormhole. The wormhole then grows in circumferences until its maximum size is reached, from which the wormhole starts contracting until it pinches off by creating two other singularities, see~\cite{Thorne}. The numerical calculations done in~\cite{EmWorm} show that this process is the same as the creation of a pair of Falaco solitons followed by their annihilation. Indeed the simulations of~\cite{EmWorm} amalgamate the instances of Fig.~\ref{3D_ThickenedCores}~(1) from left to right followed by the reverse process, which is made up of the instances of Fig.~\ref{3D_ThickenedCores}~(1) from right to left. Although we focus on wormhole formation, the topological process of their annihilation can be seen as the reverse process of their formation.  
\smallbreak
\textit{Viewing wormholes as the result of a $3$-dimensional $0$-surgery on $M$, allows us to apply the topological tools developed in previous sections, thus providing a simpler dynamical description in terms of hypersurfaces, which is coupled with a topological characterization of the resulting manifold.}
\smallbreak
Namely, as analyzed in Section~\ref{global}, the instances of the global process of this topological change from the initial manifold $M$ to the resulting manifold $\chi(M)$ make the spatial $4$-dimensional cobordism $W$ obtained by attaching a handle $D^1\times D^3$ to $M \times I$. The effect of this topological change on space $M$ can be characterized by determining the fundamental group of the resulting manifold $\chi(M)$ as shown in Section~\ref{Fundamen3d0}. Moreover, the global change of topology occuring during wormhole formation can now be also considered as a result of a continuous topological change of $3$-space. Namely, as seen in Section~\ref{3dMorse}, the local changes of surgery can be algebraically described by the hypersurfaces resulting from the local form of the corresponding Morse function. Further, the Morse function can be seen as a potential energy function whose gradient field controls the topological evolution of the surgery, recall Section~\ref{3DTemp}.

Following the core description of Section~\ref{3DTemp}, we can now think of a wormhole as a topological change starting with two sites in space (an $S^{0}$) which collapse to one site (the singular point) and re-emerge as a sphere $S^{2}$ (the core of the wormhole), see Fig.~\ref{3D_Cores}~(1). Inversely, if the core $S^2$ of a wormhole collapses then the handle (the wormhole itself) is removed and  we receive a new manifold with two special sites $S^0$. 

Note finally that wormhole formation can be viewed in the context of different physical theories. For instance, according to J.A. Wheeler, wormholes can be seen as resulting from quantum fluctuations at the Planck scale~\cite{Whee_QF}. Further, they can be seen as a result of entanglement~\cite{ER_EPR_,ER_EPR}. Our perspective describes the core topology of wormholes, independently of the physical theory in which it is viewed. In the next section we will see how the core description applied to the $ER=EPR$ hypothesis~\cite{ER_EPR_,ER_EPR} can provide a `classical path' to this quantum perspective.

\subsubsection{Wormholes as entangled black holes}\label{Ex_ER_EPR}
Our topological perspective may shed light on certain suggestions about quantum gravity and black holes. Specifically, we consider the $ER=EPR$ hypothesis, see~\cite{ER_EPR_,ER_EPR}, which states that an Einstein-Rosen bridge (that is, a wormhole) is equivalent to the quantum entanglement of two concentrated masses that each forms a respective black hole. This entanglement implies, by $ER=EPR$,  that the two black holes will not collapse individually, but rather form a single wormhole. The connectivity of the wormhole is, according to $ER=EPR$, a consequence of the quantum entanglement of the masses prior to the wormhole formation. See Section~\ref{WQS} for a specific discussion of this point.

Applying our description to the $ER=EPR$ hypothesis leads to conjecturing a classical counterpart to the formation of such a wormhole. In `classical' surgery  description the two sites in space (the $S^{0}$) are the centers of formation for the black holes that then become the core $S^2$ of the wormhole.

More precisely, the process starts with Fig.~\ref{WH_BH}~(initial) and ends with Fig.~\ref{WH_BH}~(final), where we show $1$-dimensional analogues of the $3$-dimensional instances ($\chi(M)$ is shown as the result of a $1$-dimensional $0$-surgery on line $M$ while $S^0 \times D^3$, $D^1 \times S^2$ are shown as  $S^0\times D^1$, $D^1 \times S^0$). In instances~(a) to~(e) of Fig.~\ref{WH_BH}, we zoom in the region where the local process of surgery happens and present $2$-dimensional analogues of the $3$-dimensional instances ($S^0 \times D^3$ is shown as  $S^0\times D^2$ and $D^1 \times S^2$ is shown as $D^1 \times S^1$).

In this scenario, as surgery happens within the event horizons of the black holes, the thickenings $D^3$ are inside the Schwarzschild radii of the black holes, see instance~(a) of Fig.~\ref{WH_BH}. In fact, the whole handle  $D^1\times D^3$ (see the upper part of Fig.~\ref{WH_BH} for its $1$-dimensional analogue), that contains all instances of the process, is within the event horizons of the black holes. The process brings the two black holes together to form a wormhole where their singularities $S^{0}$ have been transformed to the core $S^{2}$ of the wormhole, see instances~(b) to~(e) of Fig.~\ref{WH_BH}.  

Note that a quantum description of the formation of such a wormhole would directly pass from the initial instance at the beginning of the black hole formation to the final instance of the wormhole. Here however, $3$-dimensional $0$-surgery gives  a continuous description of the creation of this entangled pair of black holes forming the wormhole. This could be regarded as a possible classical path from the initial event to the wormhole. Inversely, the collapsing of the core of a wormhole can be seen as the disentanglement of the black hole pair.

\subsubsection{A possible entangled quantum state for wormholes}\label{WQS}
In this section, we present a way to associate a possibly entangled state with a wormhole that is coherently related to the $ER=EPR$ hypothesis. Recall that a \textit{cobordism} between two manifolds $M$ and $M'$ is a manifold $W$ of one higher dimension such that the boundary of $W$ is the union of $M$ and $M'.$ If $M'$ is empty, then we say that $W$ is a cobordism of $M$ to the empty manifold and, of course, this simply means that the boundary of $W$ is $M.$
View Figure~\ref{worm}. We illustrate a wormhole as a cobordism between an empty manifold and two spheres, drawn as circles in the figure.
For a spacetime wormhole, the spheres would each be two-dimensional (forming the event horizons of two black holes). This view of a wormhole fits precisely with the surgery description for the wormhole that we have given in this paper. 

\smallbreak
\begin{figure}[ht!]
\begin{center}
\includegraphics[width=3.5cm]{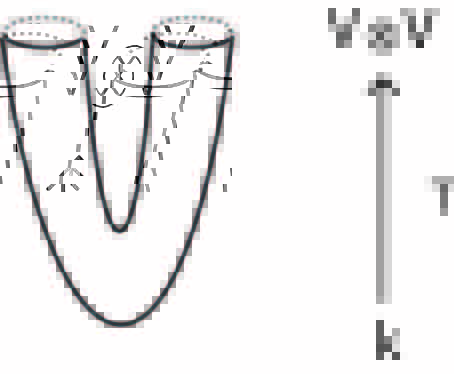}
\caption{The quantum state of a wormhole}
\label{worm}
\end{center}
\end{figure}

In Topological Quantum Field Theory one considers functors from the category of manifolds (as objects) and cobordisms (as morphisms) to the category of vector spaces and linear transformations. In this point of view a wormhole as in Figure~\ref{worm} would be sent by the functor to a linear mapping $$T: k \longrightarrow V \otimes V$$ where the two-sphere $S^2$ (depicted as a circle in the figure) maps to $V$, the disjoint union of the two two-spheres maps to $V \otimes V,$ and the empty object maps to the ground field $k.$
\smallbreak
\textit{Here $T$ is the map corresponding to the wormhole itself. With this point of view, we can see how an entangled quantum state can be associated with a wormhole.} 
\smallbreak
The possible state would occur with $k = \mathbb{C}$, the complex numbers, and $V$ a finite dimensional complex vector space associated with the two-sphere. Then $T(1)$ is a vector in the tensor product $V \otimes V,$ and is a possibly entangled quantum state to be associated with the wormhole. It remains to be seen whether properties of the wormhole resulting from the formation and amalgamation of two black holes imply the existence of such an entangled state. Nevertheless, the surgery picture of the wormhole as a cobordism is fundamental for this investigation. The possibly entangled state $T(1)$ can be interpreted as an element of the tensor product of Hilbert spaces associated with each black hole (represented by their respective event horizons). Thus this viewpoint also provides a framework in which to discuss the L. Susskind and J. Maldacena principle that quantum entanglement of two black holes should correspond to a wormhole that they form together. Here $T(1)$ would represent the quantum entanglement of the black holes. 
\smallbreak
At this writing we do not know a general condition in the spacetime manifold that would imply the entanglement of the state $T(1).$ For more information about topological quantum field theory, quantum entanglement and its relation to wormholes, see~\cite{WHE,EMLK,ER_EPR_,WITT}.

\subsection{Cosmic string black holes}\label{3d1BHWH}
Cosmic strings are hypothetical topological defects which may have formed in the early universe and are predicted by both quantum field theory and string theory models. Their existence was first contemplated by Tom Kibble~\cite{Kibble} in the 1970s. Then, in \cite{Hawk} S.W. Hawking estimated that a fraction of cosmic string loops can collapse to a small size inside their Schwarzschild radius, thus forming a black hole. As he mentions, under certain conditions, `one would expect an event horizon to form, and the loop to disappear into a black hole'.  We will call such black holes  `cosmic string black holes'. 

In Section~\ref{3d1BHWH_singA} we describe the formation of cosmic string black holes via $3$-dimensional $1$-surgery and present how this description proposes a conjecture resulting in the creation of a non-singular $3$-space. In Section~\ref{Ex_Poincare}, we examine the possible $3$-manifolds than can occur if such processes are followed, we focus on the example of the Poincaré dodecahedral space and discuss possible implications of observing such $3$-manifolds in our universe. In Section~\ref{StringBH} we use our description in the context of the $ER=EPR$ hypothesis, to present how the example of Section~\ref{Ex_ER_EPR} can be generalized to a cosmic string of entangled black holes forming a wormhole.

\subsubsection{Using topological surgery to avoid singularities}\label{3d1BHWH_singA}

Except from S.W. Hawking's original estimation in~\cite{Hawk}, other estimations of the fraction of cosmic string loops which collapse to form black holes have been made in subsequent works, see \cite{Cald} and \cite{McGibb}. While the details of the different estimations have no direct implications on this analysis, it is worth mentioning the following two statements. In \cite{Cald}, R.R. Caldwell and P. Casper point out that the loop `collapses in all three directions' and in  \cite{McGibb}, J.H. MacGibbon, R.H. Brandenberger and U.F. Wichosk give the following example for a collapsing symmetric string loop:  `For example, a planar circular string loop after a quarter period will collapse to a point and hence form a black hole.' 

Topologically, the aforementioned loop can be considered to be a solid torus $S^1 \times D^2$ embedded in the $3$-space $M$. The thickening $D^2$ can be considered to be very small, as the diameter of a cosmic strings is of the same order of magnitude as that of a proton, that is, $\approx$ 1 fm or smaller. The loop $S^1 \times D^2$ collapses to a small size inside its Schwarzschild radius, thus creating a black hole the center of which contains the singularity. In this scenario, $M$ becomes a singular manifold at that point. Physicists are undecided whether the prediction of this singularity means that it actually exists or that current knowledge is insufficient to describe what happens at such extreme density. This singularity can be avoided by considering that the collapsing of a cosmic string loop is followed by the uncollapsing of another cosmic string loop. In other words, we propose that the creation of a cosmic string black hole is a  $3$-dimensional $1$-surgery which changes the initial $3$-manifold $M$ to another $3$-manifold $\chi(M)$ by passing through a singular point.

\smallbreak
\begin{figure}[ht!]
\begin{center}
\includegraphics[width=14.5cm]{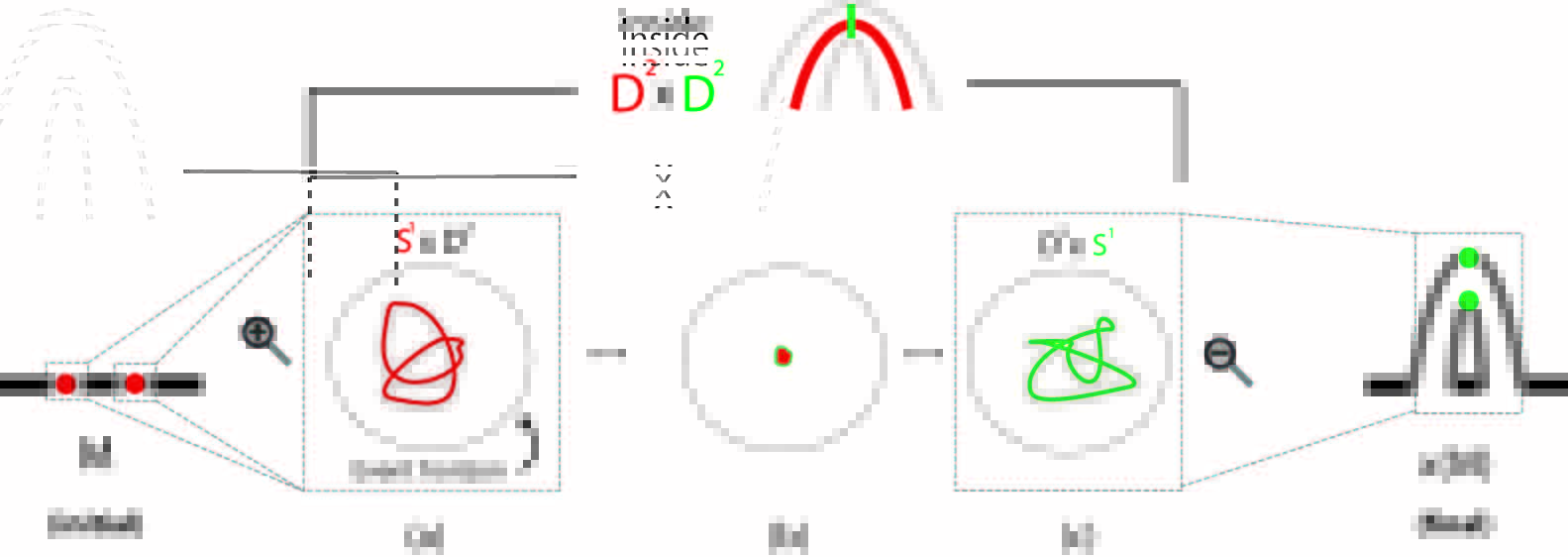}
\caption{3-dimensional 1-surgery inside the event horizon}
\label{Fig35}
\end{center}
\end{figure} 

The process starts with Fig.~\ref{Fig35}~(initial) and ends with Fig.~\ref{Fig35}~(final), where we show $1$-dimensional analogues of the $3$-dimensional instances ($\chi(M)$ is shown as the result of a $1$-dimensional $0$-surgery on line $M$, while $S^1 \times D^2$, $D^2 \times S^1$ are shown as  $S^0\times D^1$, $D^1 \times S^0$). In instances~(a) to~(c) of Fig.~\ref{Fig35}, we zoom in the region where the local process of surgery happens and we present a sketch of the $4$-dimensional process. More precisely, in instance~(a) of Fig.~\ref{Fig35}, we show a knotted embedding of the loop $S^1 \times D^2$. As we consider that the cosmic string has already shrunk to a radius smaller than its Schwarzschild radius, the event horizon is also shown in  instance~(a) of Fig.~\ref{Fig35}. Fig.~\ref{Fig35}~(b) shows the loop shrinking to the critical point which coincides with the physical singularity. After the collapsing, the process does not stop, but another  manifold, $D^{2} \times S^1$, which corresponds to another cosmic string loop, grows from the singular point of Fig.~\ref{Fig35}~(b). In instance~(c) of Fig.~\ref{Fig35}  we show the uncollapsing of the cosmic string $D^{2} \times S^1$ which transforms the initial manifold $M$ to $\chi(M)$, see Fig.~\ref{Fig35}~(final). As in previous section, the whole handle $D^2\times D^2$ (see the upper part of Fig.~\ref{Fig35} for its $1$-dimensional analogue), which contains all instances of the process, is within the event horizon of the black hole. 
\smallbreak
\textit{Thus, considering black hole formation as a knot surgery (or $3$-dimensional $1$-surgery) on a cosmic string loop allows us to go through the singular point of the black hole without having a singular manifold  in the end. Instead, we end up with a topologically new universe with a local topology change from the $3$-space $M$ to the $3$-space $\chi(M)$ and, as suggested in Fig.~\ref{Fig35}, this topology change happens within the event horizon.}
\smallbreak

In analogy with the previous section, the instances of this global process also make a spatial $4$-dimensional cobordism $W$ which, in this case, is obtained by attaching a handle $D^2\times D^2$ to $M \times I$, recall Section~\ref{global} for details. The effect of this topological change on space $M$ can be characterized by determining the fundamental group of the resulting manifold $\chi(M)$ as shown in Section~\ref{Fundamen3d1}. Further, as seen in Section~\ref{3dMorse}, the local changes of surgery can be algebraically described by the local form of the corresponding Morse function. As pointed out in Section~\ref{3DTemp}, the gradient of this function can be seen as a force which, in this case, corresponds to the string tension, which collapses the cosmic string, see \cite{Hawk} for details. 

Following the core description of Section~\ref{3DTemp}, we can now think of a cosmic string black hole as a knot surgery starting with a cosmic string in space (a possibly knotted $S^{1}$) which collapses to one site (the singular point) and re-emerges as another cosmic string (or possibly knotted $S^{1}$). See Fig.~\ref{3D_Cores}~(2) for a core view of the unknot and Fig.~\ref{Fig35} for the case of a non-trivial knot.

\subsubsection{New $3$-manifolds behind the event horizon and the Poincaré dodecahedral space}\label{Ex_Poincare}
As mentioned in Section~\ref{3D1Topo}, starting with $M=S^3$, knot surgery can produce every closed, connected, orientable $3$-manifold. This means that, if we consider the initial $3$-space to be $M=S^3$, our approach, apart from avoiding a singular $3$-space, also gives rise to a very large family of $3$-manifolds. One such $3$-manifold, which is of great interest to physicists, is the Poincaré dodecahedral space. This space can be described by taking a dodecahedron and identifying the opposite faces, as shown in Fig.~\ref{Poincare}, and has been proposed as a possible shape for the geometric universe, see~\cite{Weeks,Luminet,Levin}. As J-P. Luminet states in~\cite{JPrecent}, the 2015 release of Planck data remains consistent with more complex shapes, such as the spherical Poincaré dodecahedral space. 

From our viewpoint, this manifold is obtained by doing knot surgery on the trefoil knot with the right framing. See for example~\cite{PS}. Further details on the Poincaré dodecahedral space and its fundamental group are given in Appendix~\ref{surgeryonK}. Hence, in such a scenario, our approach suggests that:

\smallbreak
\textit{The shape of the universe came about via a knot surgery following the process showed in  Fig.~\ref{Fig35}, where the collapsed knot is a trefoil cosmic string.} 
\smallbreak

\begin{figure}[ht!]
\begin{center}
\includegraphics[width=4cm]{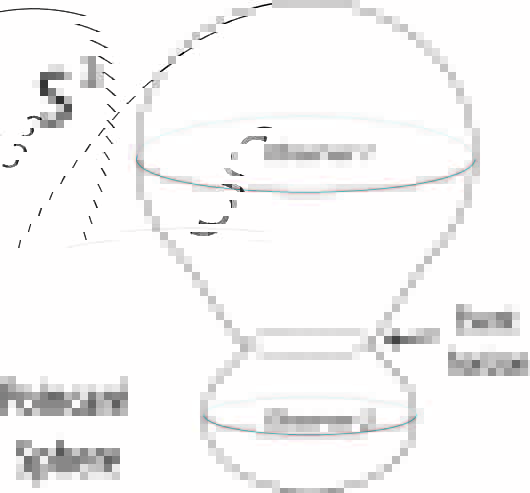}
\caption{Observer 1 and 2}
\label{BH_PoincaFlip}
\end{center}
\end{figure}

Let us now take this scenario further and suppose we have observers in an initial spherical universe $M=S^3$. After surgery, a `mathematical' observer would be able to see the Poincaré dodecahedral space and detect the topology change. From his point of view, he could exit from any point on the boundary of the thickened trefoil knot and re-emerge from any other point of its boundary. However, a physical observer, who is subject to the restrictions of physical laws, would only see the event horizon in which the trefoil cosmic string has collapsed. Let us call this observer, \textit{Observer~1}. After surgery, \textit{Observer~1} would see the same universe $S^3$, the only change being the formation of the spherical event horizon, shown as an $S^1$ in the lower dimensional analogue of Fig.~\ref{BH_PoincaFlip}. On the other side of the event horizon we can conjecture that a new universe has emerged in which new observers might evolve. Such an observer, say \textit{Observer~2}, will see a Poincaré dodecahedral space and the event horizon from the other side, unaware that the original $S^3$ universe is behind it, see Fig.~\ref{BH_PoincaFlip}. 
\smallbreak
\textit{Hence, finding the Poincaré dodecahedral space (or some other non-trivial $3$-manifold) in our universe may indicate that we are observers that evolved inside the event horizon of a collapsed trefoil cosmic string (or some other cosmic string).}
\smallbreak

\subsubsection{String of entangled black holes as a generalized wormhole}\label{StringBH}
Continuing the example of Section~\ref{Ex_ER_EPR}, we will discuss the relation of cosmic string black holes with the $ER=EPR$ hypothesis, see~\cite{ER_EPR_,ER_EPR}. As we will see, our topological perspective makes cosmic string black holes equivalent to wormholes made from a string of entangled black holes. 

To see this, we will first present a visualization, which will allow us to connect both types of surgery. Recall, from Section~\ref{3d0WH_GC}, how the core description of the process of $3$-dimensional $0$-surgery of Fig.~\ref{3D_Cores}~(1) fits the formation of a wormhole from an entangled pair of black holes. In the figure, the two centers of the black holes $S^0$ (in red) represent the boundary component $S^0 \times D^3$ of the handle $D^1 \times D^3$, while the wormhole core $S^2$ (in green) represents the other boundary component $D^1 \times S^2$ of $D^1 \times D^3$.  

In Fig.~\ref{EnStrings}~(1) we show both the initial and the final stage of the process in one instance. We further simplify Fig.~\ref{3D_Cores}~(1) by representing the boundary component $D^1 \times S^2$ of $D^1 \times D^3$ with $D^1$ instead of $S^2$. Hence, in Fig.~\ref{EnStrings}~(1) the two black holes $S^0$ (in red) come together to form the core $D^1$ of the wormhole (in green), which is also the core $D^1$ of the handle $D^1 \times D^3$ containing the temporal `slices' of the process of $3$-dimensional $0$-surgery.
 
\smallbreak
\begin{figure}[ht!]
\begin{center}
\includegraphics[width=7cm]{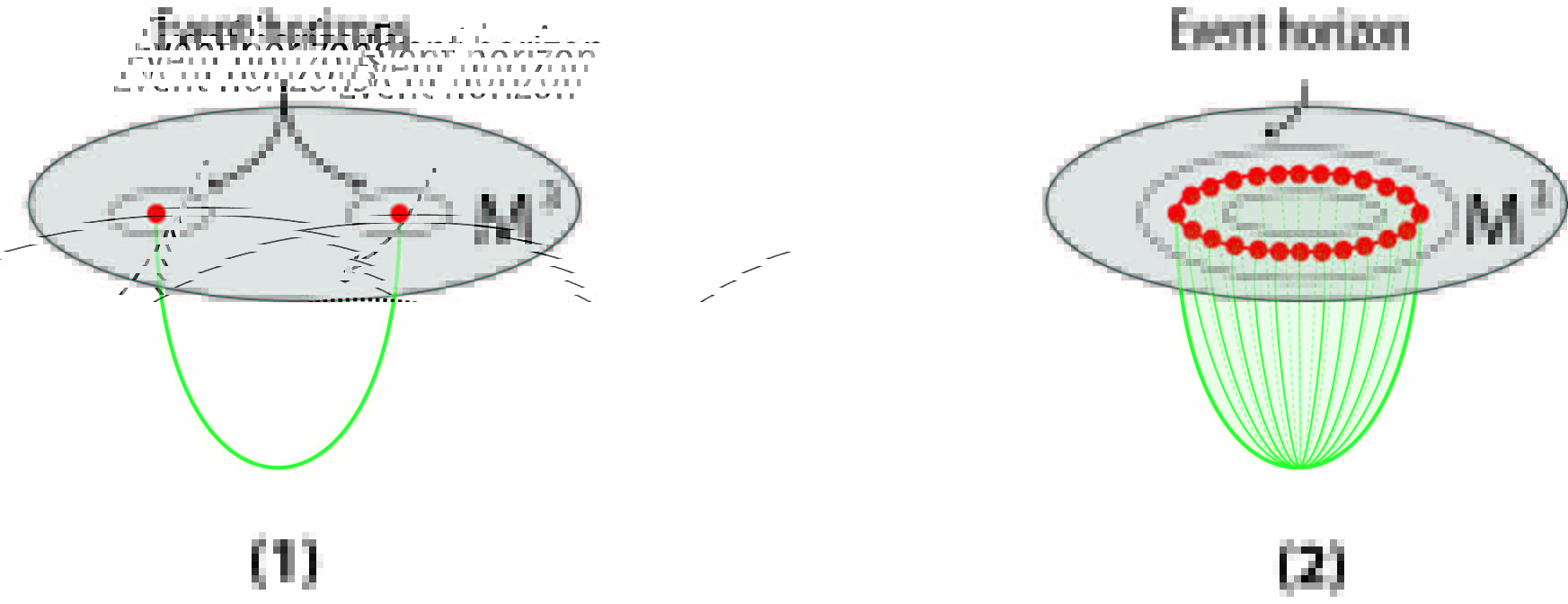}
\caption{\textbf{(1)} Pair of entangled black holes \textbf{(2)} String of entangled black holes}
\label{EnStrings}
\end{center}
\end{figure} 

Let us now consider a cosmic string loop made of several pairs of entangled concentrated masses. When each pair of masses collapses, they become connected by a wormhole, as shown in Fig.~\ref{EnStrings}~(1). Given that all these pairs of masses have started on the same cosmic string, the distinct wormholes merge and the entire collection of wormhole cores (the green arcs $D^1$ in Fig.~\ref{EnStrings}~(1)) forms a $2$-disc $D^2$, see Fig.~\ref{EnStrings}~(2), which is the core of the higher dimensional handle $D^2 \times D^2$ containing the temporal `slices' of the process of $3$-dimensional $1$-surgery. Note that, as $D^2$ cups off a circle while $D^1$ joins two points, one can rotate Fig.~\ref{EnStrings}~(1) to receive Fig.~\ref{EnStrings}~(2).
\smallbreak
\textit{Hence, a cosmic string black hole can be seen as a collection of Einstein-Rosen bridges, which generalizes having a separate bridge for each pair of entangled black holes.}
\smallbreak
The process of surgery amalgamates these bridges to form a new $3$-manifold resulting from surgery on the cosmic string. The effect of knot surgery is that, from any black hole location on the cosmic string to any other, there is a `bridge' through the new $3$-manifold.

\section{Conclusions}
In this paper, we use tools from Morse theory and algebraic topology to describe the process of topological surgery both locally and globally. This approach  provides continuous paths to wormhole and cosmic string black hole formations. Adding the $ER=EPR$ hypothesis, we also describe the entanglement of a pair (or a string) of black holes, thus binding the quantum connectivity of space with the rich structure of $3$- and $4$-dimensional manifolds. 

Our knot surgery hypothesis for cosmic strings suggests that there should be a generalization of the $ER=EPR$ hypothesis to relate quantum entanglement with more general cobordisms and in particular with the new $3$-manifold structure that results from cosmic string collapse. This will be the subject of a sequel to the present paper. 

We also describe how we can receive the Poincaré dodecahedral space and a plethora of non-trivial $3$-manifolds from the formation of cosmic string black holes. In our description, the formation of such a black hole does not result in a singular $3$-manifold but rather a topologically new universe with a local topology change of $3$-space. As the proposed process avoids the singularity problem, we are currently working on the physical implications and the potential observational evidence of this novel topological perspective.

\section*{Acknowledgments}
Antoniou's work was partially supported by the Papakyriakopoulos scholarship  which was awarded by the Department of Mathematics of the National Technical University of Athens. Kauffman's work was supported by the Laboratory of Topology and Dynamics, Novosibirsk State University (contract no. 14.Y26.31.0025 with the Ministry of Education and Science of the Russian Federation).

\newpage
\appendix
\section{Appendices}\label{appendix}
\subsection{Visualizing surgery using stereographic projection}\label{2D3DStereo}
We present here a way to visualize the initial and the final instances of $m$-dimensional surgery in $\mathbb{R}^{m}$ and discuss the cases of $m=2$ and $m=3$.

Let us first be reminded from Section~\ref{local} that, if we glue together the two $m$-manifolds with boundary involved in the process of $m$-dimensional $n$-surgery along their common boundary using the standard embedding we obtain the $m$-sphere $S^m$. The idea of our proposed  visualization of surgery is that while $S^m$ is embedded in $\mathbb{R}^{m+1}$, it can be stereographically projected to $\mathbb{R}^{m}$. Hence, for every $m$, one can visualize the initial and the final instances of the local process of $m$-surgery one dimension lower. In the following examples we deliberately did not  project the intermediate instances, as this cannot be done without self-intersections.

\subsubsection{Visualizing 2-dimensional 0-surgery in \texorpdfstring{$\mathbb{R}^2$}{R2} }\label{2DStereo}
For $m=2$ and $n=0$, the initial and final instances of $2$-dimensional $0$-surgery that make up $S^2$ are shown in Fig.~\ref{2D_Proj}~(1). If we remove the point at infinity, we can project the points of $S^2 \setminus \{\infty\}$ on $\mathbb{R}^2$ bijectively. We will use two different projections for two different choices for the point at infinity. The first one is shown in Fig.~\ref{2D_Proj}~({2\textsubscript{a}}) where the point at infinity is a point of the core $S_2^1$ of $D^{1}\times S_2^{1}$. In this case, the two great circles $S_2^1 = \ell \cup \{\infty\}$ and $\ell' \cup \{\infty\}$ of $S^2$ are projected on the two perpendicular infinite lines $\ell$ and  $\ell'$ in ${\mathbb R}^2$. In the second one, shown in Fig.~\ref{2D_Proj}~({2\textsubscript{b}}), the point at infinity is the center of one of the two discs $S_1^0\times D^{2}$. In this case the great circle $\ell' \cup \{\infty\}$ in $S^2$ is, again, projected to the  infinite line $\ell'$ in ${\mathbb R}^2$ but the great circle $S_2^1 = \ell$ is now projected to the  circle $\ell$ in ${\mathbb R}^2$. 

\smallbreak
\begin{figure}[ht!]
\begin{center}
\includegraphics[width=14cm]{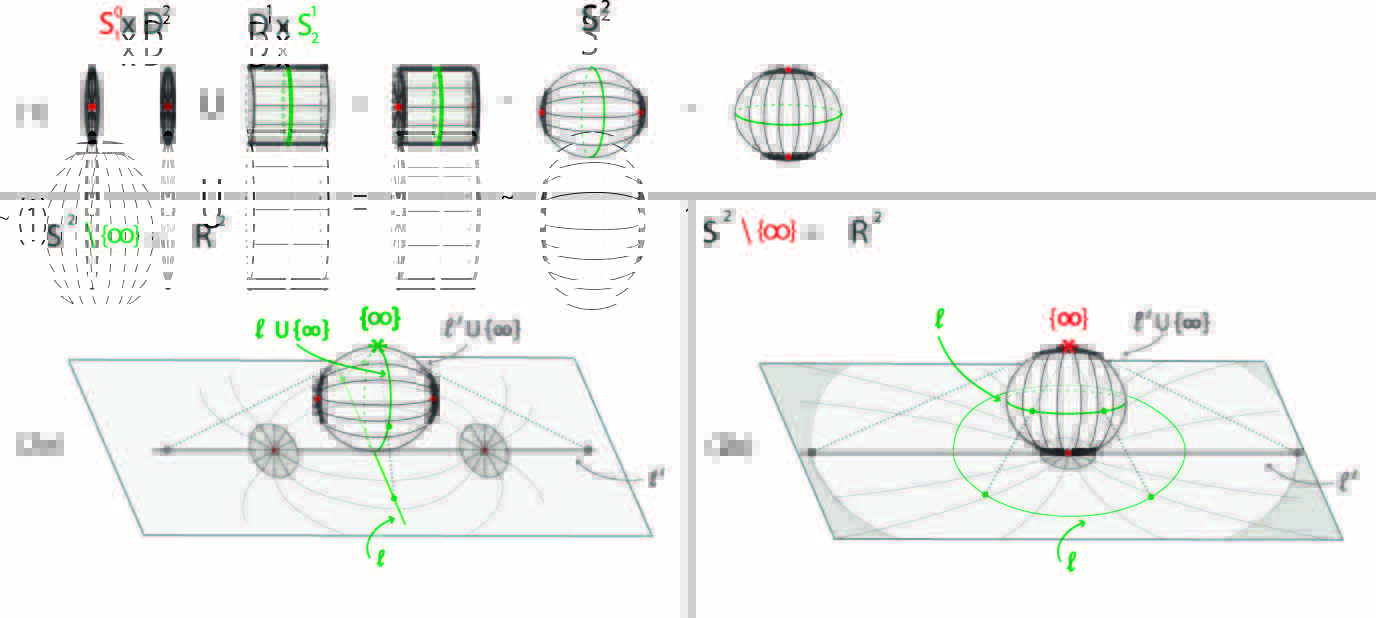}
\caption{ \textbf{(1)}  $(S_1^0\times D^{2}) \cup (D^{1}\times S_2^{1}) =S^{2}$ (\textbf{{2\textsubscript{a}})} First projection of $S^2 \setminus \{\infty\}$ to $\mathbb{R}^2$ \textbf{({2\textsubscript{b}})} Second projection of $S^2 \setminus \{\infty\}$ to $\mathbb{R}^2$}
\label{2D_Proj}
\end{center}
\end{figure}

As mentioned in Section~\ref{local}, the one dimension higher of the disc $D^{m+1}$  leaves room for the process of $m$-dimensional surgery to take place continuously. For $2$-dimensional surgery, the third dimension allows the two points of the core $S_1^0$ to touch at the critical point, recall Fig.\ref{3D_ThickenedCores}~(1). Using the two stereographic projections discussed above and shown again in Fig.~\ref{2D_Decomp}~(1\textsubscript{a}) and~(1\textsubscript{b}), we present in Fig.~\ref{2D_Decomp}~(2\textsubscript{a}) and~(2\textsubscript{b}) two different local visualizations of $2$-dimensional surgery in $\mathbb{R}^2$. Note that in Fig.~\ref{2D_Decomp}~({1\textsubscript{b}}) and~({2\textsubscript{b}}), the red dashes show that all lines converge to the point at infinity which is the center of the decompactified disc and one of the points of $S_1^0$. The process of $2$-dimensional $0$-surgery starts with either one of the first instances of Fig.~\ref{2D_Decomp}~(2\textsubscript{a})  and~(2\textsubscript{b}). Then the centers of the two discs $S_1^0\times D^{2}$ collapse to the critical point which is shown with increased transparency to remind us that this happens in one dimension higher, see the second instances of either Fig.~\ref{2D_Decomp}~(2\textsubscript{a}) or~(2\textsubscript{b}). Finally the cylinder $D^{1}\times S_2^{1}$ uncollapses, as illustrated in the last instances of Fig.~\ref{2D_Decomp}~(2\textsubscript{a}) and~(2\textsubscript{b}). Clearly, the reverse processes provide visualizations of $2$-dimensional $1$-surgery.

\smallbreak
\begin{figure}[ht!]
\begin{center}
\includegraphics[width=14cm]{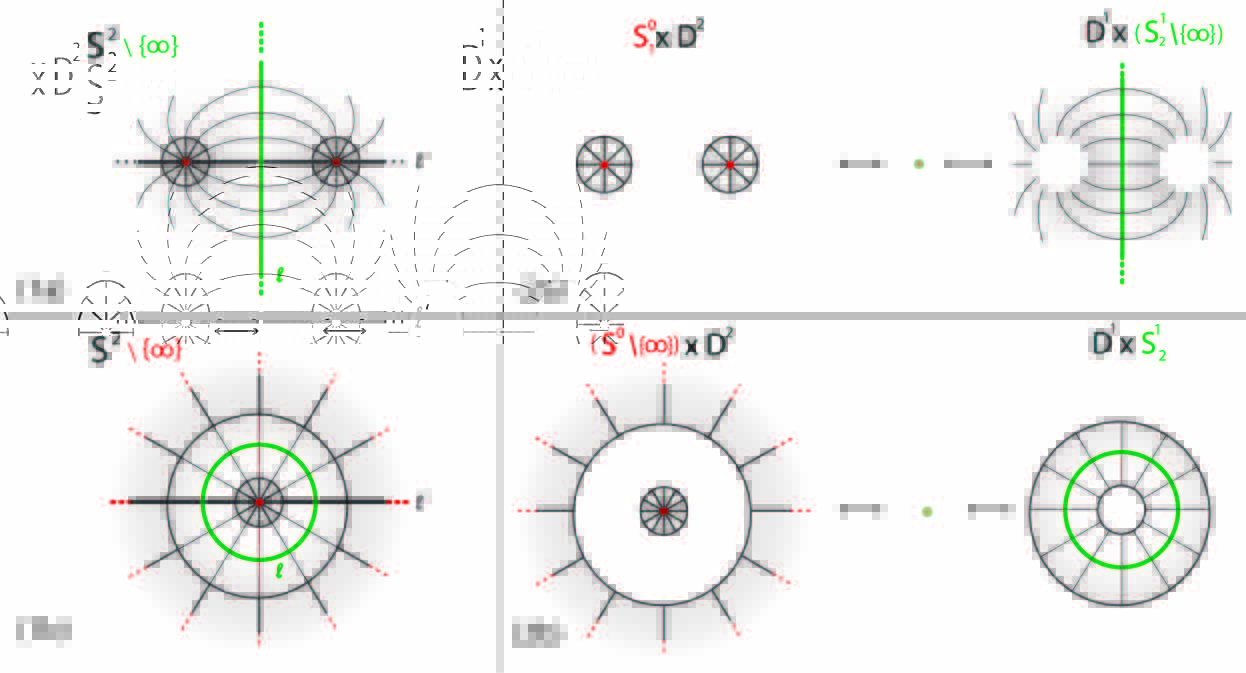}
\caption{\textbf{({1\textsubscript{a}})} First projection \textbf{({1\textsubscript{b}})} Second projection. 
\textbf{({2\textsubscript{a}})}, \textbf{({2\textsubscript{b}})} Corresponding initial and final instances of $2$-dimensional $0$-surgery in $\mathbb{R}^2$ }
\label{2D_Decomp}
\end{center}
\end{figure}

\subsubsection{Visualizing 3-dimensional surgery in  \texorpdfstring{$\mathbb{R}^3$}{R3}}\label{3DStereo}
Moving up one dimension, the initial and final instances of $3$-dimensional surgery form $S^{3}=\partial D^{4}$. Since, now, $S^3 \setminus \{\infty\}$ can be projected on ${\mathbb R}^3$ bijectively, we will present a new  way of visualizing $3$-dimensional surgery in ${\mathbb R}^3$ by rotating appropriately the projections of the initial and  final instances of $2$-dimensional $0$-surgery in ${\mathbb R}^2$.

The underlying idea is that, in general, $S^n$ which is embedded in ${\mathbb R}^{n+1}$ can be obtained by a 180° rotation of $S^{n-1}$, which is embedded in ${\mathbb R}^n$. So, a 180° rotation of $S^{0}$ around an axis bisecting the interval of the two points (e.g. line $\ell$ in  Fig.~\ref{2D_Proj}~({2\textsubscript{a}})) gives rise to $S^1$ (which is $\ell' \cup \{\infty\}$ in  Fig.~\ref{2D_Proj}~({2\textsubscript{a}})), while a 180° rotation of $S^{1}$ around any diameter gives rise to $S^2$. For example, in Fig.~\ref{2D_Proj}~({2\textsubscript{b}})), a 180° rotation of $\ell' \cup \{\infty\}$ around the north-south pole axis results in the $2$-sphere shown in the figure. Now, the creation of $S^3$ (which is embedded in ${\mathbb R}^4$) as a rotation of $S^2$ requires a fourth dimension in order to be visualized. Instead we can obtain its stereographic projection in ${\mathbb R}^3=S^3 \setminus \{\infty\}$ by rotating the stereographic projection of $S^2 \setminus \{\infty\}={\mathbb R}^2$. Indeed, a 180° rotation of the plane around any line in the plane gives rise to the $3$-space.

As we will see, each type of $3$-dimensional surgery  corresponds to a different rotation, which, in turn, corresponds to a different decomposition of $S^{3}$. As we consider here two kinds of projections of $S^2 \setminus \{\infty\}$ in ${\mathbb R}^2$, see Fig.~\ref{2D_Decomp}~({1\textsubscript{a}}) and~({1\textsubscript{b}}), these give rise to two kinds of decompositions of $S^{3}$ via rotation, see Fig.~\ref{3D_Decomp_F}~({1\textsubscript{a}}) and~({1\textsubscript{b}}). Each decomposition, now, leads to the visualizations of both types of $3$-dimensional surgery.

\smallbreak
\begin{figure}[ht!]
\begin{center}
\includegraphics[width=14cm]{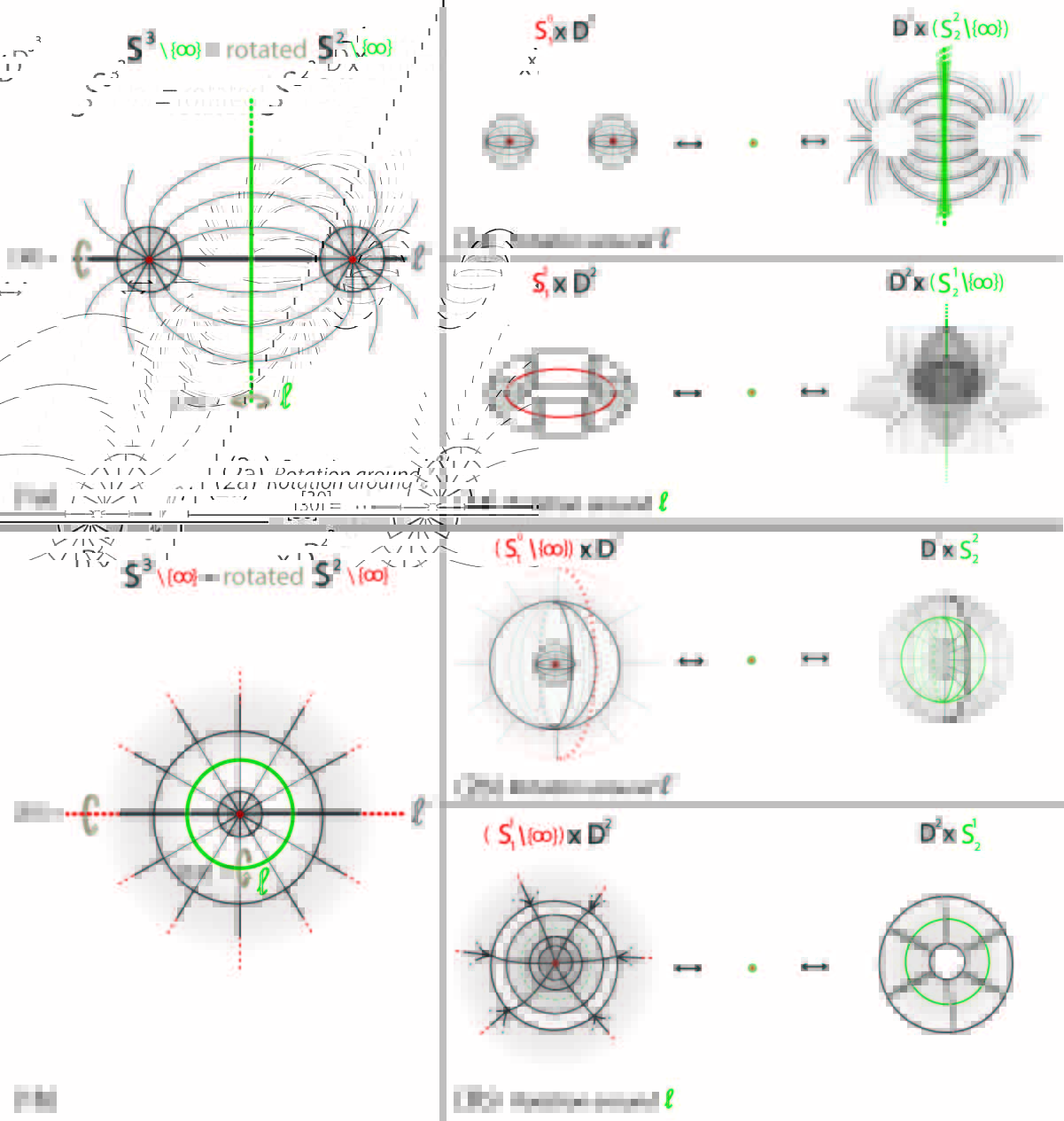}
\caption{ \textbf{({1\textsubscript{a}})},\textbf{({1\textsubscript{b}})} Representations of $S^3 \setminus \{\infty\}$ as rotations of $S^2 \setminus \{\infty\}$  using first and second projection. \textbf{({2\textsubscript{a}})},\textbf{({2\textsubscript{b}})} $3$-dimensional $0$-surgery in $\mathbb{R}^3$  using  first and second  projection. \textbf{({3\textsubscript{a}})}, \textbf{({3\textsubscript{b}})} $3$-dimensional $1$-surgery in $\mathbb{R}^3$    using first and second projection.}
\label{3D_Decomp_F}
\end{center}
\end{figure}

Let us start with the first projection. In Fig.~\ref{3D_Decomp_F}~({1\textsubscript{a}}), we show this decompactified view in ${\mathbb R}^2$ and the two axes of rotation $\ell'$ and $\ell$. As we will see, a rotation around axis $\ell'$ induces $3$-dimensional $0$-surgery in ${\mathbb R}^3$ while a rotation around axis $\ell$ induces $3$-dimensional $1$-surgery in ${\mathbb R}^3$.

Namely, in the case of $3$-dimensional $0$-surgery, a \textit{horizontal} rotation of 180° around axis $\ell'$ transforms the two discs $S_1^0\times D^2$ of Fig.~\ref{2D_Decomp}~({2\textsubscript{a}}) (the first instance  of $2$-dimensional $0$-surgery) to the two $3$-balls $S_1^0\times D^3$ of Fig.~\ref{3D_Decomp_F}~(2\textsubscript{a}) (the first instance of $3$-dimensional $0$-surgery). After the collapsing of the centers of the two $3$-balls $S_1^0\times D^3$, the rotation transforms the decompactified cylinder $D^{1}\times (S_1^{1}\setminus \{\infty\})$ of Fig.~\ref{2D_Decomp}~({2\textsubscript{a}}) (the last instance  of $2$-dimensional $0$-surgery) to the decompactified thickened sphere $D^{1}\times (S_2^{1}\setminus \{\infty\})$ of Fig.~\ref{3D_Decomp_F}~(2\textsubscript{a}) (the last instance of $3$-dimensional $0$-surgery). Indeed, the rotation of line $\ell$ along $\ell'$ creates the green plane that cuts through ${\mathbb R}^3$ and separates the two resulting 3-balls $S_1^0\times D^3$. This plane is shown in green in the last instance of Fig.~\ref{3D_Decomp_F}~(2\textsubscript{a}) and it is the decompactified view of the sphere $S_2^2$ in  ${\mathbb R}^3$. Note that it is thickened by the arcs connecting the two discs $S_1^0\times D^2$ which have also been rotated.

Similarly, in the case of $3$-dimensional $1$-surgery, a \textit{vertical} rotation of 180° around axis $\ell$ transforms the two discs $S_1^0\times D^2$ (the first instance  of $2$-dimensional $0$-surgery shown in Fig.~\ref{2D_Decomp}~({2\textsubscript{a}})) to the solid torus $S_1^1\times D^2$ (the first instance of $3$-dimensional $1$-surgery), see Fig.~\ref{3D_Decomp_F}~(3\textsubscript{a}). After the collapsing of the (red) core $S_1$ of $S_1^1\times D^2$, the rotation transforms the decompactified cylinder $D^{1}\times (S_1^{1}\setminus \{\infty\})$ of Fig.~\ref{2D_Decomp}~({2\textsubscript{a}}) (the last instance  of $2$-dimensional $0$-surgery) to the decompactified solid torus $D^{2}\times (S_1^{1}\setminus \{\infty\})$ of Fig.~\ref{3D_Decomp_F}~(3\textsubscript{a}) (the last instance of $3$-dimensional $1$-surgery). Indeed, each of the arcs $D^1$ connecting the two discs $S_1^0\times D^2$ generates through the rotation a $2$-dimensional disc $D^2$, and the set of all such discs are parametrized by the points of the line $\ell$ in ${\mathbb R}^3$.

In both cases, in Fig.~\ref{3D_Decomp_F}~({1\textsubscript{a}}), $S^3$ is presented as the result of rotating the $2$-sphere $S^2 = {\mathbb R}^2 \cup \{\infty\}$. For $3$-dimensional $0$-surgery, $S^2$ is rotated about the circle $\ell' \cup \{\infty\}$  where $\ell'$ is a straight horizontal line in ${\mathbb R}^2$. The resulting decomposition of $S^3$ is $S^3 =(S_1^0\times D^3) \cup (D^1 \times S_2^2)$, a thickened sphere with two $3$-balls glued along the boundaries, which is visualized as $S^3\setminus \{\infty\} =(S_1^0\times D^3) \cup (D^1 \times (S_2^2\setminus \{\infty\}))$. For $3$-dimensional $1$-surgery, $S^2$ is rotated about the circle $\ell \cup \{\infty\}$  where $\ell$ is a straight vertical line in ${\mathbb R}^2$. The resulting decomposition of $S^3$ is $S^3 =(S_1^1\times D^2) \cup (D^2 \times S_2^1)$, two solid tori glued along their common boundary,  which is visualized as $S^3\setminus \{\infty\} =(S_1^1\times D^2) \cup (D^2 \times (S_2^1\setminus \{\infty\}))$. 
 
Analogously, starting with the second projection of Fig.~\ref{2D_Decomp}~({1\textsubscript{b}}), the same rotations induce each type of $3$-dimensional surgery and their corresponding decompositions of $S^3$, see Fig.~\ref{3D_Decomp_F}~({1\textsubscript{b}}). More precisely, a \textit{horizontal} rotation of the instances of Fig.~\ref{2D_Decomp}~({2\textsubscript{b}}) by 180° around axis $\ell'$ induces the initial and final instances of $3$-dimensional $0$-surgery visualized in ${\mathbb R}^3$, see Fig.~\ref{3D_Decomp_F}~({2\textsubscript{b}}). The $3$-sphere $S^3$ is now visualized as $S^3\setminus \{\infty\} =((S_1^0\setminus \{\infty\})\times D^3) \cup (D^1 \times S_2^2)$, a thickened sphere union two $3$-balls with the center of one of them removed (being the point at infinity).

Similarly, a  rotation of the instances of Fig.~\ref{2D_Decomp}~({2\textsubscript{b}}) by 180° around the (green) \textit{circle} $\ell$ induces the initial and final instances of $3$-dimensional $1$-surgery visualized in ${\mathbb R}^3$, see Fig.~\ref{3D_Decomp_F}~({3\textsubscript{b}}). Note that $\ell$ is now a circle and not a (vertical) line. The easiest part for visualizing this rotation is the rotation of the middle annulus of Fig.~\ref{3D_Decomp_F}~({1\textsubscript{b}}) which gives rise to the solid torus $D^2 \times S_2^1$ in  Fig.~\ref{3D_Decomp_F}~({3\textsubscript{b}}). The same rotation of the two remaining discs around $\ell$ can be visualized as follows: each radius of the inner disc lands from above the plane on the corresponding radius of the outer disc. At the same time, that radius of the outer disc lands on the corresponding radius of the inner disc from underneath the plane. So, the two corresponding radii together have created by rotation an annular ring around $\ell$. Note that the red center of the inner disc will land on all points at infinity, creating a half-circle from above and, at the same time, all points at infinity land on the center of the inner disc and create a half-circle from below. Glued together, the two half-circles create a (red) circle. Now, the set of all annular rings around $\ell$ and parametrized by $\ell$ make up the complement solid torus $S_1^1\setminus \{\infty\}\times D^2$ whose core is the aforementioned red circle. The $3$-sphere $S^3$ is visualized through this rotation as  $S^3\setminus \{\infty\} =((S_1^1\setminus \{\infty\})\times D^2) \cup (D^2 \times S_2^1)$, the decompactified union of two solid tori.

Finally, it is worth pinning down that the two types of visualizations presented above are related. Indeed, the $(D^1 \times (S_2^2\setminus \{\infty\}))$ shown in the rightmost instance of Fig.~\ref{3D_Decomp_F}~({2\textsubscript{a}}) is the decompactified view of the $(D^1 \times S_2^2)$ shown in the rightmost instance of Fig.~\ref{3D_Decomp_F}~({2\textsubscript{b}}). Likewise, the $(D^2 \times (S_2^1\setminus \{\infty\}))$ shown in the rightmost instance of Fig.~\ref{3D_Decomp_F}~({3\textsubscript{a}}) is the decompactified view of the the solid torus $(D^2 \times S_2^1)$ shown in the rightmost instance of Fig.~\ref{3D_Decomp_F}~({3\textsubscript{b}}). Further, the $(S_1^0\setminus \{\infty\})\times D^3$ and $(S_1^1\setminus \{\infty\})\times D^2$ shown in the leftmost instances of Fig.~\ref{3D_Decomp_F}~({2\textsubscript{b}}) and~({3\textsubscript{b}}) are the decompactified views of $S_1^0\times D^3$ and $S_1^1\times D^2$ shown in the leftmost instances of Fig.~\ref{3D_Decomp_F}~({2\textsubscript{a}}) and~({3\textsubscript{a}}) respectively.

\subsection{The fundamental group}\label{Fundappendix}
The fundamental group is one of the most significant algebraic constructions for obtaining topological information about a topological space.  It is a topological invariant: homeomorphic topological spaces have the same fundamental group.


\subsubsection{Topological spaces}\label{TSG}
The fundamental group of an arbitrary topological space $X$ with reference to a basepoint $p$ in that space is denoted as $\pi_1(X, p)$ and is generated by continuous paths in $X$ that start and end at $p$ (loops at $p$ in $X$). These loops in $X$ are taken up to an equivalence relation called \textit{homotopy} where two loops $\alpha$ and $\beta$ are said to be homotopic if one can be continuously deformed to the other. In other words, there is a continuous family of loops based at $p$ starting with $\alpha$ and ending with $\beta$, which is usually parametrized in the unit interval $[0, 1]$. The collection of all loops based at $p$, taken up to homotopy, forms a group where the inverse of a loop is the loop obtained by reversing the direction of its parametrization. 

For example, the Euclidean space $\mathbb{R}^n$ for $n \geq 1$ and the $n$-sphere $S^n$ for $n \geq 2$ have trivial fundamental groups, as all loops in these manifolds can be shrunk to a point. However, the fundamental group of $S^1$ is not trivial. In fact it is the infinite cyclic group generated by a single element. It contains all loops which wind around the circle a given number of times, which can be positive or negative, depending on the winding direction. The `product' of a loop which winds around the circle $m$ times with another that winds around $n$ times is a loop which winds around $m + n$ times. So the fundamental group of $S^1$ is isomorphic to $\mathbb{Z}$, the additive group of integers.

An interesting property is that the fundamental group of a product space $X \times Y$ is the direct product of the fundamental groups of $X$ and $Y$: $\pi_1(X \times Y)=\pi_1(X ) \times \pi_1(Y)$. Note that given groups $G$ and $H$ we define the direct product $G \times H$ as the set of ordered pairs $(g,h)$ with $g \in G$ and $h \in H$ and we define $(g,h)\cdot(g',h') = (gg',hh')$. With this structure, $G \times H$ is a new group with identity $e = (e_G, e_H)$. This property allows us to calculate the fundamental group of more complicated spaces. For instance, the fundamental group of the $n$-dimensional torus $T^n={S^1 \times ... \times S^1}$ which is the product of $n$ circles is $\pi_1(T^n)={\pi_1(S^1) \times ... \times \pi_1(S^1)}=\mathbb{Z}^n$. Similarly, the fundamental group of the $3$-manifold $S^1 \times S^2$ is $\pi_1(S^1 \times S^2)=\mathbb{Z} \times \{1\}=\mathbb{Z}$.

\subsubsection{The blackboard framing}\label{Blackboard}
In addition to the definition given in Section~\ref{3D1Knot}, a framing of a knot can be also viewed as a choice of non-tangent vector at each point of the knot. The \textit{blackboard framing} of a knot is the framing where each of the vectors points in the vertical direction, perpendicular to the plane, see Fig.~\ref{3D_31_Framing}~(2). The blackboard framing of a knot gives us a well-defined general rule for determining the framing of a knot diagram. Here the knot diagram is taken up to regular isotopy, namely up to Reidemeister II and III moves (see~\cite{Ad} for details on the Reidemeister moves). We use the curling in the diagram to determine the framing for an embedding corresponding to the knot, as will be explained below. Note that once we have chosen a longitude for the blackboard framing we can allow Reidemeister I moves (that might eliminate a curl) and just keep track of how the longitude now winds on the torus surface. 


\begin{wrapfigure}{L}{0.75\textwidth} 
\centering
\includegraphics[width=0.7\textwidth]{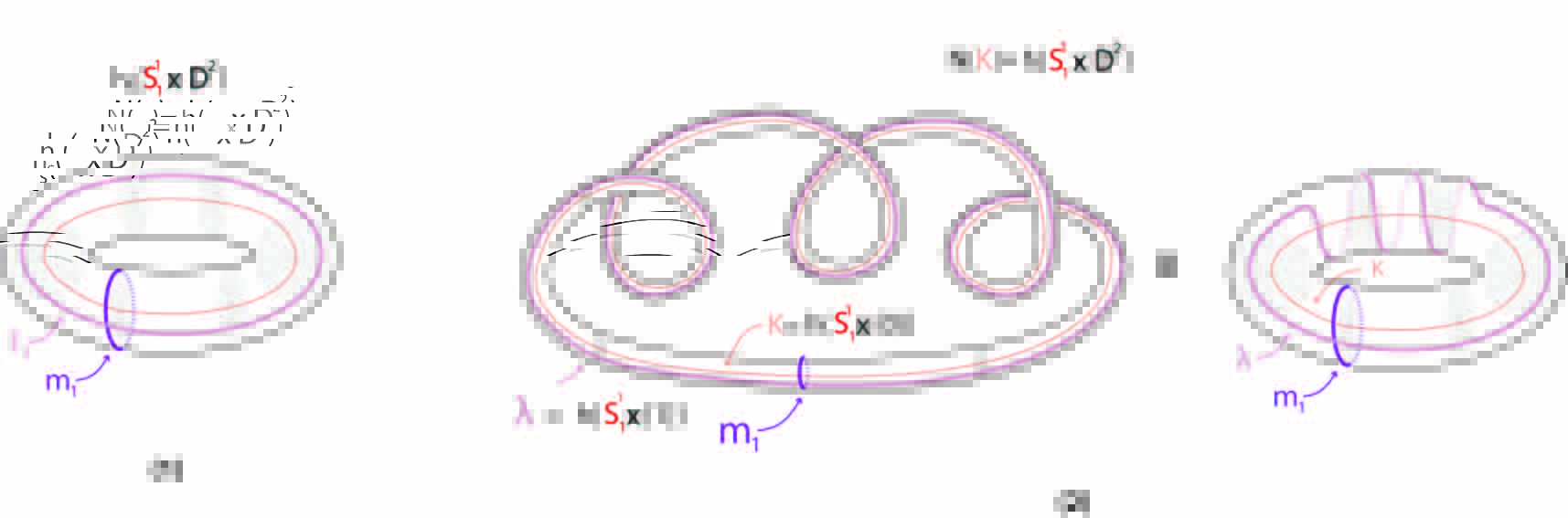}
\caption{\textbf{(1)} Longitude $l_1$ \textbf{(2)} Longitude $\lambda=l_1+3 \cdot m_1$ }
\label{3D_31_13}
\end{wrapfigure}

An example is shown in Fig.~\ref{3D_31_13}~(2). This case  corresponds to a non-trivial embedding $N(K)=h(S_1^1 \times D^2)$ where both the knot $K$ and the longitude $\lambda$ perform three curls. As also shown in Fig.~\ref{3D_31_13}~(2),  
there is an isotopic embedding of $N(K)$ where the surgery curve $K$ at the 
core of $N(K)$ is unknotted while the curls of $\lambda$ have become windings around $K$. This allows us to express $\lambda$ in terms of the unknotted longitude $l_1$ of the trivial embedding shown in Fig.~\ref{3D_31_13}~(1). Namely, as $\lambda$  performs $3$ revolutions around a meridian, it can be expressed as $\lambda=l_1+3 \cdot m_1$, see Fig.~\ref{3D_31_13}~(2).

More generally, if a longitude $\lambda$ performs $p$ revolutions around a meridian, it can be expressed as $\lambda=l_1+p \cdot m_1$. The induced `gluing' homeomorphism along the common boundary $S_1^1\times S_2^1$ maps each $\lambda$ of $V_1$ to a meridian of $V_2$, hence $h(l_1+p.m_1)=m_2$, while the meridians of $V_1$ are mapped to longitudes of $V_2$, hence $h(m_1)=h_{s}(m_1)=l_2$. The resulting manifolds obtained by doing a $3$-dimensional $1$-surgery on $M=S^3$ using such framings on the unknot are the lens spaces $L(p,1)$. For $p=0$ we have $h(l_1)=h_s(l_1)=m_2$ and $L(0,1)=S^2 \times S^1$, which was the case presented in Section~\ref{Types3D}. For more details on lens spaces see, for example, \cite{PS}. Note that the multiple of the meridian $p$ is also called the framing number.


\begin{wrapfigure}{L}{0.65\textwidth}
\centering
\includegraphics[width=0.6\textwidth]{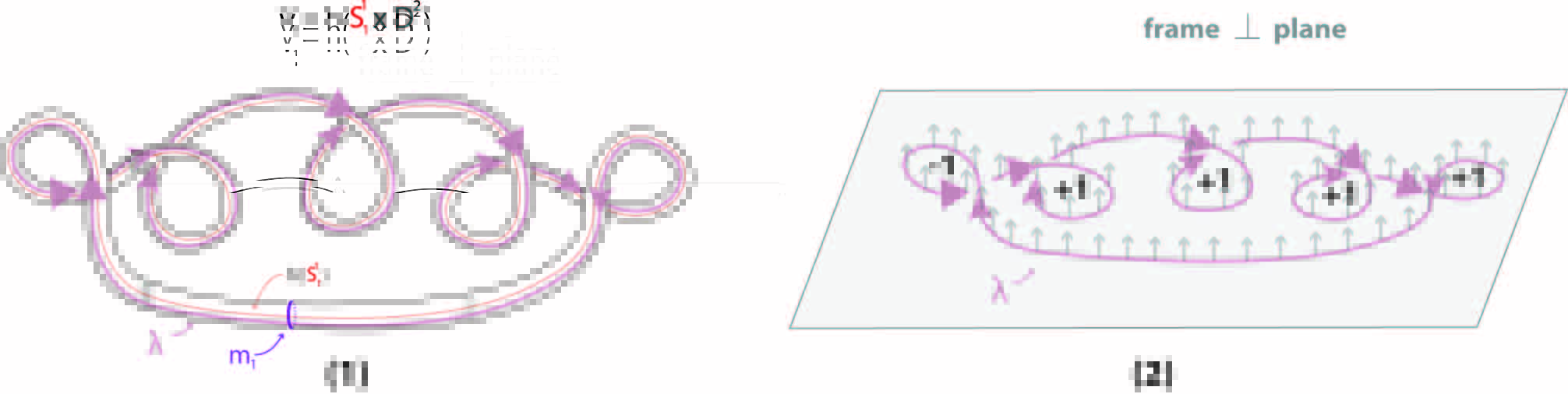}
\caption{\textbf{(1)} Isotopy of $\lambda$ \textbf{(2)} Blackboard framing of $\lambda$}
\label{3D_31_Framing}
\end{wrapfigure}

Recall that in Fig.~\ref{3D_31_13} the framing was $p=3$, as $\lambda$ performs $3$ revolutions. However, determining the framing of a knot diagram requires a well-defined general rule. For instance, that rule should give the same framing $p=3$ for the isotopic curve shown in Fig.~\ref{3D_31_Framing}~(1). This general rule is to  take the natural framing of a knot to be its \textit{writhe}, which is the total number of positive crossings minus the total number of negative crossings. The rule for the sign of a crossing is the following: as we travel along the knot, at each crossing we consider a counterclockwise rotation of the overcrossing arc. If we reach the undercrossing arc and are pointing the same way, then the crossing is positive, see Fig.~\ref{3D_31_Framing}~(2). Otherwise, the crossing is negative, see also Fig.~\ref{3D_31_Framing}~(2).

Using this convention we can calculate $\lambda$ and be sure that isotopic knots will have the same framing. For instance, in Fig.~\ref{3D_31_Framing}~(1), the framing number is the writhe of the knot diagram which is $p=Wr(\lambda)=4-1=3$.


\subsubsection{The knot group}\label{KG}
The \textit{fundamental group of a knot $K$} (or the \textit{knot group}) is defined as the fundamental group of the complement of the knot in $3$-dimensional space (considered to be either $\mathbb{R}^3$ or $S^3$) with a basepoint $p$ chosen arbitrarily in the complement. The group is denoted $\pi_1(K)$ or $\pi_1(S^3 \setminus N(K))$, where $N(K)$ is a tubular neighborhood of the knot $K$. To describe this group, it is useful to have the concept of the longitude and meridian elements of the fundamental group of a knot. The longitude and the meridian are loops in the knot complement that are on the surface of a torus, the boundary of $N(K)$. 


\begin{wrapfigure}{R}{0.4\textwidth} 
\centering
\includegraphics[width=0.35\textwidth]{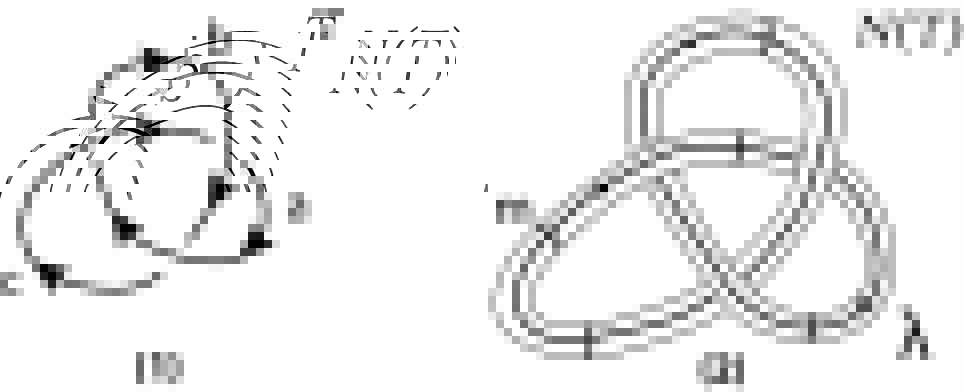}
\caption{\textbf{(1)} Trefoil knot $T$ \textbf{(2)} Tubular neighborhood $N(T)$}\label{App_Funda_solidtrefoil}
\end{wrapfigure}

For the case of the trefoil knot $T$ shown in Fig.~\ref{App_Funda_solidtrefoil}~(1), the meridian $m$ and the longitude $\lambda$ on the tubular neighborhood $N(T)$ are shown in Fig.~\ref{App_Funda_solidtrefoil}~(2). $N(T)$ is homeomorphic to a solid torus with the knot at the core of the torus. The meridian bounds a disk in the torus, that intersects $T$ transversely in a single point. The longitude runs along the surface of the torus in parallel to $T$, and so makes a second copy of the knot out along the surface of the torus. 
\begin{figure}[ht!]
\begin{center}
\includegraphics[width=14cm]{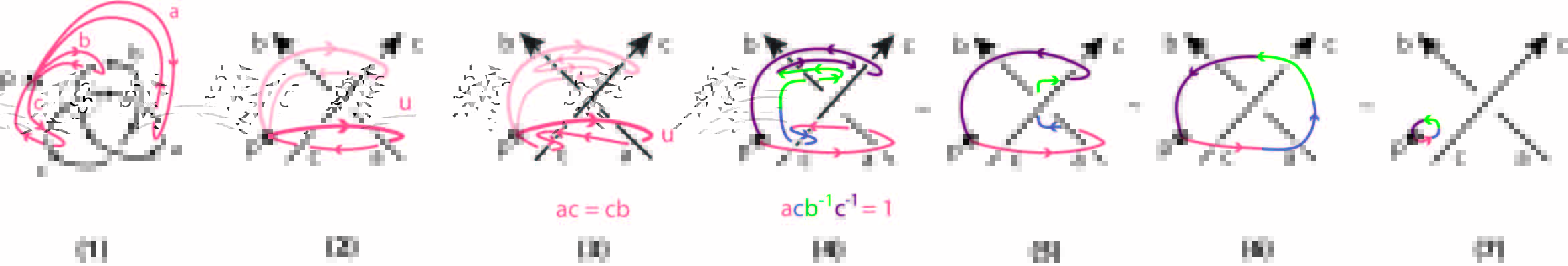}
\caption{\textbf{(1)} Generators represented as meridian loops  \textbf{(2)},\textbf{(3)} Homotopic loops \textbf{(4)},\textbf{(5)},\textbf{(6)},\textbf{(7)} Trivial curve }
\label{App_Funda_crossinglabels}
\end{center}
\end{figure}

The presentation of a knot group is generated by one meridian loop for each arc in a diagram of the knot. For the case of the trefoil, in Fig.~\ref{App_Funda_crossinglabels}~(1), we illustrate the three generators $a,b,c$ (in red) which are meridian elements associated with the corresponding arcs $a,b,c$ (in black). Each crossing gives rise to a relation among those elements. For example, let  us examine the crossing of the trefoil circled in Fig.~\ref{App_Funda_crossinglabels}~(1). By considering a loop $u$ in the close-up view of this crossing shown in Fig.~\ref{App_Funda_crossinglabels}~(2), it is shown that $u$ wraps around arcs $a$ and $c$ but can also slide upwards to wrap around arcs $c$ and $b$. In both cases, a homotopy of loop $u$ shows that we can write $u$ as a product of the generators of the fundamental group, see Fig.~\ref{App_Funda_crossinglabels}~(3). Since both homotopies describe the same loop $u$, we have $ac = cb$ which gives relation $b = c^{-1}ac$. Another way to obtain the same relation is by observing that curve $acb^{-1}c^{-1}$ contracts to a point and is therefore a trivial element of the fundamental group: $acb^{-1}c^{-1}=1$, see Fig.~\ref{App_Funda_crossinglabels}~(4),(5),(6),(7).


\begin{wrapfigure}{R}{0.3\textwidth} 
\centering
\includegraphics[width=0.25\textwidth]{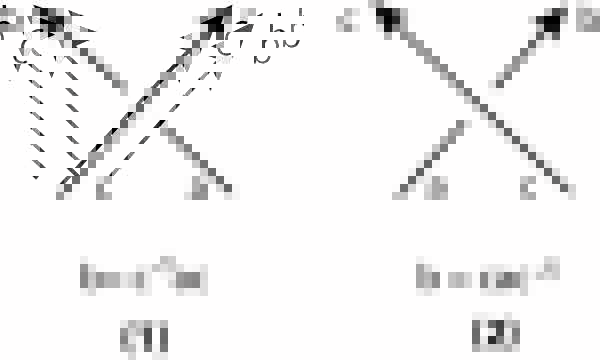}
\caption{\textbf{(1)} Positive crossing \textbf{(2)} Negative crossing}
\label{App_Funda_combi}
\end{wrapfigure}

Similarly, we can show that the relations obtained by the other two crossings are $a = b^{-1}cb$ and $c = a^{-1}ba$. More generally, given a diagram $D$ of an oriented knot $K$, if we label each arc of $D$, then the {\em fundamental group} of $K$ is the group whose generators are the labels of the arcs of $D$, and whose relations are the relations coming from the products of loops up to homotopy as we have just described them above. 
This presentation of the knot group is called the \textit{Wirtinger presentation} and its proof makes us of the Seifert-{}-van Kampen theorem, see for example~\cite{Rolfsen}. Hence for the trefoil knot, we have the presentation: $$ \pi_1(T)=\pi_1(S^3 \setminus N(T)) = (a,b,c \ | \ a = b^{-1}cb, b = c^{-1}ac, c = a^{-1}ba).$$ 

The fundamental group of a knot can be also defined in a combinatorial way as follows: consider a diagram of the knot and a crossing in diagram, as in Fig.~\ref{App_Funda_combi}~(1) or~(2), where the incoming undercrossing arc is labeled $a$, the overcrossing arc is labeled $c$ and the outgoing arc is labeled $b.$ Then write a relation in the form $b = c^{-1}ac$  for each positive crossing, as in Fig.~\ref{App_Funda_combi}~(1), and a relation $b = cac^{-1}$ for each negative crossing, as in Fig.~\ref{App_Funda_combi}~(2). The combinatorial approach defines the fundamental group as the group having one generator for each arc and one relation at each crossing in the diagram as we just specified them. One can  show that this group is invariant under the Reidemeister moves. This means that all diagrams of the same knot have the same fundamental group.

This combinatorial description is equivalent to the Wirtinger presentation. Indeed, see for example the relation coming from the positive crossing of Fig.~\ref{App_Funda_combi}~(1) and the relation coming from homotopic loops in Fig.~\ref{App_Funda_crossinglabels}~(3) or~(4). However, as we will see in Section~\ref{surgeryonK}, for the purpose of doing surgery we need the topological approach,  so that we can express the longitude in terms of the generators of the fundamental group of  $S^3 \setminus N(K)$. For more details on combinatorial group theory, the reader is referred to~\cite{Sti} or~\cite{MagnusKarassSolitar}.

\subsubsection{Differentiating knots}\label{NtK}
We can use the fact that homeomorphic knots have the same fundamental group to show that two knots are different. As an example, we will show that the trefoil knot is different from the trivial knot. 

\begin{wrapfigure}{R}{0.35\textwidth}
\centering
\includegraphics[width=0.3\textwidth]{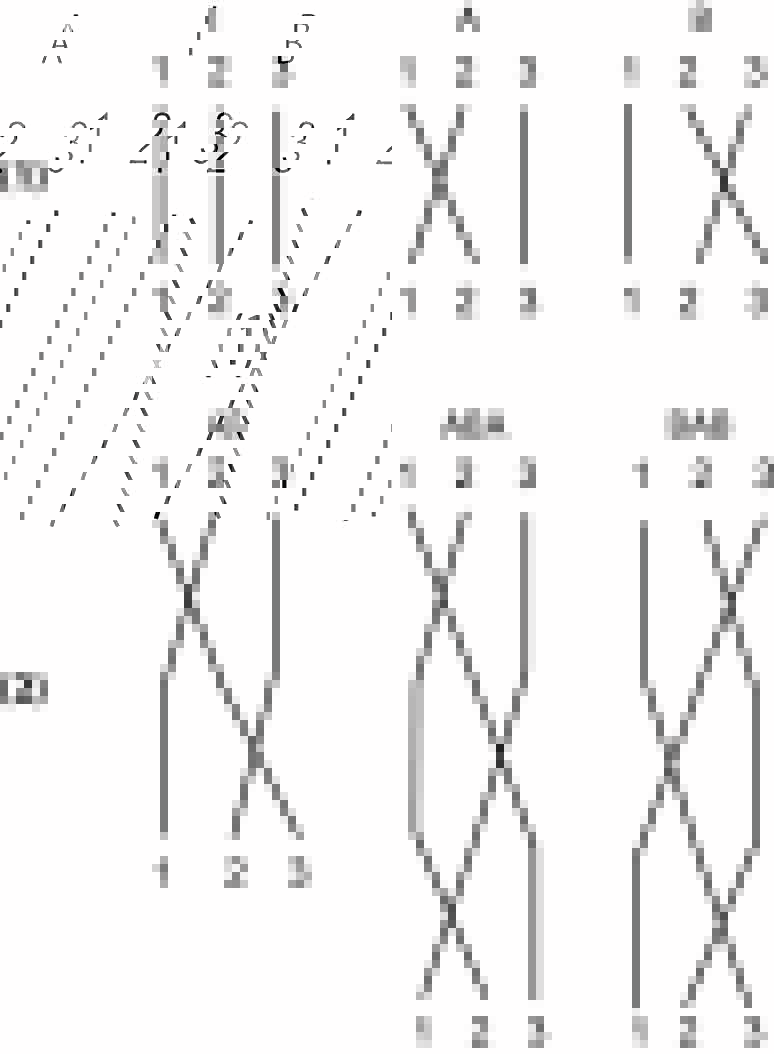}
\caption{\textbf{(1)} $R:\pi_1(T) \longrightarrow S_{3}$ \textbf{(2)} $ABA = BAB$ }
\label{App_Funda_permdiag}
\end{wrapfigure}

The trivial knot $U$, also called the unknot, is an embedding of $S^1$ as a geometrically round circle. Its diagram is a single arc $a$ with no crossings. Hence, its fundamental group $ \pi_1(S^3 \setminus N(U))$ has a single generator $a$ corresponding to arc $a$. This group is the infinite cyclic group $\mathbb Z$ which, as mentioned earlier, is also the fundamental group of the circle $S^1$. Hence, showing that the group of a knot is not homeomorphic to $\mathbb Z$ proves that this knot is not trivial.

Let us now compare the unknot to the trefoil knot. One can substitute $c = a^{-1}ba$ into the first two relations of $\pi_1(T)$ presented in previous section and see that the group of the trefoil has the simpler presentation $\pi_1(T) = (a,b| aba = bab)$. As we will show, this new presentation allows us to produce a surjective homeomorphism of $\pi_1(T)$ to the permutation group of three letters denoted by $S_{3}$. This proves that the trefoil knot is indeed knotted. Otherwise we would have a homeomorphism between commutative group $\mathbb Z$ and non-commutative group $S_{3}$.
 
To see the relation between $\pi_1(T)$ and $S_{3}$, consider permutations as represented by diagrams in Fig.~\ref{App_Funda_permdiag}~(1) where we indicate the mapping from a set $\{1,2,3\}$ to itself by drawing vertical arcs from a point to an image point. We compose two such diagrams by attaching the bottom row of one diagram to the top row of another. Two diagrams are equivalent if they represent the same permutation of the end points. In Fig.~\ref{App_Funda_permdiag}~(1) we have indicated a representation $R:\pi_1(T) \longrightarrow S_{3}$ of the trefoil group into the permutation group $\{1,2,3\}$ denoted by $S_{3}$, with $A= R(a)$ an interchange of the first two strands and $B = R(b)$ an interchange of the second two strands. In Fig.~\ref{App_Funda_permdiag}~(2), we also show the relationship $ABA = BAB$ that proves that this is indeed a representation of the trefoil fundamental group. 

In fact it is not hard to show that the permutation group $S_{3}$ has presentation $S_{3} = (\sigma_{1}, \sigma_{2} | \sigma_{1}\sigma_{2}\sigma_{1} = \sigma_{2}\sigma_{1}\sigma_{2},\sigma_{1}^{2} = 1, \sigma_{2}^{2} = 1)$ where $1$ denotes the identity element in the group. With $A=R(a) = \sigma_{1}$ and $B=R(b) = \sigma_{2}$, we see that $S_{3}$ is a quotient of $\pi_1(T)$ that is obtained by forcing $a$ and $b$ to have order $2$. In fact, one can prove that $a$ and $b$ have infinite order in $\pi_1(T)$ by a remarkable coincidence that the fundamental group of the trefoil knot is isomorphic to the braid group on three strands. We have not, in this paper, discussed the braid group and so we refer further information about this point to~\cite{PS}.

\subsubsection{Computing $\pi_1(\chi_{\mbox{\tiny K}}(S^3))$}\label{surgeryonK}
When the core curve $K$ of a non-trivial embedding $h(S_1^1 \times D^2)=N(K)$ is knotted, one cannot express $\lambda$ in terms of trivial longitudes and meridians, as was the case in Examples~\ref{ExFramedUnknot} and~\ref{ExFramedUnknot2}. In general, in order to compute the fundamental group of a $3$-manifold that is obtained by doing surgery on a blackboard framed knot $K$, we have to describe first how to write down a longitudinal element $\lambda$ in the fundamental group of the knot complement $S^3 \setminus N(K)$. 


\begin{wrapfigure}{L}{0.8\textwidth}  
\centering
\includegraphics[width=0.75\textwidth]{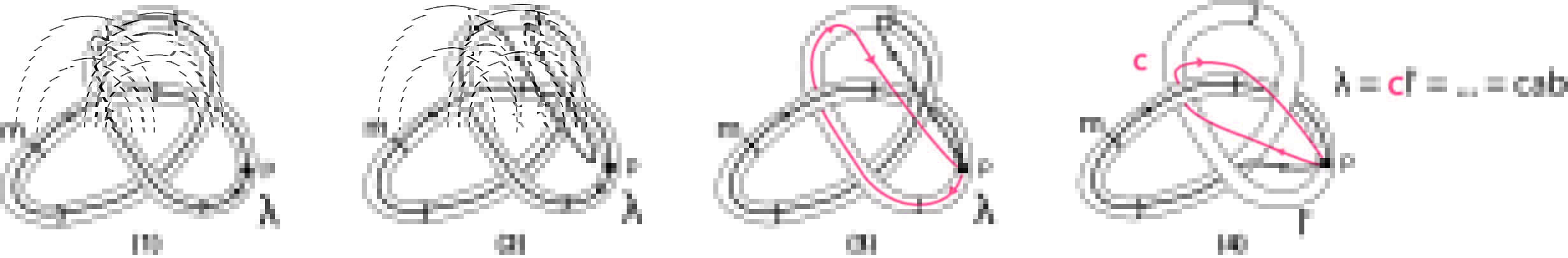}
\caption{\textbf{(1)} Longitude $\lambda$ in N(T) \textbf{(2)}\textbf{(3)}\textbf{(4)} Homotopy of $\lambda$ }
\label{App_Funda_solidtrefoil_Lambda}
\end{wrapfigure}

To do so, we homotope $\lambda$  to a product of the generators of $\pi_1(S^3 \setminus N(K))$ corresponding to the arcs that it underpasses. In this expression for the longitude, the elements $x$ that are passed underneath will appear either as $x$ or as $x^{-1}$   according to whether the knot is going to the right or to the left from the point of view of a traveler on the original longitude curve. Once the longitude $\lambda$ is expressed in terms of the generators of the fundamental group of  $S^3 \setminus N(K)$, we can calculate the fundamental group of $\chi_{\mbox{\tiny K}}(S^3)$ using Theorem~\ref{3d1long}.

\begin{wrapfigure}{R}{0.4\textwidth} 
\centering
\includegraphics[width=0.35\textwidth]{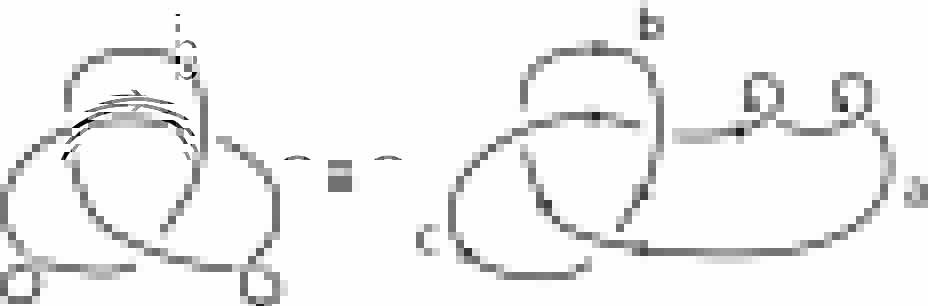}
\caption{Projection of the trefoil with total blackboard framing $1$}
\label{TrefoilP}
\end{wrapfigure}

For example, in Fig.~\ref{App_Funda_solidtrefoil_Lambda}~(1) we show a trefoil knot and the longitudinal element $\lambda$ in the fundamental group running parallel alongside it. Note that, for convenience, the basepoint $p$ is on the boundary of the torus but it could be anywhere in the complement $S^3 \setminus N(K)$. Each time that $\lambda$ goes under the knot we can run a line all the way back to the base point $p$ and then back to the point where $\lambda$ comes out from underneath the knot, see Fig.~\ref{App_Funda_solidtrefoil_Lambda}~(2),~(3) and~(4). By doing this, we have written, up to homotopy, the longitude as a product of the generators of the fundamental group that are passed under by the original longitude curve. Thus in the trefoil knot case, as shown in Fig.~\ref{App_Funda_solidtrefoil_Lambda}~(4), we see that the longitude is given by $\lambda= cab$.

\begin{exmp} \label{TrefoilSugeries}  \rm
We will now calculate the fundamental group of a $3$-manifold obtained by doing $3$-dimensional $1$-surgery on the trefoil knot  for two different projections. The first one is the simplest projection of the trefoil shown in Fig.~\ref{App_Funda_solidtrefoil_Lambda}~(1). It has three positive crossings yielding a blackboard framing number of $3$. The second one has two additional negative crossings thus having a blackboard framing number of $1$, see Fig.~\ref{TrefoilP}.


As mentioned in Section~\ref{Fundamen3d1}, surgery collapses the longitude $\lambda$, so the resulting fundamental group depends on how longitude $\lambda$ is expressed in the following relation: 

$$ \pi_1(\chi(S^3))=\frac{\pi_1(S^3\setminus N(T))}{<\lambda>} =\frac{\pi_1(T)}{<\lambda>}=(a,b,c \ | \ aba = bab, \lambda=1) $$

In the first case, by substituting $\lambda= cab$ and $c=a^{-1}ba$ to $\lambda=1$, we have $a^{-1}baab=1 \Leftrightarrow a=ba^{2}b$. Given that $aba = bab$, this implies that $a^2=baaba\Leftrightarrow a^2=babab \Leftrightarrow a^3=(ba)^3$. Notice now that  $(aba)^2=aba \cdot aba=bab \cdot bab=(ba)^3$. Thus by setting $A=a, B=ba$ and  $C=aba$ we have that  $A^3 = B^3=C^2$ and we only need to show that this is equal to $ABC$. Indeed, $ABC=a \cdot ba \cdot aba=(ba)^3$. Hence, the fundamental group of the  resulting manifold is isomorphic to the binary tetrahedral group $(A,B,C \ | \ A^3 = B^3=C^2=ABC)$ denoted $<3,3,2>$. It is also worth mentioning that the resulting manifold is isomorphic to $S^3/<3,3,2>$, the quotient of the $3$-sphere by an action of the binary tetrahedral group. For details on group actions the reader is  referred to~\cite{MilAct}. 


In the second case, the longitude $\lambda$ in the projection shown in Fig.~\ref{TrefoilP} is the same as the one in Fig.~\ref{App_Funda_solidtrefoil_Lambda}~(1) with two additional negative crossings along arc $a$. Hence, in this case $\lambda=caba^{-2}$. By substitution, we have $a^{-1}baaba^{-2}=1 \Leftrightarrow a^{3}=ba^{2}b$. Given that $aba = bab$, this implies that $a^3=baaba\Leftrightarrow a^4=babab \Leftrightarrow a^5=(ba)^3$.  Thus by setting $A=a, B=ba$ and  $C=aba$ we have that  $A^5 = B^3=C^2$ and   $ABC=a \cdot ba \cdot aba=(ba)^3$. The fundamental group of the  resulting manifold is isomorphic to the binary icosahedral group $(A,B,C \ | \ A^5 = B^3=C^2=ABC)$ denoted by $<5,3,2>$. The resulting manifold is isomorphic to $S^3/<5,3,2>$, the quotient of the $3$-sphere by an action of $<5,3,2>$. 
\end{exmp}

\begin{wrapfigure}{L}{0.25\textwidth} 
\centering
\includegraphics[width=0.2\textwidth]{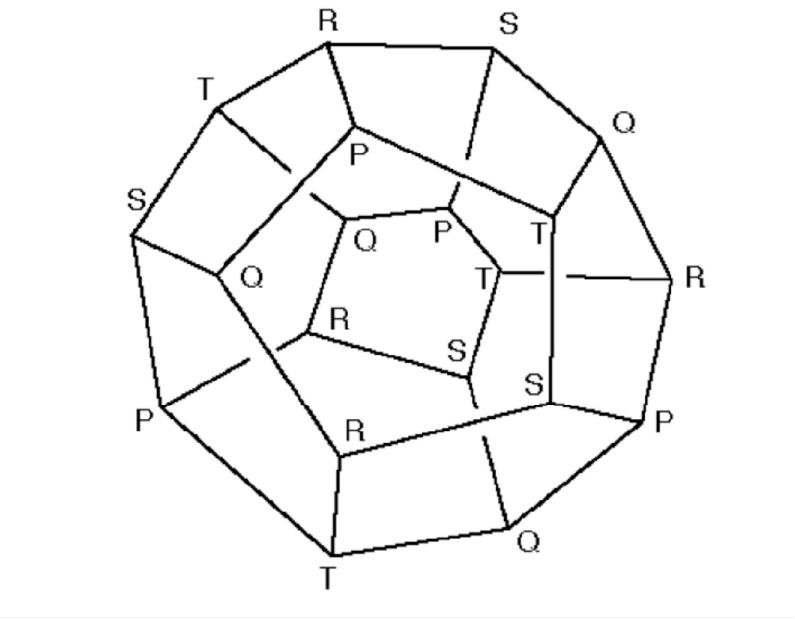}
\caption{Poincar\'{e} sphere}
\label{Poincare}
\end{wrapfigure}

This manifold is also known as the \textit{Poincar\'{e} homology sphere}, which can be described by identifying opposite faces of a dodecahedron according to the scheme shown in Fig.~\ref{Poincare} (for more details on this identification, see~\cite{SeTh}). It can be shown from this that the Poincar\'{e} homology sphere is diffeomorphic to the link of the variety $V((2,3,5))=\{ (z_1,z_2,z_3) \ | \ z_1{^2} + z_2{^3} + z_3{^5} = 0\}$, that is, the intersection of a small $5$-sphere around $0$ with $V((2,3,5))$. From this it is not hard to see that the Poincar\'{e} homology sphere can be also obtained as a $5$-fold cyclic branched covering of $S^3$ over the trefoil knot. For more details on the different descriptions of the Poincar\'{e} homology sphere, the reader is referred to~\cite{KirScha}.

This manifold has been of great interest, and is even thought by some physicists to be the shape of the geometric universe, see~\cite{Weeks,Luminet,Levin}. See also~\cite{SS4}.

\nolinenumbers

\newpage

\end{document}